\documentclass[10pt]{article}
\usepackage{amssymb}
\usepackage{amsmath}
\usepackage{fullpage}
\usepackage[T1]{fontenc}
%\usepackage[math]{anttor}

%-------------------------------------------
%
%   Thue-Morse Permutation Complexity
%
%              REVISED
%
%              UPLOAD
%
%-------------------------------------------

%\setlength{\parskip}{0.8ex}
\setlength{\parindent}{4.0ex}
\linespread{1.5}

\newcommand{\abs}[1]{\left| #1 \right|}

\newcommand{\w}[0]{\omega}
\newcommand{\N}[0]{\mathbb{N}}
\newcommand{\Z}[0]{\mathbb{Z}}

\newcommand{\scr}[1]{ \mathcal{ #1 } }
\long\def\symbolfootnote[#1]#2{\begingroup%
\def\thefootnote{\fnsymbol{footnote}}\footnote[#1]{#2}\endgroup} 
%
%---------------------------------------------------------------------------------------------
%This code creates the Theorem, Prop, Lemma, and Definition setup
%
\newtheorem{theorem}{Theorem}[section]
\newtheorem{lemma}[theorem]{Lemma}
\newtheorem{proposition}[theorem]{Proposition}
\newtheorem{corollary}[theorem]{Corollary}
\newtheorem{claim}[theorem]{Claim}

\newenvironment{proof}[1][Proof]{\begin{trivlist}
\item[\hskip \labelsep {\bfseries #1}]}{\end{trivlist}}
\newenvironment{definition}[1][Definition]{\begin{trivlist}
\item[\hskip \labelsep {\bfseries #1}]}{\end{trivlist}}
\newenvironment{example}[1][Example]{\begin{trivlist}
\item[\hskip \labelsep {\bfseries #1}]}{\end{trivlist}}

\newcommand{\qed}{\hfill \mbox{\raggedright \rule{.07in}{.1in}}}

%
%---------------------------------------------------------------------------------------------
%
\title{Permutation Complexity of the Thue-Morse Word}
\author{Steven Widmer\footnote{The Mathematics Institute, Reykjavik University, Menntavegi 1, IS-101 Reykjavik, ICELAND (s.widmer1@gmail.com).}}
\date{}
\begin{document}

\maketitle

%=========================================================================================================================
%=========================================================================================================================

\begin{abstract}
Given a countable set $X$ (usually taken to be $\N$ or $\Z$), an infinite permutation $\pi$ of $X$ is a linear ordering $\prec_\pi$ of $X$, introduced in $\cite{FlaFrid}$.  This paper investigates the combinatorial complexity of the infinite permutation on $\N$ associated with the well-known and well-studied Thue-Morse word.  A formula for the complexity is established by studying patterns in subpermutations and the action of the Thue-Morse morphism on the subpermutations.
$\vspace{2.0ex}$

$\textbf{Keywords:}$ infinite permutation, permutation complexity, Thue-Morse word
\end{abstract}

%=========================================================================================================================
%=========================================================================================================================
\section{Introduction}

%-----------------------------------------------
%Infinite permutation
%
Permutation complexity of aperiodic words is a relatively new notion of word complexity which was first introduced and studied by Makarov $\cite{Makar06}$ based on ideas of S.V. Avgustinovich (see the acknowledgements in $\cite{FlaFrid}$), and is based on the idea of an infinite permutation associated to an aperiodic word.  For an infinite aperiodic word $\w$, no two shifts of $\w$ are identical.  Thus, given a linear order on the symbols used to compose $\w$, no two shifts of $\w$ are equal lexicographically.  The infinite permutation associated with $\w$ is the linear order on $\N$ induced by the lexicographic order of the shifts of $\w$.  The permutation complexity of the word $\w$ will be the number of distinct subpermutations of a given length of the infinite permutation associated with $\w$.

Infinite permutations associated with infinite aperiodic words over a binary alphabet act fairly well-behaved, but many of the arguments used for binary words break down when used with words over more than two symbols.  Given a subpermutation of length $n$ of an infinite permutation associated with a binary word, a portion of length $n-1$ of the word can be recovered from the subpermutation.  This is not always the case for subpermutations associated with words over 3 or more symbols.  For example, consider the permutation $(1 \hspace{0.5ex} 2 \hspace{0.5ex} 3)$.  If this permutation is associated with a binary word over $\{0,1 \}$, with $0<1$, it could only correspond to the word $00$.  On the other hand, if this permutation is associated with a word over 3 symbols, suppose $\{0,1,2 \}$ with $0<1<2$, then the permutation could be associated with any of $00$, $01$, $11$, or $12$.

For binary words the subpermutations depend on the order on the symbols used to compose $\w$, but the permutation complexity does not depend on the order.  For words over 3 or more symbols, not only do the subpermutations depend on the order on the alphabet but so does the permutation complexity.  For example, consider the Fibonacci word
$$t = 0100101001001010010100100101\ldots,$$
defined by iterating the morphism $0 \mapsto 01, 1 \mapsto 0$ on the letter $0$, and suppose the 1s are replaced by alternating $a$'s and $b$'s to create the word:
$$\hat{t} = 0a00b0a00b00a0b00a0b00a00b0a\ldots.$$
If the symbols in $\hat{t}$ are ordered $0<a<b$ there will be 5 distinct subpermutations of length 3, and if the symbols are ordered $a<0<b$ there will be only 4 distinct subpermutations of length 3.  The verification of this fact is left to the reader.

In view of the notion of an infinite permutation associated to an aperiodic word, it is natural to compute the permutation complexity of well-known classes of words.  In $\cite{Makar09}$, Makarov computes the permutation complexity of Sturmian words.  The goal of this paper is to determine the permutation complexity of the Thue-Morse word.

%-----------------------------------------------
%Thue-Morse word
%
The Thue-Morse word, $T = T_0T_1T_2 \cdots$, is:
$$ T = 0110 1001 1001 0110 1001 0110 0110 1001 \cdots,$$
which can be generated by the morphism:
$$\mu_T:0 \mapsto 01, \hspace{1.5ex} 1 \mapsto 10, $$
by iterating on the letter $0$.  Axel Thue introduced this word in his studies of repetitions in words, and proved that the word $T$ is overlap-free ($\cite{Thue12}$).  A word $\w$ is said to be $\textit{overlap-free}$ if it does not contain a factor of the form $vuvuv$ for words $u$ and $v$, with $v$ non-empty.

The Thue-Morse word was again discovered independently by Marston Morse in 1921 $\cite{Morse21}$ through his study of differential geometry, and used in the foundations of symbolic dynamics.  For a more in depth look at further properties, independent discoveries, and applications of the Thue-Morse word see $\cite{AllSha99}$.

The factor complexity of the Thue-Morse word was computed independently by two groups in 1989, Brlek $\cite{Brlek89}$ and de Luca and Varricchio $\cite{LucaVarr89}$.  Our proof of the permutation complexity of the Thue-Morse word does not use the factor complexity function.

The permutation complexity of the Thue-Morse word can be found as follows.  For any $n \geq 2$, we can write $n$ as $n = 2^a + b$, with $0 < b \leq 2^a$.  Using this notation, it will shown that the formula for the permutation complexity of T, initially conjectured by M. Makarov, is
$$ \tau_T(n) = 2( 2^{a+1} + b - 2 ). $$

%-----------------------------------------------
%What is in the paper
%
We give a a non-trivial proof of this formula here.  We start with some basic notation and definitions.  Some properties of infinite permutations are given in Section $\ref{GeneralPermResults}$.  The infinite permutation associated with the Thue-Morse word, $\pi_T$, is introduced in Section $\ref{ThueMorsePermutation}$.  Patterns found in the subpermutations of $\pi_T$ are studied in Section $\ref{TypeKandCompPairs}$, while Section $\ref{SecTypeOnePairs}$ investigates when a specific pattern occurs.  The formula for the permutation complexity is established in Section $\ref{FormulaForPermComp}$.  Low order subpermutations are listed in Appendix $\ref{SecTheSubperms}$ to be used as a base case for induction arguments.

%---------------------------------------------------
% Definition of word
\subsection{Words}
A $\textit{word}$ is a finite, (right) infinite, or bi-infinite sequence of symbols taken from a finite non-empty set, $A$, called an $\textit{alphabet}$.  The standard operation on words is concatenation, and is represented by juxtaposition of letters and words.  A $\textit{finite word}$ over $A$ is a word of the form $u = a_1 a_2 \ldots a_n$ with $n \geq 0$ (if $n=0$ we say $u$ is the $\textit{empty word}$, denoted $\epsilon$) and each $a_i \in A$; the $\textit{length}$ of the word $u$ is the number of symbols in the sequence and is denoted by $\abs{u} = n$.  For $a \in A$, let $\abs{u}_a$ denote the number of occurrences of the letter $a$ in the word $u$.  The set of all finite words over the alphabet $A$ is denoted by $A^*$, and is a free monoid with concatenation of words as the operation.  

%---------------------------------------------------
%Infinite words
%
A $\textit{(right) infinite word}$ over $A$ is a word of the form $\w = \w_0 \w_1 \w_2 \ldots$ with each $\w_i \in A$, and the set of all infinite words over $A$ is denoted $A^\N$.  Given $\w \in A^* \cup A^\N$, any word of the form $u=\w_i\w_{i+1} \ldots \w_{i+n-1}$, with $i \geq 0$, is called a $\textit{factor}$ of $\w$ of length $n \geq 1$.  The set of all factors of a word $\w$ is denoted by $\scr{F}(\w)$.  The set of all factors of length $n$ of $\w$  is denoted $\scr{F}_\w(n)$, and let $\rho_\w(n) = \abs{\scr{F}_\w (n)}$.  The function $\rho_\w: \N \rightarrow \N $ is called the $\textit{factor complexity function}$, or $\textit{subword complexity function}$, of $\w$ and it counts the number of factors of length $n$ of $\w$.  For a natural number $i$ we denote by $\w[i] = \w_i\w_{i+1}\w_{i+2}\w_{i+3}\ldots$ the $i$$\textit{-letter shift of}$ $\w$.  For natural numbers $i \leq j$, $\w[i,j] = \w_i\w_{i+1}\w_{i+2} \ldots \w_j$ denotes the factor of length $j-i+1$ starting at position $i$ in $\w$.

For words $u \in A^*$ and $v \in A^* \cup A^\N$ where $\w = uv$, we call $u$ a $\textit{prefix}$ of $\w$ and $v$ a $\textit{suffix}$ of $\w$.  A word $\w$ is said to be $\textit{periodic}$ of period $p$ if for each $i \in \N$, $\w_i = \w_{i+p}$, and $\w$ is said to be $\textit{eventually periodic}$ of period $p$ if there exists an $N \in \N$ so that for each $i > N$, $\w_i = \w_{i+p}$; or equivalently, $\w$ has a periodic suffix.  A word $\w$ is said to be $\textit{aperiodic}$ if it is not periodic or eventually periodic.

%---------------------------------------------------
%Morphism
%
Let $A$ and $B$ be two finite alphabets.  A map $\varphi: A^* \rightarrow B^*$ so that $\varphi(uv) = \varphi(u)\varphi(v)$ for any $u,v \in A^*$ is called a $\textit{morphism}$ of $A^*$ into $B^*$, and $\varphi$ is defined by the image of each letter in $A$.  A morphism on $A$ is a morphism from $A^*$ into $A^*$, also called an $\textit{endomorphism}$ of $A$.  A morphism $\varphi$ is said to be $\textit{non-erasing}$ if the image of any non-empty word is not empty.

The action of a morphism $\varphi$ on $A$ can naturally be extended from $A^*$ to $A^\N$.  For any $\w = \w_0\w_1\w_2\ldots \in A^\N$, we define $\varphi(\w) = \varphi(\w_0)\varphi(\w_1)\varphi(w_2)\ldots$ as in the case for words in $A^*$.  We say that a word $\w$ is a $\textit{fixed point}$ of the morphism $\varphi$ if $\varphi(\w) = \w$.  If $\varphi$ is a morphism on A and if $\varphi(a) = au$ for some $a \in A$ and non-empty $u \in A^*$, $\varphi$ is said to be $\textit{prolongable}$ on $a$.  If $\varphi$ is a morphism on $A$ that is prolongable on some $a \in A$, then $\varphi^n(a)$ is a proper prefix of $\varphi^{n+1}(a)$ for each $n \in \N$.  The limit of the sequence $\left\{ \varphi^n(a) \right\}_{n \in \N}$ will be the unique infinite word
$$ \w = \lim_{n \rightarrow \infty} \varphi^n(a) = \varphi^\infty(a) = au\varphi(u)\varphi^2(u) \cdots $$
where $\w$ is a fixed point of $\varphi$, and we say that $\w$ is $\textit{generated}$ by $\varphi$.

%---------------------------------------------------
%  Definition of Permutation
\subsection{Permutations on words}
The idea of an infinite permutation that will be here used was introduced in $\cite{FlaFrid}$.  This paper will be dealing with permutation complexity of infinite words so the set used in the following definition will be $\N$ rather than an arbitrary countable set.  To define an $\textit{infinite permutation}$ $\pi$, start with a linear order $\prec_\pi$ on $\N$, together with the usual order $<$ on $\N$.  To be more specific, an infinite permutation is the ordered triple $\pi = \left\langle \N,\prec_\pi,<  \right\rangle$, where $\prec_\pi$ and $<$ are linear orders on $\N$.  The notation to be used here will be $\pi(i) < \pi(j)$ rather than $i \prec_\pi j.$

Given an infinite aperiodic word $\w = \w_0\w_1\w_2 \ldots$ on an alphabet $A$, fix a linear order on $A$.  We will use the binary alphabet $A = \{0, 1\}$ and use the natural ordering $0<1$.  Once a linear order is set on the alphabet, we can then define an order on the natural numbers based on the lexicographic order of shifts of $\w$.  Considering two shifts of $\w$ with $a \neq b$, $\w[a] = \w_a\w_{a+1}\w_{a+2} \ldots$ and $\w[b] = \w_b\w_{b+1}\w_{b+2} \ldots$, we know that $\w[a] \neq \w[b]$ since $\w$ is aperiodic.  Thus there exists some minimal number $c \geq 0$ so that $\w_{a+c} \neq \w_{b+c}$ and for each $0 \leq i < c$ we have $\w_{a+i} = \w_{b+i}$.  We call $\pi_\w$ the infinite permutation associated with $\w$ and say that $\pi_\w(a) < \pi_\w(b)$ if $\w_{a+c} < \w_{b+c}$, else we say that $\pi_\w(b) < \pi_\w(a)$.  

For natural numbers $a \leq b$ consider the factor $\w[a, b] = \w_a\w_{a+1} \ldots \w_b$ of $\w$ of length $b - a + 1$.  Denote the finite permutation of $\{ 1, 2, \ldots , b - a + 1 \}$ corresponding to the linear order by $\pi_\w[a,b]$.  That is $\pi_\w[a,b]$ is the permutation of $\{ 1, 2, \ldots , b - a + 1 \}$ so that for each $0 \leq i,j \leq (b - a)$, $ \pi_\w[a,b](i) < \pi_\w[a,b](j)$ if and only if $\pi_\w(a + i) < \pi_\w(a + j)$.  Say that $p = p_0p_1 \cdots p_n$ is a $\textit{(finite) subpermutation}$ of $\pi_\w$ if $p = \pi_\w[a,a+n]$ for some $a,n \geq 0$.  For the subpermutation $p = \pi_\w[a,a+n]$ of $\{1, 2, \cdots, n+1 \}$, we say the $\textit{length}$ of $p$ is $n+1$.

Denote the set of all subpermutations of $\pi_\w$ by $Perm_{\pi_\w}$, and for each positive integer $n$ let 
$$Perm_{\pi_\w}(n) = \{ \hspace{1.0ex} \pi_\w[i,i+n-1] \hspace{1.0ex} \left| \hspace{1.0ex} i \geq 0 \right. \hspace{1.0ex} \}$$
denote the set of distinct finite subpermutations of $\pi_\w$ of length $n$.  The $\textit{permutation complexity function}$ of $\w$ is defined as the total number of distinct subpermutations of $\pi_\w$ of a length $n$, denoted $\tau_\w(n) = \abs{Perm_{\pi_\w}(n)}$.

\begin{example}  
Let's consider the well-known Fibonacci word, 
$$t = 0100101001001010010100100101\ldots,$$
with the alphabet $A = \{0,1 \}$ ordered as $0 < 1$.  We can see that $t[2] = 001010\ldots $ is lexicographically less than $t[1] = 100101\ldots$, and thus $\pi_t(2) < \pi_t(1)$.

Then for a subpermutation, consider the factor $t[3,5] = 010$.  We see that $\pi_t[3,5] = (231)$ because in lexicographic order if we have $\pi_t(5) < \pi_t(3) < \pi_t(4)$.
\end{example}

%=========================================================================================================================
%=========================================================================================================================
\section{Some General Permutation Results}
\label{GeneralPermResults}

Initially work has been done with infinite binary words (see $\cite{Makar06,FlaFrid,Makar09,Makar09TM,Makar10,AvgFriKamSal}$).  Suppose $\w = \w_0\w_1\w_2\ldots$ is an aperiodic infinite word over the alphabet $A=\{ 0,1 \}$.  First let's look at some remarks about permutations generated by binary words where we use the natural order on $A$.
%--------------
\begin{claim}
\emph{($\cite{Makar06}$)}
\label{PCClaim01}
For an infinite aperiodic word $\w$ over $A = \{ 0, 1 \}$ with the natural ordering we have:

(1) $\pi_\w(i) < \pi_\w(i+1)$ if and only if $\w_i = 0$.

(2) $\pi_\w(i) > \pi_\w(i+1)$ if and only if $\w_i = 1$.

(3) If $\w_i = \w_j$, then $\pi_\w(i) < \pi_\w(j)$ if and only if $\pi_\w(i+1) < \pi_\w(j+1)$
\end{claim}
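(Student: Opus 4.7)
The plan is to reduce all three parts to the lexicographic definition of $\pi_\w$ and exploit aperiodicity to locate the first position at which the relevant shifts of $\w$ disagree. Throughout, I would write $\w[i]_k$ for $\w_{i+k}$ so that a comparison $\w[i]$ versus $\w[j]$ becomes a comparison of two $0/1$ sequences, and use the fact that the alphabet has only two letters, so differing at a position means one shift has $0$ there and the other has $1$.

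For (1) and (2) I would fix $i$ and let $c \ge 0$ be the least index with $\w_{i+c} \ne \w_{i+1+c}$; such a $c$ exists by aperiodicity. By minimality, $\w_{i+j} = \w_{i+1+j}$ for all $0 \le j < c$, which telescopes to $\w_i = \w_{i+1} = \cdots = \w_{i+c}$. In particular $\w_{i+c} = \w_i$, and since the alphabet is binary and $\w_{i+c} \ne \w_{i+c+1}$, we get $\w_{i+c+1} = 1 - \w_i$. Since the shifts $\w[i]$ and $\w[i+1]$ agree on positions $0,\dots,c-1$ and differ at position $c$ with $\w[i]_c = \w_i$ and $\w[i+1]_c = 1-\w_i$, the case $\w_i = 0$ gives $\w[i] <_{\mathrm{lex}} \w[i+1]$, hence $\pi_\w(i) < \pi_\w(i+1)$, and the case $\w_i = 1$ gives the reverse; the converses follow from the fact that the two cases partition the possibilities for $\w_i$ and yield opposite orderings, so equivalences hold. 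This simultaneously handles (1) and (2).

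For (3), assume $\w_i = \w_j$ and let $c \ge 0$ be the least index with $\w_{i+c} \ne \w_{j+c}$ (which exists by aperiodicity since shifts of an aperiodic word are distinct). Because position $0$ agrees by hypothesis, $c \ge 1$. The key observation is the index shift $\w[i+1]_k = \w[i]_{k+1}$: the pair of shifts $\w[i+1], \w[j+1]$ agrees on positions $0,\dots,c-2$ and disagrees at position $c-1$ with the \emph{same} two symbols $\w_{i+c}, \w_{j+c}$ that distinguished $\w[i]$ from $\w[j]$ at position $c$. Therefore $\w[i] <_{\mathrm{lex}} \w[j]$ if and only if $\w[i+1] <_{\mathrm{lex}} \w[j+1]$, which is exactly the equivalence $\pi_\w(i) < \pi_\w(j) \iff \pi_\w(i+1) < \pi_\w(j+1)$.

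There is really no hard step here; the only care needed is to observe that aperiodicity guarantees the existence of the first-disagreement index in each comparison and that the binary alphabet forces the disagreeing symbols to be one $0$ and one $1$. I would make sure to state the binary hypothesis prominently, since part (3) would still hold over larger alphabets but parts (1)--(2) genuinely use that $\w_{i+c+1}$ is determined by $\w_{i+c}$ once they are known to be unequal.
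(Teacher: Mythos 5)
The paper does not prove this claim at all: it is quoted from Makarov \cite{Makar06} and used as a known fact, so there is no internal proof to compare against. Your argument is correct and is the standard one — for (1)–(2) the minimality of the first disagreement index $c$ telescopes to $\omega_i=\omega_{i+1}=\cdots=\omega_{i+c}\neq\omega_{i+c+1}$, which over a binary alphabet pins down the lexicographic comparison of $\omega[i]$ and $\omega[i+1]$, and for (3) the shift-by-one observation shows $\omega[i+1],\omega[j+1]$ first disagree at position $c-1$ with the same pair of letters, so the comparisons agree (the degenerate case $i=j$ being trivial).
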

%--------------
%--------------
\begin{lemma}
\emph{($\cite{Makar06}$)}
\label{PermComp01}
Given two infinite binary words u = $u_0u_1\ldots$ and $v=v_0v_1 \ldots$ with $\pi_u[0, n+1] = \pi_v[0, n+1]$, it follows that $u[0,n] = v[0,n]$.
\end{lemma}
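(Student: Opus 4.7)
The plan is to exploit Claim \ref{PCClaim01}, which for a binary word with the natural order turns each consecutive comparison $\pi_w(i) \lessgtr \pi_w(i+1)$ into the value of the letter $w_i$. The hypothesis $\pi_u[0,n+1] = \pi_v[0,n+1]$ is a statement about equality of finite permutations of $\{1,2,\ldots,n+2\}$, which in particular forces the $n+1$ adjacent comparisons to agree between the two words. Translating each agreement back into a letter via the claim will identify $u_i$ with $v_i$ for every $0 \le i \le n$.

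First I would unwind the definition of $\pi_w[0,n+1]$: for each $0 \le i \le n$ we have $\pi_w[0,n+1](i) < \pi_w[0,n+1](i+1)$ if and only if $\pi_w(i) < \pi_w(i+1)$, by the defining property that the finite subpermutation preserves the relative order of the infinite permutation restricted to the chosen window. Thus the equality of the two subpermutations immediately yields, for every $0 \le i \le n$, the equivalence
\[
\pi_u(i) < \pi_u(i+1) \iff \pi_v(i) < \pi_v(i+1).
\]

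Next I would apply parts (1) and (2) of Claim \ref{PCClaim01} to both $u$ and $v$. Since $u_i = 0$ exactly when $\pi_u(i) < \pi_u(i+1)$ and similarly for $v$, the equivalence above gives $u_i = 0 \iff v_i = 0$; because the alphabet is binary, this forces $u_i = v_i$. Letting $i$ range over $0, 1, \ldots, n$ produces $u[0,n] = v[0,n]$, as claimed.

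There is no real obstacle here beyond bookkeeping: the heavy lifting has already been done by Claim \ref{PCClaim01}, and the argument amounts to noting that equal subpermutations have equal adjacent comparisons and then reading off the letters. (It is worth flagging, for contrast with the non-binary setting discussed in the introduction, that this argument is specifically binary: the equivalence ``$u_i = 0$ or $u_i = 1$'' has no analogue once the alphabet has three or more letters, which is why an analogous recovery statement fails for words over $\{0,1,2\}$.)
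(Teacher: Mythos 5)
Your proof is correct: the paper itself states Lemma \ref{PermComp01} as a cited result from Makarov without reproving it, and your argument—reading off each letter $u_i$, $v_i$ from the adjacent comparison $\pi_w(i)$ versus $\pi_w(i+1)$ via parts (1) and (2) of Claim \ref{PCClaim01}, after noting that equal subpermutations force equal adjacent comparisons—is exactly the standard derivation the citation points to. Nothing is missing; your closing remark about why this recovery fails over larger alphabets matches the discussion in the paper's introduction.
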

%--------------
We do have a trivial upper bound for $\tau_\w(n)$ being the number of permutations of length $n$, which is $n!$.  Lemma $\ref{PermComp01}$ directly implies a lower bound for the permutation complexity for a binary aperiodic word $\w$, namely the factor complexity of $\w$.  Thus, initial bounds on the permutation complexity can be seen to be:
$$ \rho_\w(n-1) \leq \tau_\w(n) \leq  n!$$

For $a \in A = \{0,1\}$, let $\bar{a}$ denote the $\textit{complement}$ of $a$, that is $\bar{0} = 1$ and $\bar{1} = 0$.  If $u = u_1u_2u_3 \cdots$ is a word over $A$, the $\textit{complement}$ of $u$ is defined to be the word composed of the complement of the letters in $u$, that is $\bar{u} = \bar{u}_1\bar{u}_2\bar{u}_3 \cdots$.  Let $\w$ be an infinite aperiodic binary word, we say the set of factors of $\w$ is $\textit{closed under complementation}$ if for each $u \in \scr{F}(\w)$ then $\bar{u} \in \scr{F}(\w)$.  The following lemma shows an interesting property of the subpermutations of the infinite permutation $\pi_\w$.
%--------------
\begin{lemma}
\label{ClosedUnderCompliment}
Let $\w = \w_0\w_1\w_2\cdots$ be an infinite aperiodic binary word with factors closed under complementation.  If $p$ is a subpermutation of $\pi_\w$ of length $n$, then the subpermutation $q$ defined by $q_i = n-p_i +1$ for each $i$, is also a subpermutation of $\pi_\w$ of length $n$.
\end{lemma}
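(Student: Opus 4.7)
The plan is to reduce the question to a pairwise comparison argument and then exploit closure under complementation. The first observation is a finite determinacy fact: every subpermutation of length $n$ is completely determined by a sufficiently long factor of $\w$. Concretely, given $p = \pi_\w[a, a+n-1]$, the permutation $p$ is specified by the finitely many pairwise comparisons of $\pi_\w(a+i)$ with $\pi_\w(a+j)$ for $0 \leq i < j \leq n-1$. Since $\w$ is aperiodic, for each such pair there is a least $c_{ij} \geq 0$ with $\w_{a+i+c_{ij}} \neq \w_{a+j+c_{ij}}$. Setting $M = \max_{i < j}(j + c_{ij})$, every such comparison can be read off from the single factor $u = \w[a, a+M]$.

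The second step uses the hypothesis. Because $\scr{F}(\w)$ is closed under complementation, $\bar{u} \in \scr{F}(\w)$, so there exists $b \geq 0$ with $\w[b, b+M] = \bar{u}$, i.e.\ $\w_{b+k} = \overline{\w_{a+k}}$ for all $0 \leq k \leq M$. I would then check that $\pi_\w[b, b+n-1] = q$ by verifying that each pairwise comparison flips. For $0 \leq i < j \leq n-1$, the shifts $\w[b+i]$ and $\w[b+j]$ agree on exactly the same initial segment within the window as $\w[a+i]$ and $\w[a+j]$ do, so their first disagreement is again at position $c_{ij}$; at that position the $0$/$1$ values are swapped, so $\pi_\w(b+i) < \pi_\w(b+j)$ if and only if $\pi_\w(a+i) > \pi_\w(a+j)$. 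This is exactly the relation needed for the subpermutation at $b$ to be $q$.

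The only point requiring care, and it is mostly bookkeeping, is choosing $M$ large enough that every pairwise comparison is resolved inside the window $[a, a+M]$ so that complementing the window faithfully reverses each comparison. Once that check is in place the conclusion is immediate, and the argument uses no property of $\w$ beyond aperiodicity and the complementation-closed factor set hypothesized in the lemma.
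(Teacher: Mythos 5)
Your proposal is correct and follows essentially the same route as the paper: resolve every pairwise comparison inside a sufficiently long window starting at $a$, use closure under complementation to find $b$ where the complemented window occurs, and observe that every comparison flips, forcing the subpermutation at $b$ to be $q_i = n - p_i + 1$. The paper phrases the window as a prefix $v$ of $\w[a]$ containing each $u_{i,j}0$ and $u_{i,j}1$ and then does the explicit counting that reversing all comparisons gives $q_i = n - p_i + 1$, but these are only presentational differences from your $M = \max_{i<j}(j + c_{ij})$ bookkeeping.
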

%--------------
\begin{proof}
Let $p$ be a subpermutation of $\pi_\w$.  There is an $a \in \N$ so that $p = \pi_\w[a,a+n-1]$.  For each $i,j \in \{0,1, \ldots, n-1 \}$, if $p_i < p_j$ then $\w[a+i] < \w[a+j]$ and there is some finite word $u_{i,j}$ so that 
\begin{align*}
\w[a+i] &= u_{i,j}0\cdots \\
\w[a+j] &= u_{i,j}1\cdots 
\end{align*}

Let $v$ be the prefix of $\w[a]$ so that for each $i,j \in \{ 0,1, \ldots, n-1 \}$, $v$ contains both $u_{i,j}0$ and $u_{i,j}1$.  Since the set of factors of $\w$ is closed under complementation, $\bar{v}$ is a factor of $\w$.  There is a $b$ so that $\bar{v}$ is a prefix of $\w[b]$, and let $q = \pi_\w[b,b+n-1]$.  For each $i,j \in \{0, 1, \ldots, n-1 \}$, if $p_i < p_j$ 
\begin{align*}
\w[b+i] &= \bar{u}_{i,j}1\cdots \\
\w[b+j] &= \bar{u}_{i,j}0\cdots
\end{align*}
and thus, $q_i > q_j$.

For any $i \in \{0,1,  \ldots, n-1 \}$ there are $p_i - 1$ many $j$ so that $p_j < p_i$ and there are $n - p_i$ many $j$ so that $p_j > p_i$.  Therefore there are $n - p_i$ many $j$ so that $q_j < q_i$, so $q_i = n - p_i + 1$.
$\qed$
\end{proof}
%--------------

% - Same Form Defn
\begin{definition}
\label{SameForm}
Two permutations $p$ and $q$ of $\{1, 2, \ldots, n  \}$ have the $\textit{same form}$ if for each $i = 0, 1, \ldots, n-1$, $p_i < p_{i+1}$ if and only if $q_i < q_{i+1}$.  For a binary word $u$ of length $n-1$, say that $p$ $\textit{has form u}$ if 
$$p_i<p_{i+1} \Longleftrightarrow u_i = 0$$
for each $i = 0, 1, \ldots, n-2$.
\end{definition}

%=========================================================================================================================
%=========================================================================================================================
\section{The Thue-Morse Permutation}
\label{ThueMorsePermutation}

In this section the action of the Thue-Morse morphism on the subpermutations of $\pi_T$ will be investigated.  This action will induce a well-defined map on the subpermutations of $\pi_T$ and lead to an initial upper-bound on the permutation  complexity of $T$.

The Thue-Morse word is:
$$ T = 0110 1001 1001 0110 1001 0110 0110 1001 \cdots,$$
and the Thue-Morse morphism is:
$$\mu_T:0 \rightarrow 01, \hspace{1.5ex} 1 \rightarrow 10. $$
It can readily be verified that if $a$ is a natural number then 
$$\mu_T(T[a]) = T[2a]$$
since for any letter $x \in \{0,1 \}$, $\abs{\mu_T(x)}=2$.  

A nice property of the factors of $T$ is that any factor of length 5 or greater contains either $00$ or $11$.  Another interesting property is that for any $i \in \N$, $T[2i,2i+1]$ will be either 01 or 10.  Thus any occurrence of $00$ or $11$ must be a factor of the form $T[2i+1,2i+2]$ for some $i \in \N$.  Therefore any factors $T[2i,2i+n]$ and $T[2j+1,2j+1+n]$ where $n \geq 4$ cannot be equal based on the location of the factors $00$ or $11$.

Let $\pi_T$ be the infinite permutation associated to the Thue-Morse word $T$.  For notational purposes, the set of all subpermutations of $\pi_T$ of length $n$ will be denoted as $Perm(n)$.  

%---------------------------------
%How long the factor needs to be to determine order
%---------------------------------

Let $a$ and $n$ be natural numbers and suppose we want to determine if $T[a] < T[a+n]$.  There will be some (possibly empty) factor $u$ of $T$, and suffixes $x$ and $y$ of $T$ so that $T[a] = u\lambda x$ and $T[a+n] = u \bar{\lambda}y$, for $\lambda \in \{0,1 \}$.  If $\abs{u} \geq n+1$ we would have $T_{a+i} = T_{a+n+i}$ for each $i = 0, 1, \ldots, n$, and thus $T[a,a+n] = T[a+n,a+2n]$, and $T[a,a+2n]$ would violate the fact that $T$ is overlap-free.  Thus $\abs{u} \leq n$, and if $\abs{u} = n$ we have $T[a,a+n-1] = T[a+n,a+2n-1]$ and $T_{2n} = \overline{T_a}$.  Therefore the subpermutation $\pi_T[a,a+n]$ can be determined within the factor $T[a,a+2n]$ of length $2n+1$.  Thus the trivial bounds for the permutation complexity of the Thue-Morse word $T$ are
$$ \rho_T(n-1) \leq \tau_T(n) \leq \rho_T(2n-1). $$

Since the factor complexity of the Thue-Morse word is known (see $\cite{Brlek89,LucaVarr89}$) we can find all factors of a given length.  Thus for any natural number $n$, all factors of $T$ of length $2n-1$ can be identified and thus the set of all subpermutations of $\pi_T$ of length $n$, $Perm(n)$, can be identified as well.  The subpermutations of $\{1, 2, \ldots, n\}$ have been identified for relatively low $n$ (up to $n=65$) and in these cases no more than two subpermutations of any length were identified to have the same form.  In other words, for any factor $u$ of $T$ of length $n \leq 64$ there are at most two subpermutations of length $n+1$ having form $u$.  

This section will deal with some properties of $\pi_T$.  Something to note about the Thue-Morse morphism is that it is an order preserving morphism, as shown by the following lemma.
%--------------
\begin{lemma}
\label{OrderPresMorph}
For natural numbers $a$ and $b$, $T[a] < T[b]$ if and only if $\mu_T(T[a]) < \mu_T(T[b])$.
\end{lemma}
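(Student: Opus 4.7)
The plan is to use the fact that $\mu_T$ is a uniform morphism of length $2$ that maps each letter to a two-letter word beginning with that letter, together with the lexicographic nature of the order. Concretely, writing $T[a] < T[b]$ means there is a minimal $c \geq 0$ with $T_{a+i} = T_{b+i}$ for $i < c$ and $T_{a+c} = 0$, $T_{b+c} = 1$. Applying $\mu_T$ letter-by-letter, the first $c$ letters of $T[a]$ and $T[b]$ produce identical prefixes of length $2c$ in $\mu_T(T[a])$ and $\mu_T(T[b])$, and then $\mu_T(T_{a+c}) = 01$ while $\mu_T(T_{b+c}) = 10$, so the first position of disagreement between $\mu_T(T[a])$ and $\mu_T(T[b])$ is position $2c$, at which $\mu_T(T[a])$ has a $0$ and $\mu_T(T[b])$ has a $1$. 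Hence $\mu_T(T[a]) < \mu_T(T[b])$.

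For the converse, I would argue by contradiction (or equivalently, by contrapositive symmetry). Since $T$ is aperiodic, $T[a] \neq T[b]$ whenever $a \neq b$, so the order $<$ is a strict trichotomy: either $T[a] < T[b]$ or $T[a] > T[b]$. The case $a = b$ is trivial. Assume $\mu_T(T[a]) < \mu_T(T[b])$ but $T[a] > T[b]$; by the forward implication already proved, this gives $\mu_T(T[a]) > \mu_T(T[b])$, contradicting the assumption.

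The only step that requires a moment of care is verifying that the matching prefixes really do line up, i.e. that $\mu_T$ applied letterwise to the first $c$ agreeing letters yields exactly the first $2c$ letters of $\mu_T(T[a])$ and $\mu_T(T[b])$. This is immediate because $\mu_T$ has constant length $2$ and is applied letter-by-letter to an infinite word, so positions $0, 1, \ldots, c-1$ in $T[a]$ contribute exactly to positions $0, 1, \ldots, 2c-1$ in $\mu_T(T[a])$. There is no real obstacle here; the lemma is essentially a direct consequence of the uniform, length-$2$, ``first-letter preserving'' structure of the Thue--Morse morphism, and it will be used later to transport order relations between subpermutations under the action of $\mu_T$.
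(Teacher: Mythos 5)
Your proof is correct, and the forward direction is essentially the paper's: the common prefix of length $c$ maps to a common prefix of length $2c$, after which $\mu_T(0)=01$ meets $\mu_T(1)=10$. Where you genuinely diverge is the converse. The paper proves it directly by de-substitution: it takes the longest common prefix $u$ of $\mu_T(T[a])$ and $\mu_T(T[b])$, examines the last letter of $u$ to rule out a misaligned block (a $00$ or $11$ inside an image block), concludes $u=\mu_T(v)$ for some factor $v$, and then pulls back to get $v0$ as a prefix of $T[a]$ and $v1$ as a prefix of $T[b]$. You instead get the converse for free from trichotomy: since $T$ is aperiodic, distinct shifts are distinct words, so the lexicographic order is a strict total order on shifts, and any map preserving $<$ on a totally ordered family automatically reflects it (if $\mu_T(T[a])<\mu_T(T[b])$ then $T[a]>T[b]$ and $T[a]=T[b]$ are both impossible). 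Your route is shorter and more general, needing nothing about the morphism beyond the forward implication and injectivity on shifts; the paper's route is heavier but yields a concrete synchronization fact --- the first disagreement of the images occurs at an even position, immediately after an image $\mu_T(v)$ --- which is in the same spirit as the even/odd alignment arguments used repeatedly later in the paper, though the lemma as stated does not require it.
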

%--------------
\begin{proof}
If $T[a] < T[b]$, then there exists a finite factor $u$ of $T$, and suffixes $x$ and $y$ of $T$ so that
\begin{align*}
T[a] &= u0x \\
T[b] &= u1y.
\end{align*}
Thus we can see
\begin{align*}
\mu_T(T[a]) &= \mu_T(u)01\mu_T(x) \\
\mu_T(T[b]) &= \mu_T(u)10\mu_T(y)
\end{align*}
and therefore $\mu_T(T[a]) < \mu_T(T[b])$.

Suppose $\mu_T(T[a]) < \mu_T(T[b])$, then there exists a finite factor $u$ of $T$, and suffixes $x$ and $y$ of $T$ so that
\begin{align*}
\mu_T(T[a]) &= u0x \\
\mu_T(T[b]) &= u1y 
\end{align*}
If $u$ ends with a $0$, then $\mu_T(T[a])$ would have 00 at the end of $u0$, so $u$ ends with $10$ and $0x$ starts with $01$.  If $u$ ends with a $1$, then $\mu_T(T[b])$ would have 11 at the end of $u1$, so $u$ ends with $01$ and $1x$ starts with $10$.  In either case we have there is some factor $v$ so that $\mu_T(v) = u$.  Hence a prefix of $\mu_T(T[a])$ is $\mu_T(v)01$ and a prefix of $\mu_T(T[b])$ is $\mu_T(v)10$

Thus a prefix of $T[a]$ is $v0$ and a prefix of $T[b]$ is $v1$.  Therefore $T[a] < T[b]$.
$\qed$
\end{proof}
%--------------

%--------------
\begin{lemma}
\label{ImgOfZerosAndOnesTM}
If $u$ and $v$ are shifts of $T$ so that for some $a$ and $b$ $u = 0T[a]$ and $v = 1T[b]$, and hence $u<v$, $\mu_T(u) = 01\mu_T(T[a])$, and $\mu_T(v) = 10\mu_T(T[b])$.  Thus $0\mu_T(T[b]) < 01\mu_T(T[a]) < 10\mu_T(T[b]) < 1\mu_T(T[a])$.
%\begin{align}
%0\mu_T(T[b]) &< 01\mu_T(T[a]) \\
%10\mu_T(T[b]) &< 1\mu_T(T[a])
%\end{align}
\end{lemma}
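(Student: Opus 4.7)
The plan is to first recognise each of the four infinite words appearing in the conclusion as an explicit shift of $T$, and then resolve the three strict inequalities using a combination of trivial first-letter arguments and one careful position-by-position lexicographic comparison. Since $u = 0T[a]$ is a shift of $T$, we must have $u = T[a-1]$ with $T_{a-1} = 0$; similarly $v = T[b-1]$ with $T_{b-1} = 1$. Applying $\mu_T$ blockwise gives $T_{2a-2}T_{2a-1} = \mu_T(0) = 01$, $T_{2b-2}T_{2b-1} = \mu_T(1) = 10$, $T[2a] = \mu_T(T[a])$, and $T[2b] = \mu_T(T[b])$. Reading these off yields the four identifications $0\mu_T(T[b]) = T[2b-1]$, $01\mu_T(T[a]) = T[2a-2]$, $10\mu_T(T[b]) = T[2b-2]$, and $1\mu_T(T[a]) = T[2a-1]$. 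The middle inequality $T[2a-2] < T[2b-2]$ is then immediate from comparing first letters (and is also a direct instance of Lemma \ref{OrderPresMorph} applied to $u < v$).

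For the first inequality $T[2b-1] < T[2a-2]$, the plan is to compare the two words letter by letter. Writing $\overline{x}$ for the complement of $x$ and unfolding $\mu_T$, the sequences of letters read
$$T[2b-1] \;=\; 0,\, T_b,\, \overline{T_b},\, T_{b+1},\, \overline{T_{b+1}},\, T_{b+2},\, \overline{T_{b+2}},\, \ldots$$
$$T[2a-2] \;=\; 0,\, 1,\, T_a,\, \overline{T_a},\, T_{a+1},\, \overline{T_{a+1}},\, T_{a+2},\, \ldots$$
The crucial observation I expect to use is that a chain of successive matches collapses to the requirement that $T_{b+k} = 1$ and $T_{a+k} = 0$ for every $k$ reached, while at the first position where the chain is broken the corresponding entry of $T[2b-1]$ equals $0$ and that of $T[2a-2]$ equals $1$. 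Since $T$ is overlap-free it contains neither $000$ nor $111$ as a factor, so a mismatch is forced within the first handful of positions and the required inequality follows.

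The third inequality $T[2b-2] < T[2a-1]$ is handled by a completely parallel letter-by-letter comparison: both words now start with $1$, the match chain again reduces to the same forbidden constant patterns on $T[a]$ and $T[b]$, and at every candidate mismatch it is the entry of $T[2b-2]$ that equals $0$ against that of $T[2a-1]$ equal to $1$. The main obstacle throughout is simply the bookkeeping needed to line up the one-letter $\mu_T$-block offset between each pair of words being compared; once that alignment is tabulated, overlap-freeness of $T$ closes the argument in both cases.
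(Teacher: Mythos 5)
Your proposal is correct, but it proves the three inequalities by a different mechanism than the paper. The paper never identifies the four words as shifts of $T$; instead it observes that, because $0T[a]$ and $1T[b]$ are shifts of the overlap-free word $T$, the word $T[a]$ must begin with $01$ or $1$ and $T[b]$ with $10$ or $0$, and then it simply lists the resulting short prefixes ($001$ or $01001$ for $0\mu_T(T[b])$, $0110$ or $010110$ for $01\mu_T(T[a])$, $1001$ or $101001$ for $10\mu_T(T[b])$, $110$ or $10110$ for $1\mu_T(T[a])$) and checks the chain of inequalities on these explicit prefixes in all combinations. You instead unfold $\mu_T$ blockwise, align the two words letter by letter, and argue that every candidate first mismatch puts a $0$ on the smaller side and a $1$ on the larger, with a mismatch forced because a perpetual match would require $T_{b+k}=1$ and $T_{a+k}=0$ for all $k$, contradicting the absence of $000$ and $111$; I checked the alignment bookkeeping and it works for both the first and third inequalities, and your remark that the middle inequality is an instance of Lemma \ref{OrderPresMorph} is also correct. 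What each approach buys: the paper's argument is shorter and reduces to a finite comparison of words of length at most six, while yours avoids the case split on first letters, explains structurally why the first point of difference must fall the right way, and in fact needs only aperiodicity (distinct shifts differ somewhere) rather than overlap-freeness to guarantee the mismatch exists, overlap-freeness merely bounding where it occurs. One small point worth making explicit in your write-up: the identification $u = T[a-1]$ with $T_{a-1}=0$ uses that distinct shifts of the aperiodic word $T$ are distinct words (so the shift index is forced); alternatively you can skip the identification entirely, since the letter expansions of $0\mu_T(T[b])$, $01\mu_T(T[a])$, etc.\ follow directly from applying $\mu_T$ to $T[a]$ and $T[b]$.
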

%--------------
\begin{proof}
The first letters in $T[a]$ will be either $01$ or $1$, thus $\mu_T(T[a])$ will start with either $0110$ or $10$, respectively.  The first letters in $T[b]$ will be either $10$ or $0$, thus $\mu_T(T[b])$ will start with either $1001$ or $01$, respectively.

Then $0\mu_T(T[b])$ will start with $01001$ or $001$ and $01\mu_T(T[a])$ will start with $010110$ or $0110$.  Thus $001<01001<010110<0110$, so 
$$0\mu_T(T[b]) < 01\mu_T(T[a]).$$

Then $10\mu_T(T[b])$ will start with $101001$ or $1001$ and $1\mu_T(T[a])$ will start with $10110$ or $110$.  Thus $1001<101001<10110<110$, so 
$$10\mu_T(T[b]) < 1\mu_T(T[a]).$$

Therefore 
$$0\mu_T(T[b]) < 01\mu_T(T[a]) < 10\mu_T(T[b]) < 1\mu_T(T[a]).$$
$\qed$
\end{proof}
%--------------
%---------------------------------
% Define p'_{2i} and p'_{2i+1} from p_i
%---------------------------------

Let $u$ be a factor of $T$ of length $n$.  There is an $a \in \N$ so that $u = T[a,a+n-1]$.  Also recall that $\abs{u}_1$ is the number of occurrences of the letter $1$ in $u$, and that $\abs{u}_1 = n-\abs{u}_0$.  Let $p = \pi_T[a,a+n]$ be a subpermutation of $\pi_T$ with form $u$.  Then $\mu_T(u) = T[2a, 2a + 2n-1]$, and let $p'$ be the subpermutation $p' = \pi_T[2a, 2a + 2n]$ with form $\mu_T(u)$.  When Lemma $\ref{ImgOfZerosAndOnesTM}$ is used with this notation, for $0\leq i,j \leq n-1$, where $T_i=0$ and $T_j = 1$, we have $p_i < p_j$ and $p'_{2j+1} < p'_{2i} < p'_{2j} < p'_{2i+1}$.  The following lemma describes the values of $p'$ in terms of the values of $p$.
%--------------
\begin{proposition}
\label{CalculateTheFwdImage}
Let $u$, $p$, and $p'$ be as described above.  For any $i \in \{0, 1, \ldots, n \}$:
$$ p'_{2i} = p_i + \abs{u}_1 $$
and for any $i \in \{0, 1, \ldots, n-1 \}$:
$$ p'_{2i+1} = \begin{cases} 
p_i + \abs{u}_1+(n+1) & \text{if $p_i < p_{i+1}$ and $p_i < p_n$} \\
p_i + \abs{u}_1+n & \text{if $p_i < p_{i+1}$ and $p_i > p_n$} \\
p_i + \abs{u}_1 - n & \text{if $p_i > p_{i+1}$ and $p_i < p_n$} \\
p_i + \abs{u}_1 - (n+1) & \text{if $p_i > p_{i+1}$ and $p_i > p_n$}
\end{cases} $$
\end{proposition}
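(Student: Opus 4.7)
The plan is to compute each $p'_j$ as one plus the number of competitors $T[2a+k]$ (with $k \in \{0, 1, \ldots, 2n\} \setminus \{j\}$) that are lexicographically smaller than $T[2a+j]$. The two workhorses will be Lemma $\ref{OrderPresMorph}$, which reduces any comparison of the form $\mu_T(T[\alpha])$ versus $\mu_T(T[\beta])$ to the comparison of $T[\alpha]$ with $T[\beta]$, and Lemma $\ref{ImgOfZerosAndOnesTM}$, whose four-way chain resolves every comparison in which exactly one of the two shifts has been displaced by one letter into the middle of a $\mu_T$-block.

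For the even values $p'_{2i}$, I would write $T[2a+2i] = \mu_T(T[a+i])$ and split the $2n$ competitors by the parity of $k$. For $k = 2j$ with $j \neq i$, Lemma $\ref{OrderPresMorph}$ shows $T[2a+2j] < T[2a+2i]$ if and only if $p_j < p_i$, contributing $p_i - 1$. For $k = 2j+1$, the competitor has the form $\overline{T_{a+j}}\mu_T(T[a+j+1])$, and a short case analysis in the four possibilities for the pair $(T_{a+i}, T_{a+j})$---invoking Lemma $\ref{ImgOfZerosAndOnesTM}$ whenever the two shifts start with the same letter---shows that this competitor is smaller than $T[2a+2i]$ precisely when $T_{a+j} = 1$. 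These contribute exactly $|u|_1$, giving $p'_{2i} = p_i + |u|_1$.

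For the odd values $p'_{2i+1}$, I would split on $T_{a+i}$ and treat the case $T_{a+i} = 0$ (so $p_i < p_{i+1}$) in detail, the case $T_{a+i} = 1$ being symmetric. Here $T[2a+2i+1] = 1\mu_T(T[a+i+1])$ starts with $1$, so every competitor starting with $0$ is automatically smaller, and every competitor of the form $T[2a+2j]$ with $T_{a+j} = 1$ is also smaller by Lemma $\ref{ImgOfZerosAndOnesTM}$. A short parity count shows that these two groups together contribute exactly $n + 1 + |u|_1$ competitors, independently of $T_{a+n}$. The only remaining competitors are the $T[2a+2j+1]$ with $T_{a+j} = 0$ and $j \neq i$; by Lemma $\ref{OrderPresMorph}$ each such shift is smaller than $T[2a+2i+1]$ if and only if $p_{j+1} < p_{i+1}$, so these contribute a residual count $Q'$.

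To finish I would evaluate $Q'$. Claim $\ref{PCClaim01}$(3) with $T_{a+j} = T_{a+i} = 0$ converts $p_{j+1} < p_{i+1}$ into $p_j < p_i$, so $Q' = |\{j \in \{0, \ldots, n-1\}: j \neq i, T_{a+j} = 0, p_j < p_i\}|$. The key observation is that whenever $T_{a+k} = 1$ one automatically has $p_k > p_i$, since $T[a+k]$ starts with $1$ while $T[a+i]$ starts with $0$. Consequently the $p_i - 1$ indices $k \in \{0, \ldots, n\}$ with $p_k < p_i$ consist only of $k \in \{0, \ldots, n-1\}$ with $T_{a+k} = 0$, possibly together with $k = n$; so $Q' = p_i - 1$ when $p_i < p_n$ and $Q' = p_i - 2$ when $p_i > p_n$. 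Substituting gives $p'_{2i+1} = p_i + |u|_1 + (n+1)$ in the former subcase and $p'_{2i+1} = p_i + |u|_1 + n$ in the latter, which are the first two cases of the proposition; the mirror-image analysis for $T_{a+i} = 1$ produces the other two. The main obstacle is really this accounting step: one must notice both that the dependence on the boundary letter $T_{a+n}$ telescopes out of the preliminary count and that the remaining $n$-versus-$(n+1)$ split emerges precisely from whether $p_n$ exceeds $p_i$ or not.
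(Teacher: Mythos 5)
Your proposal is correct and follows essentially the same route as the paper's proof: you compute each entry of $p'$ by counting the lexicographically smaller competitors, split them by parity, resolve the mixed comparisons with Lemma \ref{OrderPresMorph} and Lemma \ref{ImgOfZerosAndOnesTM}, and then case on $p_i$ versus $p_{i+1}$ and $p_i$ versus $p_n$ exactly as in the paper's subcases. Your only addition is that you justify the residual counts $p_i-1$ versus $p_i-2$ more explicitly (via Claim \ref{PCClaim01}(3) and the observation that $T_{a+k}=1$ forces $p_k>p_i$), a step the paper states with less elaboration.
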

%--------------
\begin{proof}
%---------------
%  p'_2i case
%---------------
To take care of the $p'_{2i}$ terms, let $i \in \{0, 1, \ldots, n \}$.  There will be $p_i-1$ many $j$ so that $p_i > p_j$, so there are $p_i-1$ many $j$ so that $p'_{2i} > p'_{2j}$.  Clearly, if $p_i  < p_j$ then $p'_{2i} < p'_{2j}$.  So there are exactly $p_i-1$ many even $j$ so that $p'_{2i} > p'_j$.  There are $\abs{u}_1$ many $j$ so that $T_{a+j} = 1$, so there are $\abs{u}_1$ many $j$ so that $p'_{2i} > p'_{2j+1}$ and $\abs{u}_0$ many $j$ so that $T_{a+j}=0$, so $p'_{2i} < p'_{2j+1}$.  So there are exactly $\abs{u}_1$ many odd $j$ so that $p'_{2i} > p'_j$.  Thus there are exactly $p_i-1+\abs{u}_1$ many $j$ so that $p'_{2i} > p'_j$, and therefore $ p'_{2i} =(p_i-1+\abs{u}_1)+1 = p_i + \abs{u}_1 $.
%---------------
%  p'_2i+1 case
%---------------

The $p'_{2i+1}$ terms will be done in two cases.  First when $p_i < p_{i+1}$ and then when $p_i > p_{i+1}$.
%---------------
%  p_i = 0
%---------------

$\textbf{Case a:}$ Suppose that $p_i < p_{i+1}$, so $T_{a+i} = 0$.  For each $j = 0, 1, \ldots, n$ we must have $p'_{2i+1} > p'_{2j}$, so for each even $j$ (there are $n+1$ many such $j$) $p'_{2i+1} > p'_j$.  There are $\abs{u}_1$ many $j$ so that $T_{a+j} = 1$, so there are $\abs{u}_1$ many $j$ so that $p_{2i+1} > p_{2j+1}$.  Thus the only other $j$ where $p'_{2j+1}$ can be less than $p'_{2i+1}$ are $j \in \{0, 1, \ldots, n-1  \}$ where $T_{a+j} = 0$ and $p_i > p_j$.

$\textbf{Subcase a.1:}$ If $p_i < p_n$ then there are $p_i - 1$ many $j$ so that $T_{a+j}=0$ and $p_i > p_j$, and then $n-p_i-\abs{u}_1= \abs{u}_0 - p_i$ many $j$ so that $T_{a+j}=0$ and $p_i < p_j$.  Thus there can only be $(n+1) + \abs{u}_1 + p_i - 1$ many $j$ so that $p'_{2i+1} > p'_j$, and therefore $p'_{2i+1} = (n+1) + \abs{u}_1 + p_i - 1 + 1 = p_i + \abs{u}_1 + (n+1)$.

$\textbf{Subcase a.2:}$ If $p_i > p_n$ then there are $p_i - 2$ many $j$ so that $T_{a+j}=0$ and $p_i > p_j$ (since $T_{a+n}$ is not in $u = T[a,a+n-1]$), and then $n-(p_i-1)-\abs{u}_1= \abs{u}_0 - (p_i - 1)$ many $j$ so that $T_{a+j}=0$ and $p_i < p_j$.  Thus there can only be $(n+1) + \abs{u}_1 + p_i - 2$ many $j$ so that $p'_{2i+1} > p'_j$, and therefore $p'_{2i+1} = (n+1) + \abs{u}_1 + p_i - 2 + 1 = p_i + \abs{u}_1 + n$.
%---------------
%  p_i = 1
%---------------

$\textbf{Case b:}$ Suppose that $p_i > p_{i+1}$, so $T_{a+i} = 1$.  For each $j = 0, 1, \ldots, n$ we must have $p'_{2i+1} < p'_{2j}$, so for each even $j$ (there are $n+1$ many such $j$) $p'_{2i+1} < p'_j$.  There are $\abs{u}_0$ many $j$ so that $T_{a+j} = 0$, so there are $\abs{u}_0$ many $j$ so that $p_{2i+1} < p_{2j+1}$.  Thus the only other $j$ where $p'_{2j+1}$ can be less than $p'_{2i+1}$ are $j \in \{0, 1, \ldots, n-1  \}$ where $T_{a+j} = 1$ and $p_i > p_j$.

$\textbf{Subcase b.1:}$ If $p_i < p_n$ then there are $(p_i - 1) - \abs{u}_0$ many $j$ so that $T_{a+j}=1$ and $p_i > p_j$, and there can only be $\abs{u}_1 - (p_i - 1 - \abs{u}_0) - 1 = n-p_i$ many $j$ so that $T_{a+j}=1$ and $p_i < p_j$ (since $T_{a+n}$ is not in $u = T[a,a+n-1]$).  Thus there can only be $(p_i - 1) - \abs{u}_0 = p_i - 1 - (n-\abs{u}_1) = p_i +\abs{u}_1 - n -1$ many $j$ so that $p'_{2i+1} > p'_j$, and therefore $p'_{2i+1} = p_i +\abs{u}_1 - n -1 + 1 = p_i + \abs{u}_1 - n$.

$\textbf{Subcase b.2:}$ If $p_i > p_n$ then there are $(p_i - 2) - \abs{u}_0$ many $j$ so that $T_{a+j}=1$ and $p_i > p_j$ (since $T_{a+n}$ is not in $u = T[a,a+n-1]$), and there can only be $\abs{u}_1 - (p_i - 2 - \abs{u}_0) - 1 = (n+1)-p_i$ many $j$ so that $T_{a+j}=1$ and $p_i < p_j$.  Thus there can only be $(p_i - 2) - \abs{u}_0 = p_i - 2 - (n-\abs{u}_1) = p_i +\abs{u}_1 - n -2$ many $j$ so that $p'_{2i+1} > p'_j$, and therefore $p'_{2i+1} = p_i +\abs{u}_1 - n -2 + 1 = p_i + \abs{u}_1 - (n+1)$.
$\qed$
\end{proof}
%--------------

%-----------------------------------------------------------------
%-----------------------------------------------------------------
Fix a subpermutation $p=\pi_T[a,a+n]$, and then let $p'=\pi_T[2a,2a+2n]$.  So the terms of $p'$ can be defined using the method defined in Proposition $\ref{CalculateTheFwdImage}$.  Let $q=\pi_T[b,b+n]$, $b \neq a$, be a subpermutation of $\pi_T$ and let $q'=\pi_T[2b,2b+2n]$ as in Proposition $\ref{CalculateTheFwdImage}$.  The following lemma concerns the relationship of $p$ and $q$ to $p'$ and $q'$.  Therefore the idea of $p'$ can ne used to define a map on the subpermutations of $\pi_T$, and the map will be well-defined by Proposition $\ref{CalculateTheFwdImage}$.

%---------------------------------
%        p = q iff p' = q'
%---------------------------------
%--------------
\begin{lemma}
\label{pISqIFFppISqp}
$p \neq q$ if and only if $p' \neq q'$.
\end{lemma}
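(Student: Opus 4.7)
The plan is to reduce both implications to Proposition \ref{CalculateTheFwdImage} combined with Claim \ref{PCClaim01}, which identifies the ascent/descent pattern of any subpermutation of $\pi_T$ with the underlying factor of $T$. Throughout, let $u = T[a, a+n-1]$ and $v = T[b, b+n-1]$ be the length-$n$ factors of $T$ underlying $p$ and $q$, so that the form of $p$ is $u$ and the form of $q$ is $v$, while the form of $p'$ is $\mu_T(u) = T[2a, 2a+2n-1]$ and the form of $q'$ is $\mu_T(v)$.

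For the direction $p = q \Rightarrow p' = q'$, equality of $p$ and $q$ as permutations forces their ascent/descent patterns to coincide, so Claim \ref{PCClaim01} gives $T_{a+i} = T_{b+i}$ for every $i \in \{0, 1, \ldots, n-1\}$; in particular $u = v$ and $|u|_1 = |v|_1$. Proposition \ref{CalculateTheFwdImage} then writes every entry of $p'$ as an explicit function of the entries of $p$, the count $|u|_1$, and $n$, with the case split on odd indices governed only by the ascent/descent pattern of $p$ and by the comparisons of $p_i$ with $p_n$, i.e., by data already contained in $p$. The analogous formulas compute $q'$ from the same inputs, so $p' = q'$.

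For the converse, if $p' = q'$ then they have the same form, giving $\mu_T(u) = \mu_T(v)$. Since $\mu_T(0) = 01$ and $\mu_T(1) = 10$ are distinct two-letter blocks, $\mu_T$ is injective on $\{0,1\}^*$ (equality of images forces equality of lengths, after which we desubstitute two letters at a time), so $u = v$ and in particular $|u|_1 = |v|_1$. The identity $p'_{2i} = p_i + |u|_1$ from Proposition \ref{CalculateTheFwdImage} now inverts to recover $p_i = p'_{2i} - |u|_1 = q'_{2i} - |v|_1 = q_i$ for every $i \in \{0, 1, \ldots, n\}$, and hence $p = q$. No individual step is particularly delicate; the substantive point is simply that the shift $|u|_1$ mediating between $p$ and $p'$ is recoverable from the form of either one, so the passage from $p$ to $p'$ becomes a bijection on the set of subpermutations that arise.
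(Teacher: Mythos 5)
Your proposal is correct, but its two halves are distributed differently from the paper's. The direction $p = q \Rightarrow p' = q'$ is handled the same way in both: equal permutations have equal forms, hence $u = v$ by Claim \ref{PCClaim01}, and Proposition \ref{CalculateTheFwdImage} then computes $p'$ and $q'$ from identical inputs. For the other direction the paper argues directly: from $p \neq q$ it picks indices $i,j$ with $p_i < p_j$ but $q_i > q_j$, applies the order-preservation of $\mu_T$ (Lemma \ref{OrderPresMorph}) to the corresponding shifts, and concludes $p'_{2i} < p'_{2j}$ while $q'_{2i} > q'_{2j}$, so the disagreement between $p$ and $q$ survives at the even positions of the images. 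You instead prove the contrapositive: $p' = q'$ forces equal forms, hence $\mu_T(u) = \mu_T(v)$; injectivity of $\mu_T$ on finite words gives $u = v$, so $|u|_1 = |v|_1$, and the even-index identity $p'_{2i} = p_i + |u|_1$ inverts to recover $p_i = q_i$ for all $i$. Both arguments are sound. The paper's route uses only order preservation and, as a bonus, locates the discrepancy at even positions, which matches the even/odd bookkeeping used later; your route is more constructive, exhibiting $p$ as recoverable from $p'$ together with its form, so that $\phi$ is injective because it is effectively invertible on its image. One remark on dependencies: since the proof of Proposition \ref{CalculateTheFwdImage} itself rests on the order-preservation machinery, your argument does not actually avoid Lemma \ref{OrderPresMorph}; it just routes everything through the explicit formulas.
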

%--------------
\begin{proof}
Supposing that $p \neq q$, there are $i,j \in \{0, 1, \ldots,  n \}$ so that $p_i < p_j$ and $q_i > q_j$ and thus 
\begin{align*}
T[a+i] &< T[a+j] \\
T[b+i] &> T[b+i].
\end{align*}
Then since the Thue-Morse morphism is order preserving we have
\begin{align*}
T[2(a+i)] = \mu_T(T[a+i]) &< \mu_T(T[a+j]) = T[2(a+j)] \\
T[2(b+i)] = \mu_T(T[b+i]) &> \mu_T(T[b+j]) = T[2(b+j)].
\end{align*}
Therefore $p'_{2(a+i)}<p'_{2(a+j)}$ and $q'_{2(b+i)}>q'_{2(b+j)}$ so $p' \neq q'$.

Now to show by contrapositive, suppose that $p=q$, so $p_i = q_i$ for each $i \in \{0, 1, \ldots,  n \}$.  Since $p=q$, $p$ and $q$ have the same form, because $p_i < p_{i+1}$ if and only if $q_i < q_{i+1}$, so $T[a,a+n-1] = T[b,b+n-1]$ and thus $T[2a,2a+2n-1] = T[2b,2b+2n-1]$.  Then by Proposition $\ref{CalculateTheFwdImage}$ it should be clear that for each $j \in \{0, 1, \ldots,  2n \}$ we have $p'_j = q'_j$, and thus $p' = q'$.

Therefore if $p' \neq q'$ then $p \neq q$.
$\qed$
\end{proof}
%--------------

%--------------
\begin{corollary}
\label{CorTo_pisqiff}
If $p=\pi_T[a,a+n]=\pi_T[b,b+n]$ for some $a \neq b$, then $\pi_T[2a,2a+2n]=\pi_t[2b,2b+2n]$. 
\end{corollary}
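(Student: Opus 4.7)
The plan is to obtain this corollary as a direct restatement of one direction of Lemma \ref{pISqIFFppISqp}. Concretely, I would set $p = \pi_T[a,a+n]$ and $q = \pi_T[b,b+n]$, and define $p' = \pi_T[2a, 2a+2n]$ and $q' = \pi_T[2b, 2b+2n]$, which matches the notation in which Lemma \ref{pISqIFFppISqp} is stated. The hypothesis $p = q$ (with $a \neq b$) then triggers the backward implication of the biconditional: the contrapositive of the ``if $p' \neq q'$ then $p \neq q$'' half of the lemma says precisely that $p = q$ forces $p' = q'$, which is exactly $\pi_T[2a, 2a+2n] = \pi_T[2b, 2b+2n]$.

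Because Lemma \ref{pISqIFFppISqp} has already done the computational work (using Lemma \ref{OrderPresMorph} and Proposition \ref{CalculateTheFwdImage} to relate the values of $p'$ to those of $p$), there is essentially no obstacle and no new calculation. If I wanted to make the argument self-contained rather than a one-line invocation, I would simply recall that when $p = q$ the two underlying factors $T[a,a+n-1]$ and $T[b, b+n-1]$ coincide, so $|u|_1$ takes the same value for both and the piecewise formulas of Proposition \ref{CalculateTheFwdImage} return identical outputs coordinate by coordinate.

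I would close the proof with a remark pointing out the conceptual content: the assignment $\pi_T[a, a+n] \mapsto \pi_T[2a, 2a+2n]$ descends to a well-defined map $Perm(n+1) \to Perm(2n+1)$ depending only on the subpermutation and not on the index $a$ at which it was read. This is the property the surrounding discussion already advertises (``the idea of $p'$ can be used to define a map on the subpermutations of $\pi_T$''), so the corollary is really the formal justification that this induced morphism-on-subpermutations map is well defined, which will be the bookkeeping device used in later sections.
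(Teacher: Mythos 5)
Your proposal is correct and matches the paper's treatment: the corollary is indeed just the contrapositive of the ``$p' \neq q'$ implies $p \neq q$'' direction of Lemma \ref{pISqIFFppISqp}, which is exactly how the paper derives it (the lemma's own proof already contains the observation you sketch, that $p=q$ forces equal forms, equal factors, and hence identical outputs from Proposition \ref{CalculateTheFwdImage}). No gap; nothing further is needed.
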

%--------------

Thus there is a well-defined function on the subpermutations of $\pi_T$.  Let $p = \pi_T[a,a+n]$, and define $\phi(p) = p' = \pi_T[2a,2a+2n]$ using the formula in Proposition $\ref{CalculateTheFwdImage}$.  Thus we have the map 
$$\phi:Perm(n+1) \rightarrow Perm(2n+1)$$
which is injective by Lemma $\ref{pISqIFFppISqp}$.  Not all subpermutations of $\pi_T$ will be the image under $\phi$ of another subpermutation.

Let $n \geq 5$ and $a$ be natural numbers.  Then $n$ and $a$ can be either even or odd, and for the subpermutation $\pi_T[a,a+n]$, there exist natural numbers $b$ and $m$ so that one of 4 cases hold:
\begin{enumerate}
\item $\pi_T[a,a+n] = \pi_T[2b,2b+2m]$, even starting position with odd length
\item $\pi_T[a,a+n] = \pi_T[2b,2b+2m-1]$, even starting position with even length
\item $\pi_T[a,a+n] = \pi_T[2b+1,2b+2m]$, odd starting position with even length
\item $\pi_T[a,a+n] = \pi_T[2b+1,2b+2m+1]$, odd starting position with odd length
\end{enumerate}

Consider two subpermutations of length $n > 5$, $\pi_T[2c, 2c+n]$ and $\pi_T[2d+1, 2d+n+1]$.  The subpermutations $\pi_T[2c, 2c+n]$ will have form $T[2c, 2c+n-1]$, and $\pi_T[2d+1, 2d+n+1]$ will have form $T[2d+1, 2d+n]$.  Since the length of these factors is at least 5, we know that $T[2c, 2c+n-1] \neq T[2d+1, 2d+n]$, and thus $\pi_T[2c, 2c+n] \neq \pi_T[2d+1, 2d+n+1]$ because they do not have the same form.  Thus we can break up the set $Perm(n)$ into two classes of subpermutations, namely the subpermutations that start at an even position or an odd position.  So say that $Perm_{ev}(n)$ is the set of subpermutations $p$ of length $n$ so that $p = \pi_T[2b,2b+n-1]$ for some $b$, and that $Perm_{odd}(n)$ is the set of subpermutations $p$ of length $n$ so that $p = \pi_T[2b+1,2b+n]$ for some $b$.  Thus
$$Perm(n) = Perm_{ev}(n) \cup Perm_{odd}(n),$$
where we have  
$$Perm_{ev}(n) \cap Perm_{odd}(n) = \emptyset.$$

Thus for $n \geq 3$, $Perm_{ev}(2n+1)$ is the set of all subpermutations of length $2n+1$ starting at an even position.  So for $\pi_T[2a,2a+2n]$, we know there is a subpermutation $p = \pi_T[a,a+n]$ so that $\phi(p) = p' = \pi_T[2a,2a+2n]$.  Thus the map 
$$\phi:Perm(n+1) \rightarrow Perm_{ev}(2n+1)$$
is also a surjective map, and is thus a bijection.  The next definition about the restriction of subpermutations will be helpful to count the size of the sets $Perm_{odd}(2n)$, $Perm_{ev}(2n)$, and $Perm_{odd}(2n+1)$.

\begin{definition}
Let $p = \pi[a,a+n]$ be a subpermutation of the infinite permutation $\pi$.  The $\textit{left restriction of}$ $p$, denoted by $L(p)$, is the subpermutation of $p$ so that $L(p) = \pi[a, a+n-1]$.  The $\textit{right restriction of}$ $p$, denoted by $R(p)$, is the subpermutation of $p$ so that $R(p) = \pi[a+1, a+n]$.  The $\textit{middle restriction of}$ $p$, denoted by $M(p)$, is the subpermutation of $p$ so that $M(p) = R(L(p)) = L(R(p)) = \pi[a+1, a+n-1]$.
\end{definition}

For each $i$, there are $p_i-1$ terms in $p$ that are less than $p_i$ and there are $n-p_i$ terms that are greater than $p_i$.  Thus consider $i \in \{0, 1, \ldots, n-1\}$ and the values of $L(p)_i$ and $R(p)_i$.  If $p_0 < p_{i+1}$ there will be $p_{i+1}-2$ terms in $R(p)$ less than $R(p)_i$ so we have $R(p)_i = p_{i+1}-1$.  In a similar sense, if $p_n < p_i$ we have $L(p)_i = p_i - 1$.  If $p_0 > p_{i+1}$ there will be $p_{i+1}-1$ terms in $R(p)$ less than $R(p)_i$ so we have $R(p)_i = p_{i+1}$.  In a similar sense, if $p_n > p_i$ we have $L(p)_i = p_i $.  

The values in $M(p)$ can be found by finding the values in $R(L(p))$ or $L(R(p))$.  Since $R(L(p))$ or $L(R(p))$ correspond to the same subpermutation of $p$, $R(L(p))_i < R(L(p))_j$ if and only if $L(R(p))_i < L(R(p))_j$.  Therefore $R(L(p)) = L(R(p))$.

It should also be clear that if there are two subpermutations $p= \pi_T[a,a+n]$ and $q = \pi_T[b,b+n]$ so that $p=q$ then $L(p) = L(q)$, $R(p) = R(q)$, and $M(p) = M(q)$ since if $p=q$ then $p_i < p_j$ if and only if $q_i < q_j$.  

For $p=\pi_T[a,a+n]$, we can then define three additional maps by looking at the left, right, and middle restrictions of $\phi(p) = p'$.  These maps are
\begin{align*}
\phi_L:Perm(n+1) &\rightarrow Perm_{ev}(2n) \\
\phi_R:Perm(n+1) &\rightarrow Perm_{odd}(2n) \\
\phi_M:Perm(n+2) &\rightarrow Perm_{odd}(2n+1) 
\end{align*}
and are defined by
\begin{align*}
\phi_L(p) &= L(\phi(p)) = L(p')\\
\phi_R(p) &= R(\phi(p)) = R(p')\\
\phi_M(p) &= M(\phi(p)) = M(p')
\end{align*}
It can be readily verified that these three maps are surjective.  To see an example of this, consider the map $\phi_L$, and let $\pi_T[2b,2b+2n-1]$ be a subpermutation in $Perm_{ev}(2n)$.  Then for the subpermutation $p=\pi_T[b,b+n]$, $\phi_L(p) = L(p') = \pi_T[2b,2b+2n-1]$ so $\phi_L$ is surjective.  A similar argument will show that $\phi_R$ and $\phi_M$ are also surjective.  

%--------------
\begin{lemma}
\label{UpperBoundForTau}
For $n \geq 2$:
\begin{align*}
\tau_T(2n) &\leq 2(\tau_T(n+1)) \\
\tau_T(2n+1) &\leq  \tau_T(n+1) + \tau_T(n+2) 
\end{align*}
\end{lemma}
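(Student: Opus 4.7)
The plan is to simply assemble the pieces that have already been developed in this section. Every subpermutation of $\pi_T$ of length $m$ occurs as $\pi_T[c,c+m-1]$ for some $c \geq 0$, and $c$ is either even or odd, so
$$Perm(m) = Perm_{ev}(m) \cup Perm_{odd}(m),$$
and hence by the union bound
$$\tau_T(m) = \abs{Perm(m)} \leq \abs{Perm_{ev}(m)} + \abs{Perm_{odd}(m)}.$$
Note that I do not need the two classes to be disjoint here, so this step works regardless of how small $n$ is, which is convenient since the disjointness argument in the preceding text requires length $\geq 5$.

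Next I would apply the surjective maps $\phi_L, \phi_R, \phi_M$ constructed right before the lemma. For the even-length case $m = 2n$, surjectivity of $\phi_L \colon Perm(n+1) \to Perm_{ev}(2n)$ gives $\abs{Perm_{ev}(2n)} \leq \tau_T(n+1)$, and surjectivity of $\phi_R \colon Perm(n+1) \to Perm_{odd}(2n)$ gives $\abs{Perm_{odd}(2n)} \leq \tau_T(n+1)$. Adding these yields $\tau_T(2n) \leq 2\tau_T(n+1)$. For the odd-length case $m = 2n+1$, I use the bijection $\phi \colon Perm(n+1) \to Perm_{ev}(2n+1)$ (established via injectivity from Lemma \ref{pISqIFFppISqp} together with surjectivity noted in the text) to get $\abs{Perm_{ev}(2n+1)} = \tau_T(n+1)$, and surjectivity of $\phi_M \colon Perm(n+2) \to Perm_{odd}(2n+1)$ to get $\abs{Perm_{odd}(2n+1)} \leq \tau_T(n+2)$, so that $\tau_T(2n+1) \leq \tau_T(n+1) + \tau_T(n+2)$.

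Since both inequalities reduce immediately to counting, the only thing to check carefully is the surjectivity of the three restriction maps (which the text already asserts is a routine argument: given a target subpermutation $\pi_T[2b, 2b + 2n - 1]$ in $Perm_{ev}(2n)$, take $p = \pi_T[b, b+n]$ and observe that $\phi_L(p)$ agrees with it, and analogously for $\phi_R$ and $\phi_M$). The main obstacle, if there is one, is purely bookkeeping: making sure the domain length matches for $\phi_M$, which needs $Perm(n+2)$ rather than $Perm(n+1)$ because the middle restriction drops a letter on each side of the factor of length $2n+1$ obtained from $\phi$. Once this is noted, the two displayed inequalities fall out immediately.
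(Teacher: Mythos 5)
Your proposal is correct and follows essentially the same route as the paper: split $Perm(2n)$ and $Perm(2n+1)$ into the even-start and odd-start classes, then use the bijection $\phi$ onto $Perm_{ev}(2n+1)$ and the surjectivity of $\phi_L$, $\phi_R$, $\phi_M$ onto $Perm_{ev}(2n)$, $Perm_{odd}(2n)$, $Perm_{odd}(2n+1)$ to bound each class and add. Your observation that only the union bound (not disjointness of the two classes) is needed is a minor but sound refinement, since the paper's disjointness argument is stated only for lengths greater than $5$ while the lemma starts at $n = 2$.
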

%--------------
\begin{proof}
Let $n \geq 2$.  We have:
\begin{align*}
\abs{Perm_{ev}(2n)} &\leq \abs{Perm(n+1)} \\
\abs{Perm_{odd}(2n)} &\leq \abs{Perm(n+1)} \\
\\
\abs{Perm_{ev}(2n+1)} &= \abs{Perm(n+1)} \\
\abs{Perm_{odd}(2n+1)} &\leq \abs{Perm(n+2)}
\end{align*}
since $\phi$ is a bijection, and the 3 maps $\phi_L$, $\phi_R$, and $\phi_M$ are all surjective.  Thus we have the following inequalities:
\begin{align*}
\tau_T(2n) &= \abs{Perm(2n)} = \abs{Perm_{ev}(2n)} + \abs{Perm_{odd}(2n)} \\
 &\leq \abs{Perm(n+1)} + \abs{Perm(n+1)} = 2(\tau_T(n+1)) \\
\\
\tau_T(2n+1) &= \abs{Perm(2n+1)} = \abs{Perm_{ev}(2n+1)} + \abs{Perm_{odd}(2n+1)} \\
 &\leq \abs{Perm(n+1)} + \abs{Perm(n+2)} = \tau_T(n+1) + \tau_T(n+2) 
\end{align*}
$\qed$
\end{proof}
%--------------

The three maps $\phi_L$, $\phi_R$, and $\phi_M$ are not injective maps.  To see this, consider the subpermutations 
\begin{align*}
&p=\pi_T[5,9] = [2 \hspace{.5ex} 3 \hspace{.5ex} 5 \hspace{.5ex} 4 \hspace{.5ex} 1] \\
&q=\pi_T[23,27] = [1 \hspace{.5ex} 3 \hspace{.5ex} 5 \hspace{.5ex} 4 \hspace{.5ex} 2].
\end{align*}
Both of these subpermutations have form $T[5,8] = T[23,26] = 0011$.  Then applying the maps we see:
\begin{align*}
&p' = \phi(p) = \pi_T[10,18] = [4 \hspace{.5ex} 8 \hspace{.5ex} 5 \hspace{.5ex} 9 \hspace{.5ex} 7 \hspace{.5ex} 2 \hspace{.5ex} 6 \hspace{.5ex} 1 \hspace{.5ex} 3] \\
&q' = \phi(q) = \pi_T[46,54] = [3 \hspace{.5ex} 8 \hspace{.5ex} 5 \hspace{.5ex} 9 \hspace{.5ex} 7 \hspace{.5ex} 2 \hspace{.5ex} 6 \hspace{.5ex} 1 \hspace{.5ex} 4] 
\end{align*}
\begin{align*}
&\phi_L(p) = \pi_T[10,17] = [3 \hspace{.5ex} 7 \hspace{.5ex} 4 \hspace{.5ex} 8 \hspace{.5ex} 6 \hspace{.5ex} 2 \hspace{.5ex} 5 \hspace{.5ex} 1] \\
&\phi_L(q) = \pi_T[46,53] = [3 \hspace{.5ex} 7 \hspace{.5ex} 4 \hspace{.5ex} 8 \hspace{.5ex} 6 \hspace{.5ex} 2 \hspace{.5ex} 5 \hspace{.5ex} 1] 
\end{align*}
\begin{align*}
&\phi_R(p) = \pi_T[11,18] = [7 \hspace{.5ex} 4 \hspace{.5ex} 8 \hspace{.5ex} 6 \hspace{.5ex} 2 \hspace{.5ex} 5 \hspace{.5ex} 1 \hspace{.5ex} 3] \\
&\phi_R(q) = \pi_T[47,54] = [7 \hspace{.5ex} 4 \hspace{.5ex} 8 \hspace{.5ex} 6 \hspace{.5ex} 2 \hspace{.5ex} 5 \hspace{.5ex} 1 \hspace{.5ex} 3] 
\end{align*}
\begin{align*}
&\phi_M(p) = \pi_T[11,17] = [6 \hspace{.5ex} 3 \hspace{.5ex} 7 \hspace{.5ex} 5 \hspace{.5ex} 2 \hspace{.5ex} 4 \hspace{.5ex} 1] \\
&\phi_M(q) = \pi_T[47,53] = [6 \hspace{.5ex} 3 \hspace{.5ex} 7 \hspace{.5ex} 5 \hspace{.5ex} 2 \hspace{.5ex} 4 \hspace{.5ex} 1]
\end{align*}
So $p' \neq q'$ but $\phi_L(p) = \phi_L(q)$, $\phi_R(p) = \phi_R(q)$, and $\phi_M(p) = \phi_M(q)$, and these maps are not injective in general.  Hence the values in Lemma $\ref{UpperBoundForTau}$ are only an upper bound.  The next goal is to determine when these maps are not injective.

%=========================================================================================================================
%=========================================================================================================================
\section{Type $k$ and Complementary Pairs}
\label{TypeKandCompPairs}

An interesting pattern occurs in some subpermutations of $\pi_T$.  The subpermutations that follow this pattern are said to be subpermutations of type $k$ which is described in the next definition.  Proposition $\ref{CalculateTheFwdImage}$ will be used inductively to show the maps $\phi$, $\phi_L$, $\phi_R$, and $\phi_M$ preserve subpermutations of type $k$.  An induction argument with this fact will be used to show that two subpermutations have the same form if and only if they are a complimentary pair of type $k$, defined below.  A corollary of this will determine when the maps $\phi_L$, $\phi_R$, and $\phi_M$ are bijective.

\begin{definition}
A subpermutation $p = \pi_T[a,a+n]$ is of $\textit{type $k$}$, for $k \geq 1$, if $p$ can be decomposed as
$$ p = [\alpha_1 \cdots \alpha_k \lambda_1 \cdots \lambda_l \beta_1 \cdots \beta_k] $$
where $\alpha_i = \beta_i + \varepsilon$ for each $i = 1, 2, \ldots, k$ and an $\varepsilon \in \{ -1, 1 \}$.
\end{definition}

Some examples of subpermutations of type $1$, 2, and 3 (respectively) are:
\begin{align*}
&\pi_T[5,9] = [2 \hspace{.5ex} 3 \hspace{.5ex} 5 \hspace{.5ex} 4 \hspace{.5ex} 1] \\
&\pi_T[20,25] =  [2 \hspace{.5ex} 5 \hspace{.5ex} 4 \hspace{.5ex} 1 \hspace{.5ex} 3 \hspace{.5ex} 6] \\
&\pi_T[6,12] =  [3 \hspace{.5ex} 7 \hspace{.5ex} 5 \hspace{.5ex} 1 \hspace{.5ex} 2 \hspace{.5ex} 6 \hspace{.5ex} 4]
\end{align*}

\begin{definition}
Suppose that the subpermutation $p = \pi_T[a,a+n]$ is of type $k$ so that for $\varepsilon \in \{-1, 1 \}$, $\alpha_i = \beta_i + \varepsilon$ for each $i = 1, 2, \ldots, k$.  If there exists a subpermutation $q = \pi_T[b,b+n]$ of type $k$ so that $p$ and $q$ can be decomposed as:
\begin{align*}
p &= \pi_T[a,a+n] = [\alpha_1 \cdots \alpha_k \lambda_1 \cdots \lambda_l \beta_1 \cdots \beta_k] \\ 
q &= \pi_T[b,b+n] = [\beta_1 \cdots \beta_k \lambda_1 \cdots \lambda_l \alpha_1 \cdots \alpha_k]  
\end{align*}
then $p$ and $q$ are said to be a $\textit{complementary pair of type $k$}$.  If $p$ and $q$ are a $\textit{complementary pair of type}$ $k \leq 0$ then $p = q$.
\end{definition}

The subpermutations 
\begin{align*}
&\pi_T[5,9] = [2 \hspace{.5ex} 3 \hspace{.5ex} 5 \hspace{.5ex} 4 \hspace{.5ex} 1] \\
&\pi_T[23,27] = [1 \hspace{.5ex} 3 \hspace{.5ex} 5 \hspace{.5ex} 4 \hspace{.5ex} 2]
\end{align*}
are a complementary pair of type 1. The following subpermutation of type 1
$$ \pi_T[0,3] =  [2 \hspace{.5ex} 4 \hspace{.5ex} 3 \hspace{.5ex} 1] $$
does not have a complementary pair, since $[1 \hspace{.5ex} 4 \hspace{.5ex} 3 \hspace{.5ex} 2] $ is not a subpermutation of $\pi_T$.

The following proposition considers subpermutations of type $k$, and complementary pairs of type $k$.

%--------------
\begin{proposition}
\label{ImageOfTypeK}
Suppose $p = \pi_T[a,a+n]$ is of type $k$ and $q = \pi_T[b,b+n]$ is of type $k$, with $k \geq 1$, and that $p$ and $q$ are a complementary pair of type $k$.
\begin{itemize}
\item[(a)] $\phi(p)$ is of type $2k-1$, and if $k \geq 2$ then $\phi_L(p)$ and $\phi_R(p)$ are of type $2k-2$ and $\phi_M(p)$ is of type $2k-3$.
\item[(b)] $\phi(p)$ and $\phi(q)$ are a complementary pair of type $2k-1$.
\item[(c)] $\phi_L(p)$ and $\phi_L(q)$ are a complementary pair of type $2k-2$.
\item[(d)] $\phi_R(p)$ and $\phi_R(q)$ are a complementary pair of type $2k-2$.
\item[(e)] $\phi_M(p)$ and $\phi_M(q)$ are a complementary pair of type $2k-3$.
\end{itemize}
\end{proposition}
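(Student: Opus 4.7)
The plan is to apply Proposition \ref{CalculateTheFwdImage} position-by-position to $p' = \phi(p)$ and $q' = \phi(q)$, and to observe that the type $k$ structure forces the formulas at the first and last blocks to synchronize. Statements (c), (d), (e), and the $\phi_L, \phi_R, \phi_M$ clauses of (a) will then follow from the corresponding statement for $\phi$ via a restriction lemma: if $r$ is a subpermutation of type $m$ then $L(r), R(r)$ are of type $m-1$ and $M(r)$ is of type $m-2$, and the analogous claim holds for complementary pairs. This lemma rests on a single observation: the renumbering induced by removing a boundary entry $v$ subtracts $0$ or $1$ from each remaining entry, and the indicator ``exceeds $v$'' is unchanged when $v$ is replaced by $v \pm 1$, because no integer strictly separates two consecutive integers. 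This prevents the differences $\alpha_i - \beta_i = \varepsilon$ from being perturbed, even though the last entries of $p$ and $q$ in a complementary pair differ by exactly $\varepsilon$.

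To prove that $\phi(p)$ is of type $2k-1$, I would pair the first $2k-1$ entries of $p'$ with the last $2k-1$. For even-indexed pairs Proposition \ref{CalculateTheFwdImage} gives
\[
p'_{2i} - p'_{2(n-k+1+i)} = p_i - p_{n-k+1+i} = \alpha_{i+1} - \beta_{i+1} = \varepsilon
\]
for $i = 0, \dots, k-1$. For odd-indexed pairs Proposition \ref{CalculateTheFwdImage} selects one of four formulas according to the signs of $p_i - p_{i+1}$ and $p_i - p_n$. For $i \leq k-2$ both $p_i, p_{i+1}$ lie in the $\alpha$-block and the matching $p_{n-k+1+i}, p_{n-k+2+i}$ in the $\beta$-block, so the shift by $\varepsilon$ preserves the first sign. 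The second sign is the delicate case, but distinctness of the permutation entries forces $\beta_{i+1} \notin \{\beta_k - \varepsilon, \beta_k\}$, precisely the only obstruction to the equivalence of $\alpha_{i+1} < \beta_k$ and $\beta_{i+1} < \beta_k$. Both entries therefore fall in the same subcase, and the difference of the odd-indexed $p'$-values is again $\varepsilon$.

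For (b), the same analysis compares $p'$ at the front with $q'$ at the back. Complementary pairs have the same form, so $|u|_1$ is common to both formulas; the even indices match exactly as $p'_{2i} = \alpha_{i+1} + |u|_1 = q'_{2(n-k+1+i)}$. For odd indices the threshold entries are now $p_n = \beta_k$ versus $q_n = \alpha_k = \beta_k + \varepsilon$, but the same distinctness argument rules out the only possible disagreement, so both subcases coincide and the odd-indexed values match. Exchanging the roles of $p$ and $q$ yields the matching in the other direction, establishing that $\phi(p), \phi(q)$ form a complementary pair of type $2k-1$.

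The main obstacle throughout is the subcase bookkeeping in Proposition \ref{CalculateTheFwdImage}: one must verify that at every paired position both formulas select the same case, despite the threshold shifting by $\varepsilon$ between paired positions in (a) or between $p_n$ and $q_n$ in (b). All of these verifications collapse to the single observation that distinct permutation entries cannot fall strictly between two consecutive integers. Once that is established, the four formulas deliver the desired differences automatically, and the restriction lemma propagates the result from $\phi$ to $\phi_L, \phi_R, \phi_M$.
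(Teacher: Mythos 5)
Your overall route is the same as the paper's (apply Proposition \ref{CalculateTheFwdImage} entrywise to get the statement for $\phi$, then pass to $\phi_L,\phi_R,\phi_M$ by a restriction argument), but there is a genuine gap in part (b). Being a complementary pair of type $2k-1$ requires more than matching the front block of $p'$ with the back block of $q'$ and vice versa: the two middle blocks must be \emph{identical}, i.e.\ $p'_j=q'_j$ for $2k-1\le j\le 2n-2k+1$, and you never address this. Most of the middle is harmless (it comes from $p_{k+i}=q_{k+i}=\lambda_{i+1}$, and your consecutive-integers observation handles the threshold shift from $p_n=\beta_k$ to $q_n=\alpha_k$), but the position $j=2k-1$ is exactly the place where your blanket claim ``both subcases coincide'' is false. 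There $p'_{2k-1}$ is computed from $p_{k-1}=\alpha_k$ with threshold $p_n=\beta_k$, while $q'_{2k-1}$ is computed from $q_{k-1}=\beta_k$ with threshold $q_n=\alpha_k$; entry and threshold are consecutive integers in each case, so distinctness gives you nothing, the comparisons come out \emph{opposite} ($p_{k-1}>p_n$ iff $q_{k-1}<q_n$), and the two applications of Proposition \ref{CalculateTheFwdImage} land in different subcases. Equality $p'_{2k-1}=q'_{2k-1}$ holds only because the additive constants $n+1$ versus $n$ (resp.\ $-n$ versus $-(n+1)$) compensate the offset $p_{k-1}=q_{k-1}+\varepsilon$; this is precisely the separate displayed computation in the paper's proof of (b), and it cannot be subsumed under your ``single observation.''

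The same phenomenon is glossed over in your restriction lemma for complementary pairs, which you need for (c), (d), (e) and for the $k=1$ degenerate cases (where ``type $0$'' or ``type $-1$'' means the restrictions are equal). When you delete the last entry, the element $\alpha'_{2k-1}$ of $p'$ that migrates into the middle block must be shown equal, after renumbering, to the migrating element $\beta'_{2k-1}$ of $q'$; here the deleted values are $\beta'_{2k-1}$ and $\alpha'_{2k-1}$ respectively, so the ``indicator unchanged'' principle does not apply (the surviving entry of one permutation equals the deleted entry of the other), and again a $\pm 1$ compensation between the value shift and the renumbering is what saves you --- this is the paper's explicit check that $L(p')_{2k-2}=L(q')_{2k-2}$, and its $k=1$ computations showing $\phi_L(p)=\phi_L(q)$, $\phi_R(p)=\phi_R(q)$, $\phi_M(p)=\phi_M(q)$. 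Parts (a) and the front/back matching in (b) are fine as you describe them; to complete the proof you must add the middle-block verification at $j=2k-1$ and the boundary-entry computation in the restriction step.
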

%--------------
\begin{proof}
Since $p$ and $q$ are a complementary pair of type $k$ they can be decomposed as
\begin{align*}
p &= \pi_T[a,a+n] = [\alpha_1 \cdots \alpha_k \lambda_1 \cdots \lambda_l \beta_1 \cdots \beta_k] \\ 
q &= \pi_T[b,b+n] = [\beta_1 \cdots \beta_k \lambda_1 \cdots \lambda_l \alpha_1 \cdots \alpha_k]  
\end{align*}
and for $\varepsilon \in \{-1, 1 \}$, $\alpha_i = \beta_i + \varepsilon$ for each $i = 1, 2, \ldots, k$.  For the values of $k$ and $l$, $2k+l = n+1$ and $4k+2l-1=2n+1$.

%-------------------------
%      Part (a), stuff for p'
%-------------------------
$\vspace{1.0ex}$

$\textbf{(a)}$  The first thing to show is that $\phi(p)$ is of type $2k-1$.

For $i \in \{0, 1, \ldots, k-1  \}$ we have $p_i = p_{n-(k-1)+i}+\varepsilon$, so by Proposition $\ref{CalculateTheFwdImage}$:
$$p'_{2i} = p'_{2(n-(k-1)+i)} +\varepsilon $$
For $i \in \{0, 1, \ldots, k-2 \}$, $p_i < p_{i+1}$ if and only if $p_{n-(k-1)+i} < p_{n-(k-1)+i+1}$, and $p_i < p_n$ if and only if $p_{n-(k-1)+i} < p_n$ since $p_i$ and $p_{n-(k-1)+i}$ are consecutive values.  By Proposition $\ref{CalculateTheFwdImage}$:
$$ p'_{2i+1} = p'_{2(n-(k-1)+i)+1} +\varepsilon $$
So for each $i \in \{0, 1, \ldots, 2k-2 \}$: $p'_i = p'_{2n-2k+2+i}+\varepsilon$, and $\phi(p)$ can be decomposed as
$$\phi(p) = \pi_T[2a,2a+2n] = [\alpha'_1 \cdots \alpha'_{2k-1} \lambda'_1 \cdots \lambda'_{2l+1} \beta'_1 \cdots \beta'_{2k-1}],$$
where $\alpha'_i = \beta'_i + \varepsilon$, so $\phi(p)=p'$ is of type $2k-1$.

Next, suppose that $k \geq 2$ so $2k-1 \geq 3$, we show that $\phi_L(p) = L(p')$ and $\phi_R(p) = R(p')$ are of type $2k-2$ and $\phi_M(p)$ is of type $2k-3$.

Let $i \in \{0, 1, \ldots, 2k-3\}$, and consider $\phi_L(p) = L(p')$.  Since $p'_i$ and $p'_{2n-2k+2+i}$ are consecutive values, $p'_i < p'_{2n}$ if and only if $p'_{2n-2k+2+i} < p'_{2n}$.  So if $L(p')_i = p'_i$ then $L(p')_{2n-2k+2+i} = p'_{2n-2k+2+i}$, and if $L(p')_i = p'_i-1$ then $L(p')_{2n-2k+2+i} = p'_{2n-2k+2+i}-1$.  In either case, $L(p')_i = L(p')_{2n-2k+2+i} + \varepsilon$ and there is a decomposition 
$$\phi_L(p) = \pi_T[2a,2a+2n-1] = [\alpha'_1 \cdots \alpha'_{2k-2} \lambda'_1 \cdots \lambda'_{2l+2} \beta'_1 \cdots \beta'_{2k-2}],$$
and $\phi_L(p)$ is of type $2k-2$.

Now consider $\phi_R(p) = R(p')$.  Since $p'_{i+1}$ and $p'_{2n-2k+2+i+1}$ are consecutive values, $p'_{i+1} < p'_0$ if and only if $p'_{2n-2k+2+i+1} < p'_0$.  So if $R(p')_i = p'_{i+1}$ then $R(p')_{2n-2k+2+i} = p'_{2n-2k+2+i+1}$, and if $R(p')_i = p'_{i+1}-1$ then $R(p')_{2n-2k+2+i} = p'_{2n-2k+2+i+1}-1$.  In either case, $R(p')_i = R(p')_{2n-2k+2+i} + \varepsilon$ and there is a decomposition 
$$\phi_R(p) = \pi_T[2a+1,2a+2n] = [\alpha'_1 \cdots \alpha'_{2k-2} \lambda'_1 \cdots \lambda'_{2l+2} \beta'_1 \cdots \beta'_{2k-2}],$$
and $\phi_R(p)$ is of type $2k-2$.

Now consider $\phi_M(p)$, and let $i \in \{0, 1, \ldots, 2k-4\}$.  Since $R(p')_i$ and $R(p')_{2n-2k+1+i}$ are consecutive values; $R(p')_i < R(p')_{2n-1}$ if and only if $R(p')_{2n-2k+1+i} < R(p')_{2n-1}$.  So if $M(p')_i = L(R(p'))_i = R(p')_i$ then $M(p')_{2n-2k+1+i} = L(R(p'))_{2n-2k+1+i} = R(p')_{2n-2k+1+i}$, and if $M(p')_i = L(R(p'))_i = R(p')_i-1$ then $M(p')_{2n-2k+1+i} = L(R(p'))_{2n-2k+1+i} = R(p')_{2n-2k+1+i}-1$.  In either case, $M(p')_i = M(p')_{2n-2k+2+i} + \varepsilon$ and there is a decomposition 
$$\phi_M(p) = \pi_T[2a+1,2a+2n-1] = [\alpha'_1 \cdots \alpha'_{2k-3} \lambda'_1 \cdots \lambda'_{2l+3} \beta'_1 \cdots \beta'_{2k-3}],$$
and $\phi_R(p)$ is of type $2k-3$.

%-------------------------
%      Part (b), compare p' and q'
%-------------------------
$\vspace{1.0ex}$

$\textbf{(b)}$  From (a), $\phi(q) = q'$ is of type $2k-1$.  Since $p$ and $q$ are a complementary pair of type $k$, $p_i = p_{n-k+1+i} + \varepsilon = q_i + \varepsilon = q_{n-k+1+i}$ for each $i \in \{0, 1, \ldots, k-1 \}$, and $p_{k+i} = q_{k+i}$ for each $i \in \{0, 1, \ldots, l-1 \}$.  Thus for $i \in \{0, 1, \ldots, k-1 \}$:
\begin{align*}
p'_{2i} &= p'_{2(n-k+1 + i)} + \varepsilon \\
p'_{2i} &= q'_{2(n-k+1 + i)} \\
q'_{2(n-k+1+i)} &= q'_{2i}+ \varepsilon
\end{align*}
For $i \in \{0, 1, \ldots, k-2 \}$:
\begin{align*}
p'_{2i+1} &= p'_{2(n-k+1+i)+1} + \varepsilon \\
p'_{2i+1} &= q'_{2(n-k+1+i)+1} \\
q'_{2(n-k+1+i)+1} &= q'_{2i+1}+ \varepsilon
\end{align*}

We know that $p_{k-1} = p_n + \varepsilon = q_{k-1} + \varepsilon = q_n$, so $p_{k-1} > p_n$ and $q_{k-1} < q_n$.  Thus if $p_{k-1} < p_k$
$$ p'_{2k-1} =  p_{k-1} + \abs{u}_1 + n = q_{k-1} + 1 + \abs{u}_1 + n = q_{k-1} + 1 + \abs{u}_1 + (n+1) = q'_{2k-1} $$
and if $p_{k-1} > p_k$
$$ p'_{2k-1} =  p_{k-1} + \abs{u}_1 - (n+1) = q_{k-1} + 1 + \abs{u}_1 - (n+1) = q_{k-1} + \abs{u}_1 - n = q'_{2k-1}. $$

By Proposition $\ref{CalculateTheFwdImage}$, since $p_{k+i} = q_{k+i}$ for each $i \in \{0, 1, \ldots, l-1 \}$, 
\begin{align*}
p'_{2(k+i)} &= q'_{2(k+i)} \\
p'_{2(k+i)+1} &= q'_{2(k+i)+1} 
\end{align*}

Thus there are decompositions of $\phi(p) = p'$ and $\phi(q) = q'$ so that
$$\phi(p) = \pi_T[2a,2a+2n] = [\alpha'_1 \cdots \alpha'_{2k-1} \lambda'_1 \cdots \lambda'_{2l+1} \beta'_1 \cdots \beta'_{2k-1}],$$
$$\phi(q) = \pi_T[2b,2b+2n] = [\beta'_1 \cdots \beta'_{2k-1} \lambda'_1 \cdots \lambda'_{2l+1} \alpha'_1 \cdots \alpha'_{2k-1}],$$
where $\alpha'_i = \beta'_i + \varepsilon$.  Therefore $\phi(p)=p'$ and $\phi(q)=q'$ are a complementary pair of type $2k-1$.

%-------------------------
%      Part (c), \phi_L
%-------------------------
$\vspace{1.0ex}$

%-----
% k>1
%-----
$\textbf{(c)}$  From (b), $\phi(p)=p'$ and $\phi(q)=q'$ are a complementary pair of type $2k-1$.  Suppose $k \geq 2$ and so $2k-3 \geq 1$, and let $i \in \{0, 1, \ldots, 2k-3\}$, then $p'_i = q'_i + \varepsilon = p'_{2n-2k+2 +i} + \varepsilon = q'_{2n-2k+2+i}$.  Thus $p'_i$ and $p'_{2n-2k+2 +i}$ are consecutive values, as are $q'_i$ and $q'_{2n-2k+2 +i}$, also $p'_{2n} < p'_i$ if and only if $p'_{2n} < p'_{2n-2k+2 +i}$, and
$$p'_{2n} < p'_i \text{ and } p'_{2n} < p'_{2n-2k+2 +i} \hspace{1.5ex} \Longleftrightarrow \hspace{1.5ex} q'_{2n} < q'_i \text{ and } q'_{2n} < q'_{2n-2k+2 +i}.$$
If $L(p')_i = p'_i-1$ or $L(p')_i = p'_i$, we have $L(q')_i = q'_i-1$ or $L(q')_i = q'_i$ (respectively), and $L(p')_i = L(q')_i+ \varepsilon = L(p')_{2n-2k+2 +i} + \varepsilon = L(q')_{2n-2k+2 +i}$.

Now let $i \in \{0, 1, \ldots, 2l \}$, so $p'_{2k-1+i} = q'_{2k-1+i}$.  Thus $p'_{2n} < p'_{2k-1+i}$ if and only if $q'_{2n} < q'_{2k-1+i}$, and so we have $L(p')_{2k-1+i} = L(q')_{2k-1+i}$.

Then $p'_{2k-2} = q'_{2k-2} + \varepsilon = p'_{2n} + \varepsilon = q'_{2n}$, so $p'_{2k-2} > p'_{2n}$ if and only if $q'_{2k-2} < q'_{2n}$.  If $p'_{2k-2} > p'_{2n}$ and $q'_{2k-2} < q'_{2n}$, then $p'_{2k-2} = q'_{2k-2} + 1 = p'_{2n} + 1 = q'_{2n}$ so
$$ L(p')_{2k-2} = p'_{2k-2} - 1 = q'_{2k-2} =  L(q')_{2k-2}.,$$
If $p'_{2k-2} < p'_{2n}$ and $q'_{2k-2} > q'_{2n}$, then $p'_{2k-2} = q'_{2k-2} - 1 = p'_{2n} - 1 = q'_{2n}$ so
$$ L(p')_{2k-2} = p'_{2k-2} = q'_{2k-2} - 1 =  L(q')_{2k-2}.$$
In either case, $ L(p')_{2k-2} =  L(q')_{2k-2}$.  Thus there are decompositions of $\phi_L(p) = L(p')$ and $\phi_L(q) = L(q')$ so that
$$\phi_L(p) = \pi_T[2a,2a+2n-1] = [\alpha'_1 \cdots \alpha'_{2k-2} \lambda'_1 \cdots \lambda'_{2l+2} \beta'_1 \cdots \beta'_{2k-2}],$$
$$\phi_L(q) = \pi_T[2b,2b+2n-1] = [\beta'_1 \cdots \beta'_{2k-2} \lambda'_1 \cdots \lambda'_{2l+2} \alpha'_1 \cdots \alpha'_{2k-2}],$$
where $\alpha'_i = \beta'_i + \varepsilon$.  Therefore $\phi_L(p)$ and $\phi_L(q)$ are a complementary pair of type $2k-2$.

%-----
% k=1
%-----

Now suppose that $k=1$ and so $2k-1 = 1$.  Then $p'_0 = q'_0 + \varepsilon = p'_{2n} + \varepsilon= q'_{2n}$ and $p'_i = q'_i$ for $i = 1,2, \ldots, 2n-1$.  If $p'_0 > p'_{2n}$ and $q'_0 < q'_{2n}$, then $p'_0 = q'_0 + 1 = p'_{2n} + 1 = q'_{2n}$ so
$$ L(p')_0 = p'_0 - 1 = q'_0 =  L(q')_0.$$
If $p'_0 < p'_{2n}$ and $q'_0 > q'_{2n}$, then $p'_0 = q'_0 - 1 = p'_{2n} - 1 = q'_{2n}$ so
$$ L(p')_0 = p'_0 = q'_0 - 1 =  L(q')_0.$$
In either case, $ L(p')_0 =  L(q')_0$.  Then for each $i \in \{1, 2, \ldots, 2n-1 \}$, $p'_i = q'_i$, and $p'_{2n} < p'_i$ if and only if $q'_{2n} < q'_i$ so $L(p')_i = L(q')_i$.  Therefore, if $k=1$ then $\phi_L(p) = \phi_L(q)$.

%-------------------------
%      Part (d), \phi_R
%-------------------------
$\vspace{1.0ex}$
%-----
% k>1
%-----

$\textbf{(d)}$  From (b), $\phi(p)=p'$ and $\phi(q)=q'$ are a complementary pair of type $2k-1$.  Suppose $k \geq 2$ and so $2k-3 \geq 1$, and let $i \in \{0, 1, \ldots, 2k-3\}$, then $p'_{i+1} = q'_{i+1} + \varepsilon = p'_{2n-2k+2 +i+1} + \varepsilon = q'_{2n-2k+2+i+1}$.  Thus $p'_{i+1}$ and $p'_{2n-2k+2 +i+1}$ are consecutive values, as are $q'_{i+1}$ and $q'_{2n-2k+2 +i+1}$, also $p'_{2n} < p'_{i+1}$ if and only if $p'_{2n} < p'_{2n-2k+2 +i+1}$, and
$$p'_0 < p'_{i+1} \text{ and } p'_0 < p'_{2n-2k+2 +i+1} \hspace{1.5ex} \Longleftrightarrow \hspace{1.5ex} q'_0 < q'_{i+1} \text{ and } q'_0 < q'_{2n-2k+2 +i+1}.$$
If $R(p')_i = p'_{i+1}-1$ or $R(p')_i = p'_{i+1}$, we have $R(q')_i = q'_{i+1}-1$ or $R(q')_i = q'_{i+1}$ (respectively), and $R(p')_i = R(q')_i+ \varepsilon = R(p')_{2n-2k+2 +i} + \varepsilon = R(q')_{2n-2k+2 +i}$.

Now let $i \in \{0, 1, \ldots, 2l \}$, so $p'_{2k-1+i} = q'_{2k-1+i}$.  Thus $p'_0 < p'_{2k-1+i}$ if and only if $q'_0 < q'_{2k-1+i}$, and so we have $R(p')_{2k-1+i-1} = R(q')_{2k-1+i-1}$.

Then $p'_0 = q'_0 + \varepsilon = p'_{2n-2k+2} + \varepsilon = q'_{2n-2k+2}$, so $p'_{2n-2k+2} > p'_0$ if and only if $q'_{2n-2k+2} < q'_0$.  If $p'_{2n-2k+2} > p'_0$ and $q'_{2n-2k+2} < q'_0$, then $p'_{2n-2k+2} = q'_{2n-2k+2} + 1 = p'_0 + 1 = q'_0$ so
$$ R(p')_{2n-2k+1} = p'_{2n-2k+2} - 1 = q'_{2n-2k+2} =  R(q')_{2n-2k+1}.$$
If $p'_{2n-2k+2} < p'_0$ and $q'_{2n-2k+2} > q'_0$, then $p'_{2n-2k+2} = q'_{2n-2k+2} - 1 = p'_0 - 1 = q'_0$ so
$$ R(p')_{2n-2k+1} = p'_{2n-2k+2} = q'_{2n-2k+2} - 1 =  R(q')_{2n-2k+1}.$$
In either case, $ R(p')_{2n-2k+1} =  R(q')_{2n-2k+1}$.  Thus there are decompositions of $\phi_R(p) = R(p')$ and $\phi_R(q) = R(q')$ so that
$$\phi_L(p) = \pi_T[2a+1,2a+2n] = [\alpha'_1 \cdots \alpha'_{2k-2} \lambda'_1 \cdots \lambda'_{2l+2} \beta'_1 \cdots \beta'_{2k-2}],$$
$$\phi_L(q) = \pi_T[2b+1,2b+2n] = [\beta'_1 \cdots \beta'_{2k-2} \lambda'_1 \cdots \lambda'_{2l+2} \alpha'_1 \cdots \alpha'_{2k-2}],$$
where $\alpha'_i = \beta'_i + \varepsilon$.  Therefore $\phi_R(p)$ and $\phi_R(q)$ are a complementary pair of type $2k-2$.

%-----
% k=1
%-----

Now suppose that $k=1$ and so $2k-1 = 1$.  Then $p'_0 = q'_0 + \varepsilon = p'_{2n} + \varepsilon= q'_{2n}$ and $p'_i = q'_i$ for $i = 1,2, \ldots, 2n-1$.  If $p'_0 > p'_{2n}$ and $q'_0 < q'_{2n}$, then $p'_0 = q'_0 + 1 = p'_{2n} + 1 = q'_{2n}$ so
$$ R(p')_{2n-1} = p'_{2n} = q'_{2n}-1 =  R(q')_{2n-1}.$$
If $p'_0 < p'_{2n}$ and $q'_0 > q'_{2n}$, then $p'_0 = q'_0 - 1 = p'_{2n} - 1 = q'_{2n}$ so
$$ R(p')_{2n-1} = p'_{2n} - 1 = q'_{2n} =  R(q')_{2n-1}.$$
In either case, $ R(p')_0 =  R(q')_0$.  Then for each $i \in \{1, 2, \ldots, 2n-1 \}$, $p'_i = q'_i$, and $p'_0 < p'_i$ if and only if $q'_0 < q'_i$ so $R(p')_{i-1} = R(q')_{i-1}$.  Therefore, if $k=1$ then $\phi_R(p) = \phi_R(q)$.

%-------------------------
%      Part (e), \pfi_M
%-------------------------
$\vspace{1.0ex}$

$\textbf{(e)}$  From (c), $\phi_R(p)=R(p')$ and $\phi_R(q)=R(q')$ are a complementary pair of type $2k-2$.  Suppose $k \geq 2$ and so $2k-4 \geq 0$, and let $i \in \{0,  \ldots, 2k-4\}$, then $R(p')_i = R(q')_i + \varepsilon = R(p')_{2n-2k+3 +i} + \varepsilon = R(q')_{2n-2k+3+i}$.  Thus $R(p')_i$ and $R(p')_{2n-2k+3 +i}$ are consecutive values, as are $R(q')_i$ and $R(q')_{2n-2k+3 +i}$, and $R(p')_{2n-1} < R(p')_i$ if and only if $R(p')_{2n-1} < R(p')_{2n-2k+3 +i}$, and
$$R(p')_{2n-1} < R(p')_i \text{ and } R(p')_{2n-1} < R(p')_{2n-2k+3 +i} \hspace{1.5ex} \Longleftrightarrow \hspace{1.5ex} R(q')_{2n-1} < R(q')_i \text{ and } R(q')_{2n-1} < R(q')_{2n-2k+3 +i}.$$
If $L(R(p'))_i = R(p')_i-1$ or $L(R(p'))_i = R(p')_i$, we have $L(R(q'))_i = R(q')_i-1$ or $L(R(q'))_i = R(q')_i$ (respectively), and $L(R(p'))_i = L(R(q'))_i+ \varepsilon = L(R(p'))_{2n-2k+2 +i} + \varepsilon = L(R(q'))_{2n-2k+2 +i}$.

Now let $i \in \{0, 1, \ldots, 2l+1 \}$, so $R(p')_{2k-2+i} = R(q')_{2k-2+i}$.  Thus $R(p')_{2n-1} < R(p')_{2k-2+i}$ if and only if $R(q')_{2n-1} < R(q')_{2k-2+i}$, and so we have $L(R(p'))_{2k-1+i} = L(R(q'))_{2k-1+i}$.

Then $R(p')_{2k-3} = R(q')_{2k-3} + \varepsilon = R(p')_{2n-1} + \varepsilon = R(q')_{2n-1}$, so $R(p')_{2k-3} > R(p')_{2n-1}$ if and only if $R(q')_{2k-3} < R(q')_{2n-1}$.  If $R(p')_{2k-3} > R(p')_{2n-1}$ and $R(q')_{2k-3} < R(q')_{2n-1}$, then $R(p')_{2k-3} = R(q')_{2k-3} + 1 = R(p')_{2n-1} + 1 = R(q')_{2n-1}$ so
$$ L(R(p'))_{2k-3} = R(p')_{2k-3} - 1 = R(q')_{2k-3} =  L(R(q'))_{2k-3}.$$
If $R(p')_{2k-3} < R(p')_{2n-1}$ and $R(q')_{2k-3} > R(q')_{2n-1}$, then $R(p')_{2k-3} = R(q')_{2k-3} - 1 = R(p')_{2n-1} - 1 = R(q')_{2n-1}$
$$ L(R(p'))_{2k-3} = R(p')_{2k-3} = R(q')_{2k-3} - 1 =  L(R(q'))_{2k-2-1}.$$
In either case, $ L(R(p'))_{2k-3} =  L(q')_{2k-3}$.  Thus there are decompositions of $\phi_M(p) = L(R(p'))$ and $\phi_M(q) = L(R(q'))$ so that
$$\phi_M(p) = \pi_T[2a-1,2a+2n-1] = [\alpha'_1 \cdots \alpha'_{2k-3} \lambda'_1 \cdots \lambda'_{2l+3} \beta'_1 \cdots \beta'_{2k-3}],$$
$$\phi_M(q) = \pi_T[2b-1,2b+2n-1] = [\beta'_1 \cdots \beta'_{2k-3} \lambda'_1 \cdots \lambda'_{2l+3} \alpha'_1 \cdots \alpha'_{2k-3}],$$
where $\alpha'_i = \beta'_i + \varepsilon$.  Therefore $\phi_M(p)$ and $\phi_M(q)$ are a complementary pair of type $2k-3$.

%-----
% k=1
%-----

Now suppose that $k=1$ and so $2k-1 = 1$.  Then $\phi_R(p) = \phi_R(q)$, and thus $L(R(p')) = L(R(q'))$.  Therefore, if $k=1$ then $\phi_M(p) = \phi_M(q)$.
$\qed$
\end{proof}
%--------------

%--------------
\begin{theorem}
\label{SameFormIFFCompPair}
Let $p$ and $q$ be distinct subpermutations of $\pi_T$.  Then $p$ and $q$ have the same form if and only if $p$ and $q$ are a complementary pair of type $k$, for some $k \geq 1$.
\end{theorem}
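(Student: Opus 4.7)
The plan is to split the proof into an easy forward direction and an inductive converse.

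For the $(\Leftarrow)$ direction, I would verify by direct inspection that a complementary pair of type $k$ has the same form. Write $p = [\alpha_1 \cdots \alpha_k \lambda_1 \cdots \lambda_l \beta_1 \cdots \beta_k]$ and $q = [\beta_1 \cdots \beta_k \lambda_1 \cdots \lambda_l \alpha_1 \cdots \alpha_k]$ with $\alpha_i = \beta_i + \varepsilon$ for a common $\varepsilon \in \{-1,1\}$, and check each consecutive comparison. Inside the outer blocks, $\alpha_i < \alpha_{i+1}$ iff $\beta_i < \beta_{i+1}$, because the shift by $\varepsilon$ is the same on both sides. At the boundary positions between an outer block and the $\lambda$-block, the candidate neighbors are $\alpha_i$ in $p$ and $\beta_i = \alpha_i - \varepsilon$ in $q$; since $\alpha_i$ and $\beta_i$ are consecutive integers and $\lambda_j$ is distinct from both (both $p$ and $q$ are permutations of $\{1,\dots,n+1\}$), $\lambda_j$ lies strictly on the same side of both, so the two comparisons agree. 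Inside the $\lambda$-block the values coincide identically.

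For the converse $(\Rightarrow)$, I would use induction on the length $n$ of $p$ and $q$. The base case handles all $n$ below the threshold at which the disjointness $Perm_{ev}(n)\cap Perm_{odd}(n)=\emptyset$ (established in Section \ref{ThueMorsePermutation} from the fact that every factor of $T$ of length at least $5$ contains $00$ or $11$) becomes available; the claim is checked directly from the low-order subpermutation tables in the appendix. For the inductive step, same form forces $p$ and $q$ to start at positions of the same parity, giving four cases indexed by the parity of the common starting position and the parity of $n$. I would treat the typical case (both starts even, $n = 2m+1$ odd) in detail. Writing $p = \pi_T[2a', 2a'+2m]$ and $q = \pi_T[2b', 2b'+2m]$, the common form is $T[2a',2a'+2m-1] = \mu_T(T[a',a'+m-1])$, and injectivity of $\mu_T$ on letters gives $T[a',a'+m-1] = T[b',b'+m-1]$, so $\tilde p := \pi_T[a',a'+m]$ and $\tilde q := \pi_T[b',b'+m]$ have the same form. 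Since $\phi:Perm(m+1) \to Perm_{ev}(2m+1)$ is a bijection (Lemma \ref{pISqIFFppISqp}), $\tilde p \ne \tilde q$, and the inductive hypothesis produces a $k \ge 1$ with $\tilde p, \tilde q$ a complementary pair of type $k$; Proposition \ref{ImageOfTypeK}(b) then gives $p$ and $q$ as a complementary pair of type $2k-1$.

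The remaining three cases go through $\phi_L$, $\phi_R$, or $\phi_M$ in place of $\phi$, with analogous form decompositions; the odd-starting cases require slightly different endpoint bookkeeping, since the form begins with $\overline{T_{a'}}$ and ends with either $T_{a'+m}$ or $\overline{T_{a'+m-1}}$, but injectivity of $\mu_T$ again recovers the form of $\tilde p$ and $\tilde q$. The main obstacle is that $\phi_L$, $\phi_R$, $\phi_M$ are not injective: the $k=1$ clauses of Proposition \ref{ImageOfTypeK}(c), (d), (e) show that a reduced-level complementary pair of type $1$ collapses to a single subpermutation under each of these maps. Hence in those three cases the hypothesis $p \ne q$ additionally rules out $k=1$ at the reduced level, forcing $k \ge 2$; this is exactly the range in which parts (c), (d), (e) of Proposition \ref{ImageOfTypeK} produce a genuine complementary pair (of type $2k-2$ or $2k-3$) for $p$ and $q$. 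Aligning this ``type-$1$ collapse versus type-$\ge 2$'' dichotomy with the four-case induction is the delicate step; everything else reduces to the computations already recorded in Propositions \ref{CalculateTheFwdImage} and \ref{ImageOfTypeK}.
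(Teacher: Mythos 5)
Your proposal is correct and follows essentially the same route as the paper: the forward implication by direct comparison of consecutive entries inside and across the blocks, and the converse by induction on length using the parity split of starting positions, reduction through $\phi$, $\phi_L$, $\phi_R$, $\phi_M$, Proposition \ref{ImageOfTypeK}, and the exclusion of a type-$1$ pair at the reduced level exactly when the non-injective maps are used. The endpoint bookkeeping you flag for the even-length and odd-start cases is handled in the paper precisely as you suggest, by recovering the missing letter from the first or last letter of the corresponding $\mu_T$-image.
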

%--------------
\begin{proof}
First, suppose that $p$ and $q$ are a complementary pair of type $k$, for some $k \geq 1$.  So there are decompositions:
\begin{align*}
p &= \pi_T[a,a+n] = [\alpha_1 \cdots \alpha_k \lambda_1 \cdots \lambda_l \beta_1 \cdots \beta_k] \\ 
q &= \pi_T[b,b+n] = [\beta_1 \cdots \beta_k \lambda_1 \cdots \lambda_l \alpha_1 \cdots \alpha_k]  
\end{align*}
so that for $\varepsilon \in \{-1, 1 \}$, $\alpha_i = \beta_i + \varepsilon$ for each $i \in \{1, 2, \ldots, k \}$.  

For each $i \in \{0, 1, \ldots, k-2 \}$, $p_i$ and $p_{n-k+1+i}$ are consecutive values, as are $q_i$ and $q_{n-k+1+i}$, so 
$$ p_i < p_{i+1} \text{ and } p_{n-k+1+i} < p_{n-k+1+i+1} \hspace{1.5ex} \Longleftrightarrow \hspace{1.5ex} q_i < q_{i+1} \text{ and } q_{n-k+1+i} < q_{n-k+1+i+1}.$$
Since $p_{k-1} = q_{k-1} + \varepsilon$, $p_{k+l} + \varepsilon = q_{k+l}$, $p_k = q_k$, and $p_{k+l-1} = q_{k+l-1}$:
\begin{align*}
p_{k-1} < p_k \hspace{1.5ex} &\Longleftrightarrow \hspace{1.5ex} q_{k-1} < q_k \\ 
p_{k+l-1} < p_{k+l} \hspace{1.5ex} &\Longleftrightarrow \hspace{1.5ex} q_{k+l-1} < q_{k+l}. 
\end{align*}
For each $i \in \{0, 1, \ldots, l-2 \}$, $p_{k+i} = q_{k+i}$, so
$$ p_{k+i} < p_{k+i+1} \hspace{1.5ex} \Longleftrightarrow \hspace{1.5ex} q_{k+i} < q_{k+i+1}. $$
Therefore $p_i < p_{i+1}$ if and only if $q_i < q_{i+1}$ for each $i \in \{0, 1, \ldots, n-1 \}$, so $p$ and $q$ have the same form.

%-------------------------
%    form => comp pair    
%-------------------------
$\vspace{1.0ex}$

To show that distinct subpermutations with the same form are a complementary pair of type $k$, for some $k \geq 1$, an induction argument will be used.  The subpermutations of lengths 2 through 9 are listed in Appendix $\ref{SecTheSubperms}$, along with the form of the subpermutations.  It can be seen that distinct subpermutations with the same form are a complementary pair of type $k$, for some $k \geq 1$.  

Assume that $n \geq 9$ and that the theorem is true for all subpermutations of length at most $n$.  Let $p'$ and $q'$ be distinct subpermutations of length $n+1$ with the same form, so $p'_i < p_{i+1}$ if and only if $q'_i < q'_{i+1}$ for each $i = 0, 1, \ldots, n-1$.  

Then 
$$p', q' \in Perm_{ev}(n+1) \hspace{3.0ex} \text{ or } \hspace{3.0ex} p', q' \in Perm_{odd}(n+1).$$
If, without loss of generality, $p' \in Perm_{ev}(n+1)$ and $q' \in Perm_{odd}(n+1)$, then $p' = \pi_T[2a,2a+n]$ and $q' = \pi_T[2b+1,2b+n+1]$, so $T[2a,2a+n-1] = T[2b+1,2b+n]$.  Since $n \geq 9$, $T[2a,2a+n-1]$ will contain either 00 or 11, so there is some $c$ so that $T[2a+2c+1,2a+2c+2]$ is 00 or 11.  Then also, $T[2b+1 + 2c+1,2b+1 + 2c+2] = T[2b+2c+2,2b+2c+3]$ must be the same as $T[2a+2c+1,2a+2c+2]$, but $T[2b+2c+2,2b+2c+3]$ is either $\mu_T(0) = 01$ or $\mu_T(1) = 10$, so $T[2b+2c+2,2b+2c+3] \neq T[2a+2c+1,2a+2c+2]$.  Therefore, either $p',q' \in Perm_{ev}(n+1)$ or $p',q' \in Perm_{odd}(n+1)$

Thus one of the 4 following cases must hold:
\begin{enumerate}
\item $p',q' \in Perm_{ev}(n+1)$ and $n+1$ is odd
\item $p',q' \in Perm_{ev}(n+1)$ and $n+1$ is even
\item $p',q' \in Perm_{odd}(n+1)$and $n+1$ is even
\item $p',q' \in Perm_{odd}(n+1)$and $n+1$ is odd
\end{enumerate}

%-------------------------
%    1. [2a,2a+2m]
%-------------------------
$\vspace{0.5ex}$

$\textbf{Case 1}$  Suppose $p',q' \in Perm_{ev}(n+1)$ and $n+1 = 2m+1$, so there are numbers $a$ and $b$ so that $p' = \pi_T[2a,2a+2m]$ and $q' = \pi_T[2b,2b+2m]$, and
$$ p = \pi_T[a,a+m] \hspace{8.0ex} q = \pi_T[b,b+m], $$ 
$$ p' = \phi(p) \hspace{8.0ex} q' = \phi(q). $$
If $T[a,a+m-1] \neq T[b,b+m-1]$ then $T[2a,2a+2m-1] \neq T[2b,2b+2m-1]$.  Hence 
$$T[a,a+m-1] = T[b,b+m-1]$$
and $p$ and $q$ have the same form.  If $p=q$ then $p'=q'$, by Lemma $\ref{pISqIFFppISqp}$, thus $p \neq q$.  By the induction hypothesis, $p$ and $q$ are a complementary pair of type $k$, for some $k \geq 1$.  Therefore, by Proposition $\ref{ImageOfTypeK}$, $\phi(p) = p'$ and $\phi(q) = q'$ are a complementary pair of type $2k-1$.

%-------------------------
%    2. [2a,2a+2m-1]
%-------------------------
$\vspace{0.5ex}$

$\textbf{Case 2}$  Suppose $p',q' \in Perm_{ev}(n+1)$ and $n+1 = 2m$, so there are numbers $a$ and $b$ so that $p' = \pi_T[2a,2a+2m-1]$ and $q' = \pi_T[2b,2b+2m-1]$, and
$$ p = \pi_T[a,a+m] \hspace{8.0ex} q = \pi_T[b,b+m], $$
$$ p' = \phi_L(p) \hspace{8.0ex} q' = \phi_L(q). $$

Since $p'$ and $q'$ have the same form, $T[2a,2a+2m-2] = T[2b,2b+2m-2]$.  Thus $T_{2a+2m-2} = T_{2b+2m-2}$ implies $T_{a+m-1} = T_{b+m-1}$, so 
$$ T[2a+2m-2,2a+2m-1] = \mu_T(T_{a+m-1}) = \mu_T(T_{b+m-1}) = T[2b+2m-2,2b+2m-1]$$
and
$$T[2a,2a+2m-1] = T[2b,2b+2m-1].$$
If $T[a,a+m-1] \neq T[b,b+m-1]$ then $T[2a,2a+2m-1] \neq T[2b,2b+2m-1]$.  Hence 
$$T[a,a+m-1] = T[b,b+m-1]$$
and $p$ and $q$ have the same form.  If $p=q$ then $\phi(p)=\phi(q)$, by Lemma $\ref{pISqIFFppISqp}$, and $p'=L(\phi(p))=L(\phi(q))=q'$, thus $p \neq q$.  By the induction hypothesis, $p$ and $q$ are a complementary pair of type $k$, for some $k \geq 1$.  If $k=1$, then $\phi_L(p)$ and $\phi(q)_L$ are a complementary pair of type $2k-2 = 0$ and $p' = q'$, thus $k \geq 2$.  Therefore, by Proposition $\ref{ImageOfTypeK}$, $\phi_L(p) = p'$ and $\phi_L(q) = q'$ are a complementary pair of type $2k-2 \geq 2$.

%-------------------------
%    3. [2a+1,2a+2m]
%-------------------------
$\vspace{0.5ex}$

$\textbf{Case 3}$  Suppose $p',q' \in Perm_{odd}(n+1)$ and $n+1 = 2m$, so there are numbers $a$ and $b$ so that $p' = \pi_T[2a+1,2a+2m]$ and $q' = \pi_T[2b+1,2b+2m]$, and
$$ p = \pi_T[a,a+m] \hspace{8.0ex} q = \pi_T[b,b+m], $$
$$ p' = \phi_R(p) \hspace{8.0ex} q' = \phi_R(q). $$

Since $p'$ and $q'$ have the same form, $T[2a+1,2a+2m-1] = T[2b+1,2b+2m-1]$.  Thus $T_{2a+1} = T_{2b+1}$ implies $T_{a} = T_{b}$, so 
$$ T[2a,2a+1] = \mu_T(T_{a}) = \mu_T(T_{b}) = T[2b,2b+1]$$
and
$$T[2a,2a+2m-1] = T[2b,2b+2m-1].$$
If $T[a,a+m-1] \neq T[b,b+m-1]$ then $T[2a,2a+2m-1] \neq T[2b,2b+2m-1]$.  Hence 
$$T[a,a+m-1] = T[b,b+m-1]$$
and $p$ and $q$ have the same form.  If $p=q$ then $\phi(p)=\phi(q)$, by Lemma $\ref{pISqIFFppISqp}$, and $p'=R(\phi(p))=R(\phi(q))=q'$, thus $p \neq q$.  By the induction hypothesis, $p$ and $q$ are a complementary pair of type $k$, for some $k \geq 1$.  If $k=1$, then $\phi_R(p)$ and $\phi_R(q)$ are a complementary pair of type $2k-2 = 0$ and $p' = q'$, thus $k \geq 2$.  Therefore, by Proposition $\ref{ImageOfTypeK}$, $\phi_R(p) = p'$ and $\phi(q)_R = q'$ are a complementary pair of type $2k-2 \geq 2$.

%-------------------------
%    4. [2a+1,2a+2m+1]
%-------------------------
$\vspace{0.5ex}$

$\textbf{Case 4}$  Suppose $p',q' \in Perm_{odd}(n+1)$ and $n+1 = 2m+1$, so there are numbers $a$ and $b$ so that $p' = \pi_T[2a+1,2a+2m+1]$ and $q' = \pi_T[2b+1,2b+2m+1]$, and
$$ p = \pi_T[a,a+m+1] \hspace{8.0ex} q = \pi_T[b,b+m+1], $$
$$ p' = \phi_M(p) \hspace{8.0ex} q' = \phi_M(q). $$

Since $p'$ and $q'$ have the same form, $T[2a+1,2a+2m] = T[2b+1,2b+2m]$.  Thus $T_{2a+1} = T_{2b+1}$ implies $T_{a} = T_{b}$, so 
$$ T[2a,2a+1] = \mu_T(T_{a}) = \mu_T(T_{b}) = T[2b,2b+1]$$
and $T_{2a+2m} = T_{2b+2m}$ implies $T_{a+m} = T_{b+m}$, so 
$$ T[2a+2m,2a+2m+1] = \mu_T(T_{a+m}) = \mu_T(T_{b+m}) = T[2b+2m,2b+2m+1].$$
Therefore,
$$T[2a,2a+2m+1] = T[2b,2b+2m+1].$$
If $T[a,a+m] \neq T[b,b+m]$ then $T[2a,2a+2m+1] \neq T[2b,2b+2m+1]$.  Hence 
$$T[a,a+m] = T[b,b+m]$$
and $p$ and $q$ have the same form.  If $p=q$ then $\phi(p)=\phi(q)$, by Lemma $\ref{pISqIFFppISqp}$, and $p'=M(\phi(p))=M(\phi(q))=q'$, thus $p \neq q$.  By the induction hypothesis, $p$ and $q$ are a complementary pair of type $k$, for some $k \geq 1$.  If $k=1$, then $\phi_M(p)$ and $\phi_M(q)$ are a complementary pair of type $2k-3 = -1$ and $p' = q'$, thus $k \geq 2$.  Therefore, by Proposition $\ref{ImageOfTypeK}$, $\phi_M(p) = p'$ and $\phi_M(q) = q'$ are a complementary pair of type $2k-3 \geq 1$.

Therefore subpermutations $p$ and $q$ have the same form if and only if $p$ and $q$ are a complementary pair of type $k$, for some $k \geq 1$.
$\qed$
\end{proof}
%--------------

There are a number of useful corollaries of Theorem $\ref{SameFormIFFCompPair}$.  These corollaries give the number of subpermutations that can have the same form and show when the maps $\phi_L$, $\phi_R$, and $\phi_M$ are not injective.

%--------------
\begin{corollary}
\label{AtMostOneCompliment}
For a subpermutation $p$ of $\pi_T$, there can be at most one subpermutation $q$ of $\pi_T$ so that $p$ and $q$ are a complementary pair.
\end{corollary}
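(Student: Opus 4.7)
The plan is to deduce the claim from Theorem~\ref{SameFormIFFCompPair} via a short ``difference vector'' calculation followed by a case analysis. I argue by contradiction: suppose $p = \pi_T[a, a+n]$ has two distinct complementary partners $q_1$ and $q_2$, of types $k_1, k_2 \geq 1$ and shifts $\varepsilon_1, \varepsilon_2 \in \{-1, +1\}$. By Theorem~\ref{SameFormIFFCompPair}, $p$, $q_1$, and $q_2$ all share the same form, so $q_1$ and $q_2$ themselves form a complementary pair of some type $k_3 \geq 1$ with shift $\varepsilon_3 \in \{-1, +1\}$.

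The key observation is that if $p$ and $q$ form a complementary pair of type $k$ with shift $\varepsilon$, then reading off the definition shows that the componentwise difference $p - q$ has a rigid \emph{step profile}: value $\varepsilon$ on coordinates $0, \ldots, k-1$, value $0$ on coordinates $k, \ldots, n-k$, and value $-\varepsilon$ on coordinates $n-k+1, \ldots, n$. I would write this profile down for each of the three pairs $(p, q_1)$, $(p, q_2)$, $(q_1, q_2)$, and exploit the identity $(p - q_1) - (p - q_2) = q_2 - q_1$.

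Assume without loss of generality $k_1 \leq k_2$. The index set $\{0, \ldots, n\}$ then splits into at most five blocks on which the left-hand side is piecewise constant, with values determined only by $\varepsilon_1$ and $\varepsilon_2$. The right-hand side, being itself of step-profile form with parameters $k_3, \varepsilon_3$, has all entries in $\{-1, 0, +1\}$, with its nonzero entries forming a prefix of one sign and a suffix of the opposite sign. A short case analysis kills every possibility: if $k_1 = k_2$, either $\varepsilon_1 = \varepsilon_2$ (forcing $q_1 = q_2$) or the end blocks take the impossible value $\pm 2$; if $k_1 < k_2$ and $\varepsilon_1 = \varepsilon_2$, the first block of nonzero values begins at coordinate $k_1 \geq 1$ rather than at coordinate $0$, forcing $k_3 \leq 0$ and hence $q_1 = q_2$; if $k_1 < k_2$ and $\varepsilon_1 \neq \varepsilon_2$, coordinate $0$ already carries value $\pm 2$, again impossible.

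I expect the only real difficulty to be disciplined bookkeeping of the block boundaries and signs through the sub-cases, especially making sure the edge case $k_2 = (n+1)/2$ (where the middle block is empty) does not change any conclusion. Once the step-profile observation is in hand, the argument is essentially mechanical.
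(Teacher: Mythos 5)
Your proposal is correct and takes essentially the same route as the paper: both apply Theorem~\ref{SameFormIFFCompPair} to conclude that two putative partners $q_1, q_2$ of $p$ would themselves have to be a complementary pair, and both derive the contradiction from the values at the ends of the permutations. The paper shortcuts your case analysis by observing that distinct types force $\varepsilon_1 = -\varepsilon_2$ (since $q_1$ and $q_2$ begin with $p_0 - \varepsilon_1$ and $p_0 - \varepsilon_2$, which must differ because they equal distinct entries of $p$), so only your final case --- a difference of $\pm 2$ at coordinate $0$, incompatible with any complementary pair --- actually occurs; your remaining cases are vacuous but correctly dispatched by the step-profile argument.
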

%--------------
\begin{proof}
Assume that $p$ is a subpermutation of $\pi_T$ so that $p$ and $q$ are a complementary pair of type $s$, and $p$ and $r$ are a complementary pair of type $t$.  Moreover, $s \neq t$, and thus $q \neq r$.  Then there are decompositions:
\begin{align*}
p &= \pi_T[a,a+n] = [\alpha_1 \cdots \alpha_s \lambda_1 \cdots \lambda_x \beta_1 \cdots \beta_s] \\ 
q &= \pi_T[b,b+n] = [\beta_1 \cdots \beta_s \lambda_1 \cdots \lambda_x \alpha_1 \cdots \alpha_s]  
\end{align*}
so that for $\varepsilon_s \in \{-1, 1 \}$, $\alpha_i = \beta_i + \varepsilon_s$ for each $i = 1, 2, \ldots, s$, and
\begin{align*}
p &= \pi_T[a,a+n] = [\alpha'_1 \cdots \alpha'_t \lambda'_1 \cdots \lambda'_y \beta'_1 \cdots \beta'_t] \\ 
r &= \pi_T[b,b+n] = [\beta'_1 \cdots \beta'_t \lambda'_1 \cdots \lambda'_y \alpha'_1 \cdots \alpha'_t]
\end{align*}
so that for $\varepsilon_t \in \{-1, 1 \}$, $\alpha'_i = \beta'_i + \varepsilon_t$ for each $i = 1, 2, \ldots, t$.  

Since $p$ and $q$ are a complementary pair they have the same form, as do $p$ and $r$.  Thus $q$ and $r$ are distinct subpermutations with the same form, so by Theorem $\ref{SameFormIFFCompPair}$ $q$ and $r$ are a complementary pair of type $k$, for some $k$.

If $\beta_1 = \beta'_1$ then $p_{n-s+1} = p_{n-t+1}$, but since $s \neq t$ this cannot happen.  Thus $\beta_1 \neq \beta'_1$ and $\varepsilon_s \neq \varepsilon_t$, so $\varepsilon_s = -\varepsilon_t$.  Hence
\begin{align*}
\alpha_1 = \beta_1 + \varepsilon_s \hspace{3.0ex} \Rightarrow \hspace{3.0ex} \beta_1 &= \alpha_1 - \varepsilon_s  \\
\alpha'_1 = \beta'_1 + \varepsilon_t \hspace{3.0ex} \Rightarrow \hspace{3.0ex} \beta'_1 = \alpha'_1 - \varepsilon_t \hspace{3.0ex} \Rightarrow \hspace{3.0ex} \beta'_1 &= \alpha_1 + \varepsilon_s.
\end{align*}
Therefore $q_0 \neq r_0 \pm 1$, and $q$ and $r$ are not a complementary pair, contradicting the assumption.
$\qed$
\end{proof}
%--------------

The next corollary follows directly from Theorem $\ref{SameFormIFFCompPair}$ and Corollary $\ref{AtMostOneCompliment}$

%--------------
\begin{corollary}
\label{AtMostTwoSubpermsWithSameForm}
For a factor $u$ of $T$, there are at most two subpermutations of $\pi_T$ with form $u$.
\end{corollary}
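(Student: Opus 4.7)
The plan is to argue by contradiction, leveraging the two results immediately preceding this corollary. Suppose for contradiction that some factor $u$ of $T$ admits three pairwise distinct subpermutations $p$, $q$, and $r$ of $\pi_T$, all having form $u$. By definition of ``same form,'' we have $p_i < p_{i+1} \Leftrightarrow q_i < q_{i+1} \Leftrightarrow r_i < r_{i+1}$ for every relevant $i$, so sharing a form is a transitive relation. Hence each of the three pairs $(p,q)$, $(p,r)$, and $(q,r)$ consists of distinct subpermutations with the same form.

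Next I would apply Theorem \ref{SameFormIFFCompPair} to each of these three pairs. This yields that $(p,q)$ is a complementary pair of some type $s \geq 1$, and separately $(p,r)$ is a complementary pair of some type $t \geq 1$. In particular, $p$ has both $q$ and $r$ as complementary partners, with $q \neq r$.

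The contradiction is then immediate from Corollary \ref{AtMostOneCompliment}, which asserts that a given subpermutation of $\pi_T$ can have at most one complementary partner. Since we have exhibited two distinct complements of $p$, the assumption that three subpermutations share the form $u$ is untenable, and therefore at most two subpermutations of $\pi_T$ have any prescribed form $u$.

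The statement is a corollary in the strict sense: no new technical obstacle arises, because the substantive work of classifying same-form pairs as complementary pairs (Theorem \ref{SameFormIFFCompPair}) and of bounding the number of complements by one (Corollary \ref{AtMostOneCompliment}) has already been done. The only thing to verify carefully is the transitivity of ``same form,'' which is a direct consequence of Definition \ref{SameForm}, and the observation that distinctness propagates through the three pairs, so that both applications of the previous results are legitimate.
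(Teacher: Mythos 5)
Your argument is exactly the one the paper intends: the paper gives no separate proof, stating only that the corollary follows directly from Theorem \ref{SameFormIFFCompPair} and Corollary \ref{AtMostOneCompliment}, and your contradiction via transitivity of ``same form'' plus the uniqueness of a complementary partner is precisely that deduction spelled out. The proposal is correct and takes essentially the same approach.
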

%--------------
%\begin{proof}
%Let $u$ be a factor of $T$.  Then there is a subpermutation $p$ 
%$\qed$
%\end{proof}
%--------------

The next corollary shows when the maps $\phi_L(p)$, $\phi_R(p)$, and $\phi_M(p)$ are not injective.

%--------------
\begin{corollary}
\label{WhenTheMapsFailToBeABijection}
For subpermutations $p = \pi_T[a,a+n]$ and $q = \pi_T[b,b+n]$, where $p \neq q$:
\begin{itemize}
\item[(a)] $\phi_L(p) = \phi_L(q)$ if and only if $p$ and $q$ are a complementary pair of type 1.
\item[(b)] $\phi_R(p) = \phi_R(q)$ if and only if $p$ and $q$ are a complementary pair of type 1.
\item[(c)] $\phi_M(p) = \phi_M(q)$ if and only if $p$ and $q$ are a complementary pair of type 1.
\end{itemize}
\end{corollary}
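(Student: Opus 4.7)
My plan is to treat the three parts in parallel, since they follow the same template: the ``if'' direction is immediate from Proposition~\ref{ImageOfTypeK} with $k=1$, while the ``only if'' direction is a two-step argument that first promotes equality of images to sameness of form on $p$ and $q$, then rules out type $k \geq 2$ via Proposition~\ref{ImageOfTypeK} once more.

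For the forward direction I would simply quote Proposition~\ref{ImageOfTypeK}: if $p$ and $q$ are a complementary pair of type $1$, then parts (c), (d), (e) produce complementary pairs of types $2\cdot 1-2=0$, $0$, and $2\cdot 1-3=-1$ respectively, which by the convention that ``complementary pair of type $\leq 0$'' means the two subpermutations coincide gives $\phi_L(p)=\phi_L(q)$, $\phi_R(p)=\phi_R(q)$, and $\phi_M(p)=\phi_M(q)$.

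For the converse, consider (a): say $\phi_L(p)=\phi_L(q)$ with $p \neq q$. Then $\pi_T[2a,2a+2n-1]=\pi_T[2b,2b+2n-1]$ have the same form, so the factors $T[2a,2a+2n-2]$ and $T[2b,2b+2n-2]$ agree. I would lift this to $T[a,a+n-1]=T[b,b+n-1]$ by the bookkeeping already used in Case~2 of the proof of Theorem~\ref{SameFormIFFCompPair}: the last equal letter $T_{2a+2n-2}=T_{2b+2n-2}$ is the first letter of $\mu_T(T_{a+n-1})$, so $T_{a+n-1}=T_{b+n-1}$; this extends the equality one step to a length-$2n$ equal factor, which is $\mu_T(T[a,a+n-1])=\mu_T(T[b,b+n-1])$, and injectivity of $\mu_T$ on words gives $T[a,a+n-1]=T[b,b+n-1]$. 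Hence $p$ and $q$ have the same form, and by Theorem~\ref{SameFormIFFCompPair} they are a complementary pair of type $k$ for some $k \geq 1$. For $\phi_R$ and $\phi_M$ the lifting step is cosmetically different but structurally the same, using Cases~3 and~4 of the proof of Theorem~\ref{SameFormIFFCompPair} to regenerate the missing letter(s) at the odd endpoint(s) before applying $\mu_T$-injectivity.

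It remains to force $k=1$. Suppose $k \geq 2$. Then Proposition~\ref{ImageOfTypeK}(c) makes $\phi_L(p)$ and $\phi_L(q)$ a complementary pair of type $2k-2 \geq 2$; but any complementary pair of type $\geq 1$ has leading entries $\alpha'_1$ and $\beta'_1$ differing by $\pm 1$, so $\phi_L(p) \neq \phi_L(q)$, contradicting the assumption. The analogous contradictions hold for $\phi_R$ via Proposition~\ref{ImageOfTypeK}(d) (type $2k-2 \geq 2$) and for $\phi_M$ via (e) (type $2k-3 \geq 1$). Hence $k=1$. I do not expect any real obstacle; the only mildly fiddly point is the lifting $T[2a,\cdot]=T[2b,\cdot] \Rightarrow T[a,\cdot]=T[b,\cdot]$, and that is already worked out inside the cases of Theorem~\ref{SameFormIFFCompPair}, so the corollary amounts to assembling pieces that have already been built.
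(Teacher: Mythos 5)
Your proposal is correct and follows essentially the same route as the paper: the ``if'' direction via Proposition~\ref{ImageOfTypeK} at $k=1$, the converse by lifting equality of the image factors to $T[a,a+n-1]=T[b,b+n-1]$ (the same $\mu_T$-bookkeeping as in Theorem~\ref{SameFormIFFCompPair}), invoking Theorem~\ref{SameFormIFFCompPair} to get a complementary pair of type $k\geq 1$, and then excluding $k\geq 2$ because Proposition~\ref{ImageOfTypeK} would make the images a complementary pair of positive type, hence distinct. No gaps beyond what the paper itself leaves implicit.
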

%--------------
\begin{proof}
It should be clear for all three cases that if $p$ and $q$ are a complementary pair of type 1 then 
$$\phi_L(p) = \phi_L(q) \hspace{6.0ex} \phi_R(p) = \phi_R(q) \hspace{6.0ex} \phi_M(p) = \phi_M(q)$$
by Proposition $\ref{ImageOfTypeK}$.  For the three cases, let $p = \pi_T[a,a+n]$ and $q = \pi_T[b,b+n]$ and $p \neq q$.

%-------------------------
%    a. \phi_L
%-------------------------

$\textbf{(a)}$  Suppose $\phi_L(p) = \phi_L(q)$, so $\pi_T[2a,2a+2n-1] = \pi_T[2b,2b+2n-1]$ and $T[2a,2a+2n-2] = T[2b,2b+2n-2]$.  Thus $T_{2a+2n-2} = T_{2b+2n-2}$ implies $T_{a+n-1} = T_{b+n-1}$, so 
$$ T[2a+2n-2,2a+2n-1] = \mu_T(T_{a+n-1}) = \mu_T(T_{b+n-1}) = T[2b+2n-2,2b+2n-1]$$
and
$$T[2a,2a+2n-1] = T[2b,2b+2n-1].$$
If $T[a,a+n-1] \neq T[b,b+n-1]$ then $T[2a,2a+2n-1] \neq T[2b,2b+2n-1]$.  Hence 
$$T[a,a+n-1] = T[b,b+n-1]$$
and $p$ and $q$ have the same form.  By Theorem $\ref{SameFormIFFCompPair}$, $p$ and $q$ are a complementary pair of type $k \geq 1$.  If $k > 1$, then $\phi_L(p)$ and $\phi_L(q)$ are a complementary pair of type $2k-2 > 1$, so $\phi_L(p) \neq \phi_L(q)$.  Therefore $p$ and $q$ are a complementary pair of type 1. 

%-------------------------
%    b. \phi_R
%-------------------------
$\vspace{0.5ex}$

$\textbf{(b)}$  Suppose $\phi_R(p) = \phi_R(q)$, so $\pi_T[2a+1,2a+2n] = \pi_T[2b+1,2b+2n]$ and $T[2a+1,2a+2n-1] = T[2b+1,2b+2n-1]$.  Thus $T_{2a+1} = T_{2b+1}$ implies $T_a = T_b$, so 
$$ T[2a,2a+1] = \mu_T(T_{a}) = \mu_T(T_{b}) = T[2b,2b+1]$$
and
$$T[2a,2a+2n-1] = T[2b,2b+2n-1].$$
If $T[a,a+n-1] \neq T[b,b+n-1]$ then $T[2a,2a+2n-1] \neq T[2b,2b+2n-1]$.  Hence 
$$T[a,a+n-1] = T[b,b+n-1]$$
and $p$ and $q$ have the same form.  By Theorem $\ref{SameFormIFFCompPair}$, $p$ and $q$ are a complementary pair of type $k \geq 1$.  If $k > 1$, then $\phi_R(p)$ and $\phi_R(q)$ are a complementary pair of type $2k-2 > 1$, so $\phi_R(p) \neq \phi_R(q)$.  Therefore $p$ and $q$ are a complementary pair of type 1. 

%-------------------------
%    c. \phi_M
%-------------------------
$\vspace{0.5ex}$

$\textbf{(c)}$  Suppose $\phi_M(p) = \phi_M(q)$, so $\pi_T[2a+1,2a+2n-1] = \pi_T[2b+1,2b+2n-1]$ and $T[2a+1,2a+2n-2] = T[2b+1,2b+2n-2]$.  Thus $T_{2a+1} = T_{2b+1}$ implies $T_{a} = T_{b}$, so 
$$ T[2a,2a+1] = \mu_T(T_{a}) = \mu_T(T_{b}) = T[2b,2b+1]$$
and $T_{2a+2n} = T_{2b+2n}$ implies $T_{a+n} = T_{b+n}$, so 
$$ T[2a+2n,2a+2n+1] = \mu_T(T_{a+n}) = \mu_T(T_{b+n}) = T[2b+2n,2b+2n+1].$$
Therefore,
$$T[2a,2a+2n+1] = T[2b,2b+2n+1].$$
If $T[a,a+n] \neq T[b,b+n]$ then $T[2a,2a+2n+1] \neq T[2b,2b+2n+1]$.  Hence 
$$T[a,a+n] = T[b,b+n]$$
and $p$ and $q$ have the same form.  By Theorem $\ref{SameFormIFFCompPair}$, $p$ and $q$ are a complementary pair of type $k \geq 1$.  If $k > 1$, then $\phi_M(p)$ and $\phi_M(q)$ are a complementary pair of type $2k-3 \geq 1$, so $\phi_M(p) \neq \phi_M(q)$.  Therefore $p$ and $q$ are a complementary pair of type 1. 
$\qed$
\end{proof}
%--------------

So when there are complementary pairs of type 1 none of the maps $\phi_L$, $\phi_R$, and $\phi_M$ are injective, and thus they are not bijective.  In cases where there are no complementary pairs of type 1 the maps $\phi_L$, $\phi_R$, and $\phi_M$ are  injective and the inequalities in Lemma $\ref{UpperBoundForTau}$ become equalities.  So we need to know when complementary pairs of type 1 will occur, and how many complementary pairs there are.

%=========================================================================================================================
%=========================================================================================================================
\section{Type 1 Pairs}
\label{SecTypeOnePairs}

This section investigates when complementary pairs of type 1 arise and the number of pairs that occur.  To show when the maps $\phi_L$, $\phi_R$, and $\phi_M$ are bijections we need to consider when complementary pairs of type 1 occur.  The following lemma shows when there are complementary pairs of type $k$, for each $k \geq 0$.  An induction argument will be used with Proposition $\ref{ImageOfTypeK}$ and Theorem $\ref{SameFormIFFCompPair}$ to show that all complementary pairs of a given length are of same type.

%--------------
\begin{proposition}
\label{LengthOfAlphaForAGBForN}
Let $n > 4$ be a natural number and let $p$ and $q$ be subpermutations of $\pi_T$ of length $n+1$ with the same form.  There exist $r$ and $c$ so that $n =2^r+c$, where $0 \leq c < 2^r$.  
\begin{itemize}
\item[(a)] If $0 \leq c < 2^{r-1}+1$, then either $p = q$ or $p$ and $q$ are a complementary pair of type $c+1$.
\item[(b)] If $2^{r-1}+1 \leq c < 2^r$, then $p = q$.
\end{itemize}
\end{proposition}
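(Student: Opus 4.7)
The plan is a strong induction on $n$, with base cases $5 \leq n \leq 8$ read directly from the explicit subpermutation tables in Appendix \ref{SecTheSubperms}. For the inductive step, fix $n \geq 9$ and subpermutations $p \neq q$ of length $n+1$ sharing a common form; by Theorem \ref{SameFormIFFCompPair} they form a complementary pair of some type $k \geq 1$, and the task is to show that $k = c+1$ in range (a) and that no such pair can exist in range (b).

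I would split into the four cases from the proof of Theorem \ref{SameFormIFFCompPair}, according to the parity of $n+1$ and whether the starting positions of $p$ and $q$ are both even or both odd. In each case $p$ and $q$ are images of a shorter pair $\tilde{p}, \tilde{q}$ under one of the maps $\phi, \phi_L, \phi_R, \phi_M$, and the argument already given in that proof shows that $\tilde{p}$ and $\tilde{q}$ share a common form at some length $\tilde{n}+1$. The inductive hypothesis applied to $\tilde{p}, \tilde{q}$ either yields $\tilde{p} = \tilde{q}$, in which case $p = q$ (using Lemma \ref{pISqIFFppISqp} for Case 1 and the surjections $\phi_L, \phi_R, \phi_M$ for the other cases), contradicting $p \neq q$, or it yields a complementary pair $\tilde{p}, \tilde{q}$ of type $\tilde{k} = \tilde{c}+1$ where $\tilde{n} = 2^{r-1} + \tilde{c}$. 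Proposition \ref{ImageOfTypeK} then determines the type of $p, q$ as $2\tilde{k}-1$, $2\tilde{k}-2$, $2\tilde{k}-2$, or $2\tilde{k}-3$ in Cases 1--4 respectively.

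The bulk of the remaining work is arithmetic bookkeeping on the decomposition $n = 2^r + c$ under halving. In Cases 1--3 the reduced parameter satisfies $\tilde{c} \in \{c/2, (c+1)/2\}$, and a direct verification shows that the hypothesis $c < 2^{r-1}+1$ (together with the forced parity of $c$) translates cleanly into $\tilde{c} < 2^{r-2}+1$; substituting $\tilde{k} = \tilde{c}+1$ into the image-type formulas of Proposition \ref{ImageOfTypeK} then produces exactly $k = c+1$ in each case, establishing (a). Symmetrically, $c \geq 2^{r-1}+1$ forces $\tilde{c} \geq 2^{r-2}+1$, placing $\tilde{n}$ in range (b) of the inductive hypothesis, so $\tilde{p} = \tilde{q}$ and hence $p = q$, establishing (b).

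The main obstacle will be Case 4, where $\tilde{n} = m+1$ rather than $m$, giving $\tilde{c} = c/2 + 1$. This upward shift means that at the boundary values $c = 2^{r-1}$ (still inside range (a)) and $c = 2^r - 2$, the reduced parameter $\tilde{c}$ already reaches or exceeds the halved threshold $2^{r-2}+1$, so the inductive hypothesis forces $\tilde{p} = \tilde{q}$ and Case 4 contributes no pair at all. The delicate point to verify is that the predicted complementary pair of type $c+1$ is still supplied in these situations by Case 1 via $\phi$ on an even-position reduction, so that the conclusion of (a) remains consistent and no pair is lost. Once this edge-case compatibility is checked across all four cases, the induction closes.
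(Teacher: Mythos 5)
Your overall strategy is the same as the paper's: strong induction with the appendix as base case, the four parity/starting-position cases inherited from Theorem \ref{SameFormIFFCompPair}, reduction to a shorter same-form pair $\tilde p,\tilde q$ through $\phi,\phi_L,\phi_R,\phi_M$, Proposition \ref{ImageOfTypeK} to read off the image type, and halving arithmetic on $n=2^r+c$. Your bookkeeping for range (a) and for the interior of range (b) is correct. The genuine gap is at the top of range (b). When $c=2^r-1$ (your Cases 2 and 3, via $\phi_L$ and $\phi_R$) and when $c=2^r-2$ (your Case 4, via $\phi_M$), the reduced subpermutations have length $2^r+1$: the value $\tilde c=2^{r-1}$ you compute is not admissible in a decomposition based at $2^{r-1}$, and you must rebase to $\tilde n=2^r+0$. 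The inductive hypothesis (which does cover $\tilde n=2^r<n$ in your strong induction) then places $\tilde n$ in range (a) with $c=0$, and this does \emph{not} force $\tilde p=\tilde q$; it permits $\tilde p,\tilde q$ to be a complementary pair of type $1$, and such distinct pairs genuinely occur at that length (Lemma \ref{NumberOfFactWithLenAlphaOne} produces $2^r$ of them, and they are reached because $\phi_L,\phi_R,\phi_M$ are surjective). So your assertion that ``the inductive hypothesis forces $\tilde p=\tilde q$ and Case 4 contributes no pair at all,'' and the blanket claim that $c\ge 2^{r-1}+1$ places $\tilde n$ in range (b), both fail exactly at these boundary values; moreover you flag only $c=2^r-2$ and never address $c=2^r-1$ in Cases 2 and 3.

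The missing ingredient is the $k=1$ clause of Proposition \ref{ImageOfTypeK}: if $\tilde p$ and $\tilde q$ are a complementary pair of type $1$, then $\phi_L(\tilde p)=\phi_L(\tilde q)$, $\phi_R(\tilde p)=\phi_R(\tilde q)$, and $\phi_M(\tilde p)=\phi_M(\tilde q)$. Hence even when the reduced pair is distinct, its image collapses and one still concludes $p=q$, which is exactly what part (b) asserts; this is how the paper disposes of these lengths in its cases b.5 and b.6. Separately, the ``delicate point'' you raise for range (a) at $c=2^{r-1}$ is a non-issue: the proposition only constrains the type of a distinct same-form pair when one exists, it does not assert that any pair exists, so nothing needs to be ``supplied'' by Case 1 when the odd-position reduction yields no pair. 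With the boundary cases rerouted through the $k=1$ collapsing statements, your induction closes and coincides with the paper's argument.
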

%--------------
\begin{proof}
This will be proved using an induction argument on $r$.  By looking at the subpermutations in Appendix $\ref{SecTheSubperms}$ it can be readily verified that the lemma is true for $r = 2$ and $c = 0, 1, 2, 3$, so for $n = 4, 5, 6, 7$.  Suppose that $r>2$ and that the statement of the lemma is true when $n < 2^r$.  It will be shown that it is true for all $n = 2^r+c$ where $0 \leq c < 2^r$.  

%-------------------------
%
%      a. Small c
%
%-------------------------
$\vspace{0.5ex}$

$\textbf{(a)}$  Let $n=2^r+c$ with $0 \leq c < 2^{r-1}+1$.  If $p' = q'$ the proposition is satisfied, so assume that $p' \neq q'$.  As it was stated in the proof of Theorem $\ref{SameFormIFFCompPair}$, if $p' \in Perm_{ev}(n+1)$ and $q' \in Perm_{odd}(n+1)$, then $p'$ and $q'$ cannot have the same form.  We must also consider when $n+1$ is both even and odd.  So there will be four subcases to consider, when $p',q' \in Perm_{ev}(n+1)$ or when $p',q' \in Perm_{odd}(n+1)$ and when $n+1$ is even or odd.

%------------------------
% a1: Perm_{ev}, n+1 odd
%------------------------
$\textbf{Case a.1:}$  Suppose $p',q' \in Perm_{ev}(n+1)$ and $n+1$ is odd, so $c$ is even.  There is a $d$ so that $c=2d$, with $0 \leq d < 2^{r-2}+1$, and there are numbers $a$ and $b$ so that $p' = \pi_T[2a,2a+2^r+2d]$ and $q' = \pi_T[2b,2b+2^r+2d]$, and
$$ p = \pi_T[a,a+2^{r-1}+d] \hspace{8.0ex} q = \pi_T[b,b+2^{r-1}+d], $$ 
$$ p' = \phi(p) \hspace{8.0ex} q' = \phi(q). $$
If $T[a,a+2^{r-1}+d-1] \neq T[b,b+2^{r-1}+d-1]$ then $T[2a,2a+2^r+2d-1] \neq T[2b,2b+2^r+2d-1]$.  Hence 
$$T[a,a+2^{r-1}+d-1] = T[b,b+2^{r-1}+d-1]$$
and $p$ and $q$ have the same form.  If $p=q$ then $p'=q'$, by Corollary $\ref{CorTo_pisqiff}$, thus $p \neq q$.  By the induction hypothesis, $p$ and $q$ are a complementary pair of type $d+1$.  Therefore, by Proposition $\ref{ImageOfTypeK}$, $\phi(p) = p'$ and $\phi(q) = q'$ are a complementary pair of type $2(d+1)-1 = 2d+1 = c+1$.

%------------------------
% a2: Perm_{odd}, n+1 odd
%------------------------
$\vspace{0.5ex}$

$\textbf{Case a.2:}$  Suppose $p',q' \in Perm_{odd}(n+1)$ and $n+1$ is odd, so $c$ is even.  There is a $d$ so that $c=2d$, with $0 \leq d < 2^{r-2}+1$, and there are numbers $a$ and $b$ so that $p' = \pi_T[2a+1,2a+2^r+2d+1]$ and $q' = \pi_T[2b+1,2b+2^r+2d+1]$, and
$$ p = \pi_T[a,a+2^{r-1}+d+1] \hspace{8.0ex} q = \pi_T[b,b+2^{r-1}+d+1], $$
$$ p' = \phi_M(p) \hspace{8.0ex} q' = \phi_M(q). $$

Since $p'$ and $q'$ have the same form, $T[2a+1,2a+2^r+2d] = T[2b+1,2b+2^r+2d]$.  Thus $T_{2a+1} = T_{2b+1}$ implies $T_{a} = T_{b}$, so 
$$ T[2a,2a+1] = \mu_T(T_{a}) = \mu_T(T_{b}) = T[2b,2b+1]$$
and $T_{2a+2^r+2d} = T_{2b+2^r+2d}$ implies $T_{a+2^{r-1}+d} = T_{b+2^{r-1}+d}$, so 
$$ T[2a+2^r+2d,2a+2^r+2d+1] = \mu_T(T_{a+2^{r-1}+d}) = \mu_T(T_{b+2^{r-1}+d}) = T[2b+2^r+2d,2b+2^r+2d+1].$$
Therefore,
$$T[2a,2a+2^r+2d+1] = T[2b,2b+2^r+2d+1].$$
If $T[a,a+2^{r-1}+d] \neq T[b,b+2^{r-1}+d]$ then $T[2a,2a+2^r+2d+1] \neq T[2b,2b+2^r+2d+1]$.  Hence 
$$T[a,a+2^{r-1}+d] = T[b,b+2^{r-1}+d]$$
and $p$ and $q$ have the same form.  If $p=q$ then $\phi(p)=\phi(q)$, by Corollary $\ref{CorTo_pisqiff}$, and $p'=M(\phi(p))=M(\phi(q))=q'$, thus $p \neq q$.  By the induction hypothesis, $p$ and $q$ are a complementary pair of type $d+2$.  Therefore, by Proposition $\ref{ImageOfTypeK}$, $\phi_M(p) = p'$ and $\phi_M(q) = q'$ are a complementary pair of type $2(d+2)-3 = 2d+1 = c+1$.

%------------------------
% a3: Perm_{ev}, n+1 even
%------------------------
$\vspace{0.5ex}$

$\textbf{Case a.3:}$  Suppose $p',q' \in Perm_{ev}(n+1)$ and $n+1$ is even, so $c$ is odd.  There is a $d$ so that $c=2d+1$, with $0 \leq d < 2^{r-2}+1$, and there are numbers $a$ and $b$ so that $p' = \pi_T[2a,2a+2^r+2d+1]$ and $q' = \pi_T[2b,2b+2^r+2d+1]$, and
$$ p = \pi_T[a,a+2^{r-1}+d+1] \hspace{8.0ex} q = \pi_T[b,b+2^{r-1}+d+1], $$
$$ p' = \phi_L(p) \hspace{8.0ex} q' = \phi_L(q). $$

Since $p'$ and $q'$ have the same form, $T[2a,2a+2^r+2d] = T[2b,2b+2^r+2d]$.  Thus $T_{2a+2^r+2d} = T_{2b+2^r+2d}$ implies $T_{a+2^{r-1}+d} = T_{b+2^{r-1}+d}$, so 
$$ T[2a+2^r+2d,2a+2^r+2d+1] = \mu_T(T_{a+2^{r-1}+d}) = \mu_T(T_{b+2^{r-1}+d}) = T[2b+22^r+2d,2b+22^r+2d+1]$$
and
$$T[2a,2a+2^r+2d+1] = T[2b,2b+2^r+2d+1].$$
If $T[a,a+m-1] \neq T[b,b+m-1]$ then $T[2a,2a+2m-1] \neq T[2b,2b+2m-1]$.  Hence 
$$T[a,a+2^{r-1}+d] = T[b,b+2^{r-1}+d]$$
and $p$ and $q$ have the same form.  If $p=q$ then $\phi(p)=\phi(q)$, by Corollary $\ref{CorTo_pisqiff}$, and $p'=L(\phi(p))=L(\phi(q))=q'$, thus $p \neq q$.  By the induction hypothesis, $p$ and $q$ are a complementary pair of type $d+2$.  Therefore, by Proposition $\ref{ImageOfTypeK}$, $\phi_L(p) = p'$ and $\phi_L(q) = q'$ are a complementary pair of type $2(d+2)-2 = 2d+2 = c+1$.

%------------------------
% a4: Perm_{odd}, n+1 even
%------------------------
$\vspace{0.5ex}$

$\textbf{Case a.4:}$  Suppose $p',q' \in Perm_{odd}(n+1)$ and $n+1$ is even, so $c$ is odd.  There is a $d$ so that $c=2d+1$, with $0 \leq d < 2^{r-2}+1$, and there are numbers $a$ and $b$ so that $p' = \pi_T[2a+1,2a+2^r+2d+2]$ and $q' = \pi_T[2b+1,2b+2^r+2d+2]$, and
$$ p = \pi_T[a,a+2^{r-1}+d+1] \hspace{8.0ex} q = \pi_T[b,b+2^{r-1}+d+1], $$
$$ p' = \phi_R(p) \hspace{8.0ex} q' = \phi_R(q). $$

Since $p'$ and $q'$ have the same form, $T[2a+1,2a+2^r+2d+1] = T[2b+1,2b+2^r+2d+1]$.  Thus $T_{2a+1} = T_{2b+1}$ implies $T_{a} = T_{b}$, so 
$$ T[2a,2a+1] = \mu_T(T_{a}) = \mu_T(T_{b}) = T[2b,2b+1]$$
and
$$T[2a,2a+2^r+2d+1] = T[2b,2b+2^r+2d+1].$$
If $T[a,a+2^{r-1}+d] \neq T[b,b+2^{r-1}+d]$ then $T[2a,2a+2^r+2d+1] \neq T[2b,2b+2^r+2d+1]$.  Hence 
$$T[a,a+2^{r-1}+d] = T[b,b+2^{r-1}+d]$$
and $p$ and $q$ have the same form.  If $p=q$ then $\phi(p)=\phi(q)$, by Corollary $\ref{CorTo_pisqiff}$, and $p'=R(\phi(p))=R(\phi(q))=q'$, thus $p \neq q$.  By the induction hypothesis, $p$ and $q$ are a complementary pair of type $d+2$.  Therefore, by Proposition $\ref{ImageOfTypeK}$, $\phi_R(p) = p'$ and $\phi(q)_R = q'$ are a complementary pair of type $2(d+2)-2 = 2d+2 = c+1$.

%-------------------------
%
%      b. Large c
%
%-------------------------
$\vspace{0.5ex}$

$\textbf{(b)}$  Let $n=2^r+c$ with $2^{r-1}+1 \leq c < 2^r$.  There will again be the four subcases from part $(a)$ when $2^{r-1}+1 \leq c < 2^r-2$, when $p',q' \in Perm_{ev}(n+1)$ or when $p',q' \in Perm_{odd}(n+1)$ and when $n+1$ is even or odd.  There will also be 2 additional special cases to consider, which are when $c = 2^r-2$ and $c=2^r-1$.

%------------------------
% b1: Perm_{ev}, n+1 odd
%------------------------
$\textbf{Case b.1:}$  Suppose $p',q' \in Perm_{ev}(n+1)$ and $n+1$ is odd, so $c$ is even.  There is a $d$ so that $c=2d$, with $2^{r-2}+1 \leq d < 2^{r-1}$, and there are numbers $a$ and $b$ so that $p' = \pi_T[2a,2a+2^r+2d]$ and $q' = \pi_T[2b,2b+2^r+2d]$, and
$$ p = \pi_T[a,a+2^{r-1}+d] \hspace{8.0ex} q = \pi_T[b,b+2^{r-1}+d], $$ 
$$ p' = \phi(p) \hspace{8.0ex} q' = \phi(q). $$
As in case $\textbf{a.1}$, $T[a,a+2^{r-1}+d-1] = T[b,b+2^{r-1}+d-1]$, so $p$ and $q$ have the same form.  By the induction hypothesis $p = q$, so by Corollary $\ref{CorTo_pisqiff}$, $p' = \phi(p) = \phi(q) = q'$.

%------------------------
% b2: Perm_{odd}, n+1 odd
%------------------------
$\vspace{0.5ex}$

$\textbf{Case b.2:}$  Suppose $p',q' \in Perm_{odd}(n+1)$ and $n+1$ is odd, so $c$ is even.  There is a $d$ so that $c=2d$, with $2^{r-2}+1 \leq d < 2^{r-1}$, and there are numbers $a$ and $b$ so that $p' = \pi_T[2a+1,2a+2^r+2d+1]$ and $q' = \pi_T[2b+1,2b+2^r+2d+1]$, and
$$ p = \pi_T[a,a+2^{r-1}+d+1] \hspace{8.0ex} q = \pi_T[b,b+2^{r-1}+d+1], $$
$$ p' = \phi_M(p) \hspace{8.0ex} q' = \phi_M(q). $$
As in case $\textbf{a.2}$, $T[a,a+2^{r-1}+d] = T[b,b+2^{r-1}+d]$, so $p$ and $q$ have the same form.  By the induction hypothesis $p = q$, so by Corollary $\ref{CorTo_pisqiff}$, $\phi(p) = \phi(q)$ and therefore $p' = \phi_M(p) = \phi_M(q) = q'$.

%------------------------
% b3: Perm_{ev}, n+1 even  n  =2^r+c = 2^r + 2d + 1 : [2a,2a+2^r+c] = [2a,2a+2^r+2d+1]
%------------------------
$\vspace{0.5ex}$

$\textbf{Case b.3:}$  Suppose $p',q' \in Perm_{ev}(n+1)$ and $n+1$ is even, so $c$ is odd.  There is a $d$ so that $c=2d+1$, with $2^{r-2}+1 \leq d < 2^{r-1}$, and there are numbers $a$ and $b$ so that $p' = \pi_T[2a,2a+2^r+2d+1]$ and $q' = \pi_T[2b,2b+2^r+2d+1]$, and
$$ p = \pi_T[a,a+2^{r-1}+d+1] \hspace{8.0ex} q = \pi_T[b,b+2^{r-1}+d+1], $$
$$ p' = \phi_L(p) \hspace{8.0ex} q' = \phi_L(q). $$
As in case $\textbf{a.3}$, $T[a,a+2^{r-1}+d] = T[b,b+2^{r-1}+d]$, so $p$ and $q$ have the same form.  By the induction hypothesis $p = q$, so by Corollary $\ref{CorTo_pisqiff}$, $\phi(p) = \phi(q)$ and therefore $p' = \phi_L(p) = \phi_L(q) = q'$.

%------------------------
% b4: Perm_{odd}, n+1 even
%------------------------
$\vspace{0.5ex}$

$\textbf{Case b.4:}$  Suppose $p',q' \in Perm_{odd}(n+1)$ and $n+1$ is even, so $c$ is odd.  There is a $d$ so that $c=2d+1$, with $2^{r-2}+1 \leq d < 2^{r-1}$, and there are numbers $a$ and $b$ so that $p' = \pi_T[2a+1,2a+2^r+2d+2]$ and $q' = \pi_T[2b+1,2b+2^r+2d+2]$, and
$$ p = \pi_T[a,a+2^{r-1}+d+1] \hspace{8.0ex} q = \pi_T[b,b+2^{r-1}+d+1], $$
$$ p' = \phi_R(p) \hspace{8.0ex} q' = \phi_R(q). $$
As in case $\textbf{a.4}$, $T[a,a+2^{r-1}+d] = T[b,b+2^{r-1}+d]$, so $p$ and $q$ have the same form.  By the induction hypothesis $p = q$, so by Corollary $\ref{CorTo_pisqiff}$, $\phi(p) = \phi(q)$ and therefore $p' = \phi_R(p) = \phi_R(q) = q'$.

%------------------------
% b5: c = 2^r - 2
%------------------------
$\vspace{0.5ex}$

$\textbf{Case b.5:}$  Suppose $c = 2^r-2$.  Thus $n = 2^r + 2^r-2 = 2^{r+1} - 2$, and the subpermutations $p'$ and $q'$ will have odd length.  There will be two subcases, these being when $p',q' \in Perm_{ev}(n+1)$ and when $p',q' \in Perm_{odd}(n+1)$.

$\textbf{Case b.5.i:}$  Suppose $p',q' \in Perm_{ev}(n+1)$.  There are numbers $a$ and $b$ so that $p' = \pi_T[2a,2a+2^{r+1}-2]$ and $q' = \pi_T[2b,2b+2^{r+1}-2]$, and
$$ p = \pi_T[a,a+2^r-1] \hspace{8.0ex} q = \pi_T[b,b+2^r-1], $$ 
$$ p' = \phi(p) \hspace{8.0ex} q' = \phi(q). $$
As in cases $\textbf{a.1}$ and $\textbf{b.1}$, $T[a,a+2^r-2] = T[b,b+2^r-2]$, so $p$ and $q$ have the same form.  By the induction hypothesis $p = q$, so by Corollary $\ref{CorTo_pisqiff}$, $p' = \phi(p) = \phi(q) = q'$.

$\textbf{Case b.5.ii:}$  Suppose $p',q' \in Perm_{odd}(n+1)$.  There are numbers $a$ and $b$ so that $p' = \pi_T[2a+1,2a+2^{r+1}-1]$ and $q' = \pi_T[2b+1,2b+2^{r+1}-1]$, and
$$ p = \pi_T[a,a+2^r] \hspace{8.0ex} q = \pi_T[b,b+2^r], $$ 
$$ p' = \phi_M(p) \hspace{8.0ex} q' = \phi_M(q). $$
As in cases $\textbf{a.2}$ and $\textbf{b.2}$, $T[a,a+2^r-1] = T[b,b+2^r-1]$, so $p$ and $q$ have the same form.  If $p=q$ then $\phi(p)=\phi(q)$, by Corollary $\ref{CorTo_pisqiff}$, and $p'=M(\phi(p))=M(\phi(q))=q'$.  If $p \neq q$ then by case $\textbf{a.1}$, $p$ and $q$ are a complementary pair of type 1.  Therefore, by Proposition $\ref{ImageOfTypeK}$, $p' = \phi_M(p) = \phi_M(q) = q'$.

%------------------------
% b6: c = 2^r-1
%------------------------
$\vspace{0.5ex}$

$\textbf{Case b.6:}$  Suppose $c = 2^r-1$.  Thus $n = 2^r + 2^r-1 = 2^{r+1}-1$, and the subpermutations $p'$ and $q'$ will have even length.  There will be two subcases, these being when $p',q' \in Perm_{ev}(n+1)$ and when $p',q' \in Perm_{odd}(n+1)$.

$\textbf{Case b.6.i:}$  Suppose $p',q' \in Perm_{ev}(n+1)$.  There are numbers $a$ and $b$ so that $p' = \pi_T[2a,2a+2^{r+1}-1]$ and $q' = \pi_T[2b,2b+2^{r+1}-1]$, and
$$ p = \pi_T[a,a+2^r] \hspace{8.0ex} q = \pi_T[b,b+2^r], $$ 
$$ p' = \phi_L(p) \hspace{8.0ex} q' = \phi_L(q). $$
As in cases $\textbf{a.3}$ and $\textbf{b.3}$, $T[a,a+2^r-1] = T[b,b+2^r-1]$, so $p$ and $q$ have the same form.  If $p=q$ then $\phi(p)=\phi(q)$, by Corollary $\ref{CorTo_pisqiff}$, and $p'=L(\phi(p))=L(\phi(q))=q'$.  If $p \neq q$ then by case $\textbf{a.1}$, $p$ and $q$ are a complementary pair of type 1.  Therefore, by Proposition $\ref{ImageOfTypeK}$, $p' = \phi_L(p) = \phi_L(q) = q'$.

$\textbf{Case b.6.ii:}$  Suppose $p',q' \in Perm_{odd}(n+1)$.  There are numbers $a$ and $b$ so that $p' = \pi_T[2a+1,2a+2^{r+1}]$ and $q' = \pi_T[2b+1,2b+2^{r+1}]$, and
$$ p = \pi_T[a,a+2^r] \hspace{8.0ex} q = \pi_T[b,b+2^r], $$ 
$$ p' = \phi_R(p) \hspace{8.0ex} q' = \phi_R(q). $$
As in cases $\textbf{a.4}$ and $\textbf{b.4}$, $T[a,a+2^r-1] = T[b,b+2^r-1]$, so $p$ and $q$ have the same form.  If $p=q$ then $\phi(p)=\phi(q)$, by Corollary $\ref{CorTo_pisqiff}$, and $p'=R(\phi(p))=R(\phi(q))=q'$.  If $p \neq q$ then by case $\textbf{a.1}$, $p$ and $q$ are a complementary pair of type 1.  Therefore, by Proposition $\ref{ImageOfTypeK}$, $p' = \phi_R(p) = \phi_R(q) = q'$.

Therefore the lemma is true when $n=2^r+c$ with $0 \leq c < 2^r$, and therefore for all $n$.
$\qed$
\end{proof}
%--------------

Thus, only subpermutations of length $2^r+1$ can be a complementary pair of type 1, and we have the following corollary.

%--------------
\begin{corollary}
\label{BijectionForTheMaps}
If $n \neq 2^r$, for $r \geq 1$, then for any subpermutations $p = \pi_T[a,a+n]$ and $q = \pi_T[b,b+n]$
\begin{itemize}
\item[(a)] $\phi_L(p) = \phi_L(q)$ if and only if $p = q$.
\item[(b)] $\phi_R(p) = \phi_R(q)$ if and only if $p = q$.
\item[(c)] $\phi_M(p) = \phi_M(q)$ if and only if $p = q$.
\end{itemize}
\end{corollary}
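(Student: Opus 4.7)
The plan is to read the corollary off of Corollary \ref{WhenTheMapsFailToBeABijection} together with Proposition \ref{LengthOfAlphaForAGBForN}. Corollary \ref{WhenTheMapsFailToBeABijection} already characterises the failure of injectivity of $\phi_L$, $\phi_R$, and $\phi_M$: for distinct $p,q \in Perm(n+1)$, one has $\phi_L(p)=\phi_L(q)$, or $\phi_R(p)=\phi_R(q)$, or $\phi_M(p)=\phi_M(q)$, if and only if $p$ and $q$ form a complementary pair of type $1$. So it suffices to show that, under the hypothesis $n \neq 2^r$ for $r \geq 1$, no complementary pair of type $1$ of length $n+1$ exists.

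For $n > 4$, I would write $n = 2^r + c$ with $0 \leq c < 2^r$. Proposition \ref{LengthOfAlphaForAGBForN} then says that any two distinct length-$(n+1)$ subpermutations with the same form are a complementary pair of type $c+1$ (the other case of that proposition forces $p=q$). This type equals $1$ exactly when $c = 0$, i.e.\ exactly when $n = 2^r$. Under our hypothesis this case is excluded, so there are no type-$1$ pairs of length $n+1$, and hence each of $\phi_L$, $\phi_R$, $\phi_M$ is injective on $Perm(n+1)$ by Corollary \ref{WhenTheMapsFailToBeABijection}.

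For the remaining allowed values, namely $n \in \{1,3\}$, I would verify the statement directly from the low-order listings in Appendix \ref{SecTheSubperms}: for $n=1$ the only length-$2$ subpermutations are $12$ and $21$, which have different forms (by Claim \ref{PCClaim01}) and so cannot form a complementary pair at all; for $n=3$ the length-$4$ subpermutations can be read off from the appendix and no pair of them has the complementary structure $[\alpha_1\,\lambda_1\,\lambda_2\,\beta_1]$ versus $[\beta_1\,\lambda_1\,\lambda_2\,\alpha_1]$ with $\alpha_1 = \beta_1 \pm 1$.

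Since all of the heavy lifting has already been done in Sections \ref{TypeKandCompPairs} and \ref{SecTypeOnePairs}, there is no real obstacle here; the corollary is a clean bookkeeping consequence of the two results cited. The only subtlety to keep straight is that, by Proposition \ref{LengthOfAlphaForAGBForN}, the type of every nontrivial complementary pair is a function of the length alone, so ruling out the single length $n = 2^r$ is enough to rule out type $1$ pairs entirely.
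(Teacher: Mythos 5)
Your proposal is correct and follows essentially the same route as the paper's own proof: combine Corollary \ref{WhenTheMapsFailToBeABijection} (failure of injectivity of $\phi_L$, $\phi_R$, $\phi_M$ happens only at complementary pairs of type $1$) with Proposition \ref{LengthOfAlphaForAGBForN} (for $n=2^r+c$ with $c\neq 0$ no type-$1$ pair of length $n+1$ can occur). Your explicit check of the small values $n\in\{1,3\}$, which fall outside the range $n>4$ of Proposition \ref{LengthOfAlphaForAGBForN}, is a minor extra care the paper omits but does not change the argument.
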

%--------------
\begin{proof}
It should be clear in each case that if $p = q$ then 
$$\phi_L(p) = \phi_L(q) \hspace{5.0ex} \phi_R(p) = \phi_R(q) \hspace{5.0ex} \phi_M(p) = \phi_M(q).$$

%-------------------------
%    a. \phi_L
%-------------------------

Suppose $\phi_L(p) = \phi_L(q)$.  If $p \neq q$, by Corollary $\ref{WhenTheMapsFailToBeABijection}$ $p$ and $q$ are a complementary pair of type 1.  By Proposition $\ref{LengthOfAlphaForAGBForN}$, $p$ and $q$ are cannot be complementary pair of type 1, therefore $p=q$.

A similar argument will show if $\phi_R(p) = \phi_R(q)$ then $p=q$, and if $\phi_M(p) = \phi_M(q)$ then $p=q$.
%-------------------------
%    b. \phi_R
%-------------------------
%$\vspace{0.5ex}$
%
%$\textbf{(b)}$  Suppose $\phi_R(p) = \phi_R(q)$.  If $p \neq q$, by Corollary $\ref{WhenTheMapsFailToBeABijection}$ $p$ and $q$ are a complementary pair of type 1.  By Proposition $\ref{LengthOfAlphaForAGBForN}$, $p$ and $q$ are cannot be complementary pair of type 1, therefore $p=q$.
%
%-------------------------
%    c. \phi_M
%-------------------------
%$\vspace{0.5ex}$
%
%$\textbf{(c)}$Suppose $\phi_M(p) = \phi_M(q)$.  If $p \neq q$, by Corollary $\ref{WhenTheMapsFailToBeABijection}$ $p$ and $q$ are a complementary pair of type 1.  By Proposition $\ref{LengthOfAlphaForAGBForN}$, $p$ and $q$ are cannot be complementary pair of type 1, therefore $p=q$.
$\qed$
\end{proof}
%--------------

We now consider the number of factors $u$ of $T$ of length $2^r$ that have two subpermutations which form a complementary pair of type 1.

%--------------
\begin{lemma}
\label{NumberOfFactWithLenAlphaOne}
Let $n  = 2^r$ or $2^r+1$, with $r \geq 2$.  Then there are exactly $2^r$ factors $u$ of $T$ of length $n$ so that there exist subpermutations $p = \pi_T[a,a+n]$ and $q = \pi_T[b,b+n]$ with form $u$ and $p \neq q$.
\end{lemma}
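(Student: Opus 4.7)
The plan is to induct on $r \geq 2$. Let $F(n)$ denote the number of factors $u$ of $T$ of length $n$ that are the form of two distinct subpermutations of length $n+1$. By Corollary~\ref{AtMostTwoSubpermsWithSameForm} together with Theorem~\ref{SameFormIFFCompPair}, $F(n)$ equals the number of unordered complementary pairs of subpermutations at length $n+1$. By Proposition~\ref{LengthOfAlphaForAGBForN}, any such pair at form length $2^r$ is of type $1$, and any such pair at form length $2^r+1$ is of type $2$. The lemma thus reduces to showing $F(2^r)=F(2^r+1)=2^r$ for all $r \geq 2$, and the base case $r=2$ is read off directly from Appendix~\ref{SecTheSubperms}.

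For the inductive step, I first recall (as in Case~1 of the proof of Theorem~\ref{SameFormIFFCompPair}) that both members of a complementary pair must start at positions of the same parity, since otherwise the common form would force a factor $00$ or $11$ to occur at both an even and an odd position, contradicting the structure of $T$. So complementary pairs at each length split into those in $Perm_{ev}$ and those in $Perm_{odd}$, and I count each half by pulling back through the four maps of Section~\ref{ThueMorsePermutation}. Concretely, $\phi$ restricts to a bijection between type-$1$ pairs at length $2^{r-1}+1$ and type-$1$ pairs in $Perm_{ev}(2^r+1)$ (Proposition~\ref{ImageOfTypeK}(b) with $2k-1=1$ forcing $k=1$); $\phi_M$ restricts to a bijection between type-$2$ pairs at length $2^{r-1}+2$ and type-$1$ pairs in $Perm_{odd}(2^r+1)$ (Proposition~\ref{ImageOfTypeK}(e) with $2k-3=1$); and analogously $\phi_L$ and $\phi_R$ restrict to bijections between type-$2$ pairs at length $2^{r-1}+2$ and type-$2$ pairs in $Perm_{ev}(2^r+2)$ and $Perm_{odd}(2^r+2)$, respectively (Proposition~\ref{ImageOfTypeK}(c),(d) with $2k-2=2$). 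At the source side, Proposition~\ref{LengthOfAlphaForAGBForN} identifies the available pair types exactly, so the induction hypothesis $F(2^{r-1})=F(2^{r-1}+1)=2^{r-1}$ can be invoked directly. Summing over parity yields
\[ F(2^r) = F(2^{r-1}) + F(2^{r-1}+1) = 2^r, \qquad F(2^r+1) = 2\,F(2^{r-1}+1) = 2^r, \]
closing the induction.

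The hard part will be verifying \emph{pair-level} injectivity of $\phi_L$, $\phi_R$, and $\phi_M$ on type-$2$ sources. By Corollary~\ref{WhenTheMapsFailToBeABijection} these three maps collapse exactly the type-$1$ pairs, so a hypothetical collision between subpermutations drawn from two distinct type-$2$ pairs would manufacture a spurious type-$1$ pairing; but this contradicts the uniqueness of complementary partners guaranteed by Corollary~\ref{AtMostOneCompliment}. Once this is settled, surjectivity onto the parity classes (already established in Section~\ref{ThueMorsePermutation}) delivers the bijections, and the count closes.
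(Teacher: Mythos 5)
Your outline reproduces the paper's argument almost exactly: induction on $r$ with base case from Appendix \ref{SecTheSubperms}, the parity split of pairs, the forward transport of pairs through $\phi$, $\phi_M$ (odd target length) and $\phi_L$, $\phi_R$ (even target length) via Proposition \ref{ImageOfTypeK}, and the recursion $F(2^r)=F(2^{r-1})+F(2^{r-1}+1)$, $F(2^r+1)=2F(2^{r-1}+1)$, which is precisely the paper's bookkeeping with its sets $\Gamma,\Delta,\Gamma',\Delta'$. The injectivity issue you single out as the hard part is in fact already settled: the source subpermutations have length $2^{r-1}+2$, i.e.\ parameter $n=2^{r-1}+1$, which is not a power of $2$ for $r\geq 3$, so Corollary \ref{BijectionForTheMaps} applies directly (your argument via Corollary \ref{WhenTheMapsFailToBeABijection} and Corollary \ref{AtMostOneCompliment} is essentially its proof).

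The genuine gap is in the backward direction of your claimed pair-level bijections. Knowing that $\phi_M$ (say) maps $Perm(2^{r-1}+2)$ \emph{onto} $Perm_{odd}(2^r+1)$ and is injective does not yet show that every type-$1$ pair $\{p',q'\}$ in $Perm_{odd}(2^r+1)$ is the image of a complementary pair: a set-level bijection does not transport the relation ``same form'' backwards. You must show that if $\phi_M(p)$ and $\phi_M(q)$ have the same form, then $p$ and $q$ have the same form, so that Proposition \ref{LengthOfAlphaForAGBForN} and the induction hypothesis can be applied to $p,q$ at all; the analogous statement is needed for $\phi$, $\phi_L$, $\phi_R$. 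This is exactly where the paper does real work: equality of the image forms, e.g.\ $T[2a+1,2a+2^r]=T[2b+1,2b+2^r]$, is lifted through the block structure of $\mu_T$ (each block $T[2i,2i+1]\in\{01,10\}$ is determined by either of its letters) to $T[2a,2a+2^r+1]=T[2b,2b+2^r+1]$ and hence to equality of the source forms $T[a,a+2^{r-1}]=T[b,b+2^{r-1}]$ --- the computation carried out in cases a.1--a.4 of Proposition \ref{LengthOfAlphaForAGBForN}, which the paper invokes verbatim at this point of the lemma. Without this step your summation only yields the lower bounds $F(2^r)\geq 2^r$ and $F(2^r+1)\geq 2^r$, not the exact counts; with it, your proof coincides with the paper's.
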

%--------------
\begin{proof}
It can be readily verified by looking at the subpermutations in Appendix $\ref{SecTheSubperms}$ that the lemma is true for $r=2$.  So there are 4 factors $u$ of $T$ of length 4 with two distinct subpermutations of length 5 with form $u$, and there are 4 factors $v$ of $T$ of length 5 with two distinct subpermutations of length 6 with form $v$.  

Suppose $r \geq 2$ and that the lemma is true for $r$.  We now show the lemma is true for $r+1$.  Let $\Gamma$ be the set of factors of length $2^r$, $\abs{\Gamma} = 2^r$, so that for $u \in \Gamma$ there are subpermutations $p$ and $q$ with form $u$ so that $p \neq q$, hence, by Proposition $\ref{LengthOfAlphaForAGBForN}$, $p$ and $q$ are a complementary pair of type 1.  Let $\Gamma'$ be the set of factors of length $2^{r+1}$ so that if $u \in \Gamma'$ then there exist subpermutations $p$ and $q$ with form $u$ so that $p \neq q$.  Let $\Delta$ be the set of factors of length $2^r+1$, $\abs{\Delta} = 2^r$, so that for $v \in \Delta$ there are subpermutations $p$ and $q$ with form $v$ so that $p \neq q$, hence, by Proposition $\ref{LengthOfAlphaForAGBForN}$, $p$ and $q$ are a complementary pair of type 2.  Let $\Delta'$ be the set of factors of length $2^{r+1}+1$ so that if $v \in \Delta'$ then there exist subpermutations $p$ and $q$ with form $v$ so that $p \neq q$.

The sizes of $\Gamma'$ and $\Delta'$ will be considered in two cases.

%-----------------
%
%   \Gamma'
%
%-----------------

$\vspace{0.5ex}$

$\textbf{Case $\Gamma'$:}$  Any factor in $\Gamma'$ will either start in an even position or an odd position, call these sets of factors $\Gamma'_{ev}$ and $\Gamma'_{odd}$ and hence
$$ \Gamma' = \Gamma'_{ev} \cup \Gamma'_{odd} .$$
Since the factors are of length $2^{r+1} \geq 8$, for any factors $s \in \Gamma'_{ev}$ and $t \in \Gamma'_{odd}$, $s \neq t$, thus
$$ \Gamma'_{ev} \cap \Gamma'_{odd} = \emptyset.$$
There will be two subcases to establish the size of $\Gamma'$, first by showing the size of $\Gamma'_{ev}$ and then the size of $\Gamma'_{odd}$.

%---------------
% \Gamma'_{ev}
%---------------

$\textbf{Subcase $\Gamma'_{ev}$:}$  For $u \in \Gamma$ there are subpermutations $p$ and $q$ of $\pi_T$ of length $2^r+1$, so that $p \neq q$.  By Proposition $\ref{LengthOfAlphaForAGBForN}$, $p$ and $q$ are a complementary pair of type 1.  By Proposition $\ref{ImageOfTypeK}$ $\phi(p)$ and $\phi(q)$ are a complementary pair of type 1, so $\phi(p) \neq \phi(q)$ and they both have form $\mu_T(u)$.  Therefore for each $u \in \Gamma$, $\mu_T(u) \in \Gamma'_{ev}$.  Hence 
$$ \abs{\Gamma'_{ev}} \geq \abs{\Gamma}.$$

Suppose that $u' \in \Gamma'_{ev}$, so there are subpermutations $p' = \pi_T[2a, 2a+2^{r+1}]$ and $q' = \pi_T[2b, 2b+2^{r+1}]$ with form $u' = T[2a, 2a+2^{r+1}-1] = T[2b, 2b+2^{r+1}-1]$, so that $p' \neq q'$.  Hence there exist subpermutations $p$ and $q$ so that $\phi(p) = p'$ and  $\phi(q) = q'$.  As in case $\textbf{a.1}$ of Proposition $\ref{LengthOfAlphaForAGBForN}$, $p$ and $q$ are a complementary pair of type 1 with form $u$ where $\mu_T(u) = u'$.  Thus for each $u' \in \Gamma'_{ev}$, there is some $u \in \Gamma$ so that $\mu_T(u) = u'$.  Hence
$$ \abs{\Gamma'_{ev}} \leq \abs{\Gamma}.$$

Therefore $\abs{\Gamma'_{ev}} = \abs{\Gamma}$.

%---------------
% \Gamma'_{odd}
%---------------

$\textbf{Subcase $\Gamma'_{odd}$:}$  For $u \in \Delta$, $u = T[a,a+2^r]$ , there are subpermutations $p$ and $q$ of $\pi_T$ of length $2^r+2$, so that $p \neq q$.  By Proposition $\ref{LengthOfAlphaForAGBForN}$, $p$ and $q$ are a complementary pair of type 2.  By Proposition $\ref{ImageOfTypeK}$, $\phi(p)$ and $\phi(q)$ are a complementary pair of type 3 with form $\mu_T(u) = T[2a,2a+2^{r+1}+1]$ and $\phi_M(p)$ and $\phi_M(q)$ are a complementary pair of type 1, so $\phi_M(p) \neq \phi_M(q)$ and they both have form $T[2a+1,2a+2^{r+1}]$.  Therefore for each $T[a,a+2^r] \in \Delta$, $T[2a+1,2a+2^{r+1}] \in \Gamma'_{odd}$.  Hence
$$ \abs{\Gamma'_{odd}} \geq \abs{\Delta}.$$

Suppose that $u' \in \Gamma'_{odd}$, so there are subpermutations $p' = \pi_T[2a+1, 2a+2^{r+1}+1]$ and $q' = \pi_T[2b+1, 2b+2^{r+1}+1]$ with form $u' = T[2a+1, 2a+2^{r+1}] = T[2b+1, 2b+2^{r+1}]$, so that $p' \neq q'$.  Hence there exist subpermutations $p$ and $q$ so that $\phi_M(p) = p'$ and  $\phi_M(q) = q'$.  As in case $\textbf{a.2}$ of Proposition $\ref{LengthOfAlphaForAGBForN}$, $p$ and $q$ are a complementary pair of type 2 with form $T[a,a+2^r]$.  Thus for each $u' \in \Gamma'_{odd}$, there is some $T[a,a+2^r] \in \Delta$ so that $u' = T[2a+1, 2a+2^{r+1}]$.  Hence
$$ \abs{\Gamma'_{odd}} \leq \abs{\Delta}.$$

Therefore $\abs{\Gamma'_{odd}} = \abs{\Delta}$.

%--------------------------

Therefore
$$\abs{\Gamma'} =  \abs{\Gamma'_{ev}} +\abs{\Gamma'_{odd}} = \abs{\Gamma} + \abs{\Delta} = 2^r + 2^r = 2^{r+1}. $$

%-----------------
%
%   \Delta'
%
%-----------------

$\vspace{0.5ex}$

$\textbf{Case $\Delta'$:}$  Any factor in $\Delta'$ will either start in an even position or an odd position, call these sets of factors $\Delta'_{ev}$ and $\Delta'_{odd}$ and hence
$$ \Delta' = \Delta'_{ev} \cup \Delta'_{odd} .$$
Since the factors are of length $2^{r+1}+1 \geq 8$, for any factors $s \in \Delta'_{ev}$ and $t \in \Delta'_{odd}$, $s \neq t$, thus
$$ \Delta'_{ev} \cap \Delta'_{odd} = \emptyset.$$
There will be two subcases to establish the size of $\Delta'$, first by showing the size of $\Delta'_{ev}$ and then the size of $\Delta'_{odd}$.

%---------------
% \Delta'_{ev}
%---------------

$\textbf{Subcase $\Delta'_{ev}$:}$  For $u \in \Delta$, $u = T[a,a+2^r]$ , there are subpermutations $p$ and $q$ of $\pi_T$ of length $2^r+2$, so that $p \neq q$.  By Proposition $\ref{LengthOfAlphaForAGBForN}$, $p$ and $q$ are a complementary pair of type 2.  By Proposition $\ref{ImageOfTypeK}$, $\phi(p)$ and $\phi(q)$ are a complementary pair of type 3 with form $\mu_T(u) = T[2a,2a+2^{r+1}+1]$ and $\phi_L(p)$ and $\phi_L(q)$ are a complementary pair of type 2, so $\phi_L(p) \neq \phi_L(q)$ and they both have form $T[2a,2a+2^{r+1}]$.  Therefore for each $T[a,a+2^r] \in \Delta$, $T[2a,2a+2^{r+1}] \in \Delta'_{ev}$.  Hence
$$ \abs{\Delta'_{ev}} \geq \abs{\Delta}.$$

Suppose that $u' \in \Delta'_{ev}$, so there are subpermutations $p' = \pi_T[2a+1, 2a+2^{r+1}+2]$ and $q' = \pi_T[2b+1, 2b+2^{r+1}+2]$ with form $u' = T[2a+1, 2a+2^{r+1}+1] = T[2b+1, 2b+2^{r+1}+1]$, so that $p' \neq q'$.  Hence there exist subpermutations $p$ and $q$ so that $\phi_L(p) = p'$ and  $\phi_L(q) = q'$.  As in case $\textbf{a.3}$ of Proposition $\ref{LengthOfAlphaForAGBForN}$, $p$ and $q$ are a complementary pair of type 2 with form $u = T[a,a+2^r]$.  Thus for each $u' \in \Gamma'_{ev}$, there is some $T[a,a+2^r] \in \Delta$ so that $u' = T[2a+1, 2a+2^{r+1}+1]$.  Hence
$$ \abs{\Delta'_{ev}} \leq \abs{\Delta}.$$

Therefore $\abs{\Delta'_{ev}} = \abs{\Delta}$.

%---------------
% \Delta'_{odd}
%---------------

$\textbf{Subcase $\Delta'_{odd}$:}$  A symmetric argument to the argument used in Subcase $\Delta'_{ev}$ will show $\abs{\Delta'_{odd}} = \abs{\Delta}$.

Therefore
$$\abs{\Delta'} =  \abs{\Delta'_{ev}} +\abs{\Delta'_{odd}} = \abs{\Delta} + \abs{\Delta} = 2^r + 2^r = 2^{r+1}. $$
$\qed$
\end{proof}
%--------------

Now we know when there are complementary pairs of type 1, and how many pairs of type 1 there are in each case.  

%=========================================================================================================================
%=========================================================================================================================
\section{Permutation Complexity of $T$}
\label{FormulaForPermComp}

We are now ready to give a recursive definition for the permutation complexity of $T$.  To show this we consider when the maps $\phi$, $\phi_L$, $\phi_R$, and $\phi_M$ are bijective.  After the recursive definition is given, it will be shown that the recursive definition yields a formula for the permutation complexity.

%--------------
\begin{proposition}
\label{RecursivePermComp}
Let $n \in \N$.  When $2n+1 = 2^r-1$, for some $r \geq 3$:
$$ \tau_T(2n+1) = \tau_T(n+1) + \tau_T(n+2) - 2^{r-1}. $$
When $2n = 2^r$, for some $r \geq 3$:
$$ \tau_T(2n) = 2(\tau_T(n+1) - 2^{r-1}). $$
For all other $n \geq 3$:
\begin{align*}
\tau_T(2n+1) &= \tau_T(n+1) + \tau_T(n+2) \\
\tau_T(2n) &= 2(\tau_T(n+1)).
\end{align*}
\end{proposition}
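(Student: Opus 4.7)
The starting point is the disjoint decomposition $Perm(m) = Perm_{ev}(m) \cup Perm_{odd}(m)$ from Section \ref{ThueMorsePermutation}. For $m = 2n+1$, I would count each piece separately: the map $\phi$ restricts to a bijection $Perm(n+1) \to Perm_{ev}(2n+1)$, giving $|Perm_{ev}(2n+1)| = \tau_T(n+1)$ immediately, while $\phi_M : Perm(n+2) \to Perm_{odd}(2n+1)$ is only guaranteed to be surjective, so the task reduces to measuring its failure to be injective. For $m = 2n$, the same strategy applies twice using the surjections $\phi_L,\phi_R : Perm(n+1) \to Perm_{ev}(2n), Perm_{odd}(2n)$.

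\textbf{Counting the collisions.} By Corollary \ref{WhenTheMapsFailToBeABijection}, the only way two distinct subpermutations $p \neq q$ are identified by $\phi_L$, $\phi_R$, or $\phi_M$ is when they form a complementary pair of type $1$; and by Proposition \ref{LengthOfAlphaForAGBForN}, such pairs exist only among subpermutations whose length has the form $2^s + 1$ for some $s \geq 2$. This cleanly splits the argument into a non-exceptional case (no type $1$ pairs in the relevant domain, so the surjection in question is a bijection and one simply gets $\tau_T(n+1)$ or $\tau_T(n+2)$) and an exceptional case, in which Lemma \ref{NumberOfFactWithLenAlphaOne} furnishes the exact count: among subpermutations of length $2^s + 1$ there are exactly $2^s$ type $1$ complementary pairs, since each of the $2^s$ special forms carries two subpermutations and Corollary \ref{AtMostOneCompliment} rules out a third.

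\textbf{Matching the exceptional indices.} For the odd case, type $1$ pairs live in $Perm(n+2)$, which forces $n+2 = 2^s + 1$, equivalently $2n+1 = 2^{s+1} - 1$. Setting $r = s+1$ recovers the hypothesis $2n+1 = 2^r - 1$ with $r \geq 3$ (the condition $s \geq 2$ of Lemma \ref{NumberOfFactWithLenAlphaOne}), and the deleted count is $2^s = 2^{r-1}$; adding $|Perm_{ev}(2n+1)| = \tau_T(n+1)$ yields the stated formula. For the even case, type $1$ pairs live in $Perm(n+1)$, forcing $n = 2^s$, equivalently $2n = 2^{s+1}$; with $r = s+1$ we obtain $2n = 2^r$, $r \geq 3$. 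Since whether a pair in $Perm(n+1)$ is of type $1$ does not depend on which of $\phi_L$ or $\phi_R$ is applied, both maps identify the \emph{same} $2^s = 2^{r-1}$ complementary pairs, so $|Perm_{ev}(2n)| = |Perm_{odd}(2n)| = \tau_T(n+1) - 2^{r-1}$ and their sum is the displayed expression.

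\textbf{Main obstacle.} Given the preceding lemmas, no step is technically deep; the principal care is combinatorial bookkeeping. I expect the most delicate check to be confirming that $\phi_L$ and $\phi_R$ really do collapse the same complementary pairs (so that the two even-side terms agree) and that distinct type $1$ pairs in a domain produce distinct collisions under each map, which follows from the uniqueness of the complement in Corollary \ref{AtMostOneCompliment}. Once these are in place, the recursion is immediate from addition of the two halves.
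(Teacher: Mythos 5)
Your proposal is correct and follows essentially the same route as the paper: decompose $Perm(2n+1)$ and $Perm(2n)$ into even- and odd-starting subpermutations, use the bijectivity of $\phi$ and the surjectivity of $\phi_L$, $\phi_R$, $\phi_M$, and quantify the failure of injectivity via type~1 complementary pairs, which by Proposition \ref{LengthOfAlphaForAGBForN} occur only at lengths $2^s+1$ and are counted by Lemma \ref{NumberOfFactWithLenAlphaOne}. The only cosmetic difference is that the paper phrases the exceptional cases inside an induction on $r$, whereas you argue directly; the counting and the identification of the exceptional indices are identical.
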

%--------------
\begin{proof}
For any $n$, 
$$ \tau_T(n) = \abs{Perm(n)} = \abs{Perm_{ev}(n)} + \abs{Perm_{odd}(n)}. $$
This proof will be done in three cases.  The first is when $2n+1 = 2^r-1$ for some $r \geq 3$, the second is when $2n = 2^r$ for some $r \geq 3$, and the third for all other $n$.

%--------------
% 2n+1 = 2^r-1
%--------------

$\textbf{Case $2n+1 = 2^r-1$: }$  It can be readily verified by looking at the subpermutations in Appendix $\ref{SecTheSubperms}$ that the proposition is true for $r=3$.  Suppose $r \geq 3$ and the lemma is true for $r$.  We show that the lemma is true for $r+1$.  So $2n+1 = 2^{r+1}-1$, and 
$$ Perm(2n+1) = Perm_{ev}(2n+1) + Perm_{odd}(2n+1). $$
Since the map 
$$\phi: Perm(n+1) \rightarrow Perm_{ev}(2n+1)$$
is a bijection, the size of $Perm(n+1)$ is the same as the size of $Perm_{ev}(2n+1)$.  Therefore
$$ \abs{Perm_{ev}(2n+1)} = \abs{Perm(n+1)} = \tau_T(n+1). $$

Then the map
$$ \phi_M: Perm(n+2) \rightarrow Perm_{odd}(2n+1) $$
is a surjective map, so
$$ \abs{Perm_{odd}(2n+1)} \leq \abs{Perm(n+2)}, $$
but it is not injective because $n+2 = 2^r + 1$.  So there are $2^r$ factors $u$ of length $2^r$ with a complementary pair of type 1 by Proposition $\ref{LengthOfAlphaForAGBForN}$ and Lemma $\ref{NumberOfFactWithLenAlphaOne}$.  Thus there are exactly $2^{r}$ complementary pairs of type 1 in $Perm(n+2)$.  So $2^{r+1}$ subpermutations in $Perm(n+2)$ will be mapped to $2^r$ subpermutations in $Perm_{odd}(2n+1)$ under $\phi_M$.  The other $Perm(n+2) - 2^{r+1}$ subpermutations in $Perm(n+2)$ are pairwise distinct and not complementary pairs, and thus will be pairwise distinct under $\phi_M$.  Hence
$$ \abs{Perm_{odd}(2n+1)} = \left( \abs{Perm(n+2)} - 2^{r+1} \right) + 2^r = \tau_T(n+2) - 2^r. $$

Therefore
$$ \tau_T(n) = \tau_T(n+1) + \tau_T(n+2) - 2^r.  $$

%--------------
% 2n+1 = 2^r
%--------------
$\vspace{0.5ex}$

$\textbf{Case $2n+1 = 2^r$: }$  It can be readily verified by looking at the subpermutations in Appendix $\ref{SecTheSubperms}$ that the proposition is true for $r=3$.  Suppose $r \geq 3$ and the lemma is true for $r$, and we show that the lemma is true for $r+1$.  So $2n+1 = 2^{r+1}$, and 
$$ Perm(2n) = Perm_{ev}(2n) + Perm_{odd}(2n). $$
The map
$$ \phi_L: Perm(n+1) \rightarrow Perm_{ev}(2n) $$
is a surjective map, so
$$ \abs{Perm_{ev}(2n)} \leq \abs{Perm(n+1)}, $$
but it is not injective because $n+1 = 2^r + 1$.  So there are $2^r$ factors $u$ of length $2^r$ with a complementary pair of type 1 by Proposition $\ref{LengthOfAlphaForAGBForN}$ and Lemma $\ref{NumberOfFactWithLenAlphaOne}$.  Thus there are exactly $2^{r}$ complementary pairs of type 1 in $Perm(n+1)$.  So $2^{r+1}$ subpermutations in $Perm(n+1)$ will be mapped to $2^r$ subpermutations in $Perm_{ev}(2n)$ under $\phi_M$.  The other $Perm(n+1) - 2^{r+1}$ subpermutations in $Perm(n+1)$ are pairwise distinct and not complementary pairs, and thus will be pairwise distinct under $\phi_L$.  Hence
$$ \abs{Perm_{ev}(2n)} = \left( \abs{Perm(n+1)} - 2^{r+1} \right) + 2^r = \abs{Perm(n+1)} - 2^r. $$

The map
$$ \phi_R: Perm(n+1) \rightarrow Perm_{odd}(2n) $$
is a surjective map, so
$$ \abs{Perm_{odd}(2n)} \leq \abs{Perm(n+1)}, $$
but it is not injective because $n+1 = 2^r + 1$.  By a similar argument to above we can see that 
$$ \abs{Perm_{odd}(2n)} = \abs{Perm(n+1)} - 2^r. $$

Therefore
$$ \tau_T(n) = (\abs{Perm(n+1)} - 2^r) + (\abs{Perm(n+1)} - 2^r) = 2(\tau_T(n+1) - 2^r). $$

%--------------
% else
%--------------
$\vspace{0.5ex}$

$\textbf{Case $n \geq 3$: }$  It can be readily verified by looking at the subpermutations in Appendix $\ref{SecTheSubperms}$ that the proposition is true for $n=3$.  Suppose $n \geq 3$ and the lemma is true for $n$, and we show that the lemma is true for $n+1$.  Since $2(n+1)+1, 2(n+1) \notin \{2^r-1, 2^r | r \geq 2\}$ for any $r$, we have $n+2, n+3 \notin \{ 2^r+1 | r \geq 2 \}$.  So for $2(n+1)$ and $2(n+1) + 1$ we know that the maps
\begin{align*}
&\phi: Perm(n+2) \rightarrow Perm_{ev}(2(n+1)+1) \\
&\phi_L: Perm(n+2) \rightarrow Perm_{ev}(2(n+1)) \\
&\phi_R: Perm(n+2) \rightarrow Perm_{odd}(2(n+1)) \\
&\phi_M: Perm(n+3) \rightarrow Perm_{odd}(2(n+1)+1) 
\end{align*}
are all bijections.  Therefore:
\begin{align*}
&\abs{Perm_{ev}(2(n+1)+1)} = \abs{Perm(n+2)} = \tau_T(n+2) \\
&\abs{Perm_{ev}(2(n+1))} = \abs{Perm(n+2)} = \tau_T(n+2) \\
&\abs{Perm_{odd}(2(n+1))} = \abs{Perm(n+2)} = \tau_T(n+2) \\
&\abs{Perm_{odd}(2(n+1)+1)} = \abs{Perm(n+3)} = \tau_T(n+3).
\end{align*}
So:
\begin{align*}
&\tau_T(2(n+1)) = \abs{Perm_{ev}(2(n+1))} + \abs{Perm_{odd}(2(n+1))} = 2( \tau_T(n+2)) \\
&\tau_T(2(n+1)+1) = \abs{Perm_{ev}(2(n+1)+1)} + \abs{Perm_{odd}(2(n+1)+1)} = \tau_T(n+2) + \tau_T(n+3).
\end{align*}
$\qed$
\end{proof}
%--------------

%--------------
\begin{theorem}
\label{PermCompIsTheFormula}
For any $n \geq 6$, where $n = 2^a + b$ with $0 < b \leq 2^a$,
$$ \tau_T(n) = 2(2^{a+1}+b-2).$$
\end{theorem}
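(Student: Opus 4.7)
The plan is to prove the formula by strong induction on $n$, using Proposition~\ref{RecursivePermComp} as the engine. The base cases $n = 6, 7, 8, 9$ can be verified by direct inspection of Appendix~\ref{SecTheSubperms}: one checks $\tau_T(6)=16, \tau_T(7)=18, \tau_T(8)=20, \tau_T(9)=30$ against the formula with $(a,b) = (2,2), (2,3), (2,4), (3,1)$ respectively.

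For the inductive step, I would fix $n \geq 10$, assume the formula holds for every $m$ with $6 \leq m < n$, and write $n = 2^a + b$ with $0 < b \leq 2^a$ (forcing $a \geq 3$). I would then split into four cases matching the clauses of Proposition~\ref{RecursivePermComp}. In Case 1, $n = 2^r$, so $a = r-1$ and $b = 2^{r-1}$; I apply $\tau_T(n) = 2(\tau_T(2^{r-1}+1) - 2^{r-1})$ and use the induction hypothesis on $2^{r-1}+1$ (representation $a'=r-1, b'=1$) to reach the desired value. In Case 2, $n = 2^r - 1$, so $a = r-1$, $b = 2^{r-1}-1$; I apply $\tau_T(n) = \tau_T(2^{r-1}) + \tau_T(2^{r-1}+1) - 2^{r-1}$ and invoke the induction hypothesis on both arguments. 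In Case 3, $n = 2m$ is even but not a power of $2$, so $b$ is even with $b < 2^a$; then $m+1 = 2^{a-1} + (b/2+1)$ with $2 \leq b/2+1 \leq 2^{a-1}$, and the induction hypothesis together with $\tau_T(n) = 2\tau_T(m+1)$ yields the formula. In Case 4, $n = 2m+1$ is odd but not of the form $2^r - 1$, so $b$ is odd and $b \leq 2^a - 3$; then $m+1 = 2^{a-1} + (b+1)/2$ and $m+2 = 2^{a-1} + (b+3)/2$ both admit the required form with second coordinate in $(0, 2^{a-1}]$, and $\tau_T(n) = \tau_T(m+1) + \tau_T(m+2)$ combined with the induction hypothesis collapses the arithmetic to $2(2^{a+1}+b-2)$.

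The main obstacle is purely bookkeeping: one must verify in Cases 3 and 4 that the smaller arguments $m+1$ and $m+2$ really do admit the standard representation $2^{a-1} + b'$ with $0 < b' \leq 2^{a-1}$. The critical point is that the excluded boundary values ($b = 2^a$ in Case 3 and $b = 2^a - 1$ in Case 4) are precisely the values that would push $b'$ out of range, and those boundary values are exactly what Cases 1 and 2 absorb. The $-2^{r-1}$ correction term in Proposition~\ref{RecursivePermComp} is then exactly what is needed to compensate for the representation shift at the boundary, so each case reduces to a one-line arithmetic check.
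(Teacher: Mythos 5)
Your proposal is correct and follows essentially the same route as the paper: strong induction anchored on the small cases from Appendix~\ref{SecTheSubperms}, with Proposition~\ref{RecursivePermComp} supplying the recursion and the exceptional clauses (with their $-2^{r-1}$ corrections) handling exactly the boundary lengths $2^r$ and $2^r-1$. The only difference is organizational — you case-split on the parity and form of the single target $n$, while the paper treats the pair $2n+1$, $2n+2$ together — and the arithmetic checks you outline are the same ones carried out there.
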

%--------------
\begin{proof}
The proof will be done by induction on $n$.  The above formula can be readily verified by looking at the subpermutations listed in Appendix $\ref{SecTheSubperms}$ for $n \leq 9$.  Suppose the theorem is true for all values less than or equal to $2n$.

$\textbf{Case $2n+1 = 2^a-1$: }$ Suppose $2n+1 = 2^a-1$.  If $2n+1 = 2^a-1 = 2^{a-1}+2^{a-1} - 1$, then $n = 2^{a-1}-1$, so $n+1 = 2^{a-1} = 2^{a-2}+2^{a-2}$ and $n+2=2^{a-1}+1$.  Thus:
\begin{align*}
\tau_T(n+1) &= 2(2^{a-2+1}+2^{a-2}-2) = 2(2^{a-1}+2^{a-2}-2) = 2(3(2^{a-2})-2)\\
\tau_T(n+2) &= 2(2^{a-1+1}+1-2) = 2(2^a-1)
\end{align*}
From Proposition $\ref{RecursivePermComp}$:
\begin{align*}
\tau_T(2n+1) &= 2(3(2^{a-2})-2) + 2(2^a-1) - 2^{a-1} = 2(3(2^{a-2})-2 + 2^a-1 - 2^{a-2}) \\
 &= 2(2(2^{a-2}) + 2^a-3) = 2(2^a + (2^{a-1}-1) - 2)
\end{align*}

$\vspace{0.5ex}$

$\textbf{Case $2n+2 = 2(n+1) = 2^a$: }$  Suppose $2n+2 = 2(n+1) = 2^a = 2^{a-1}+2^{a-1}$:
\begin{align*}
\tau_T(2(n+1)) &= 2(2(2^a-1) - 2^{a-1}) = 2(2^{a+1} - 2^{a-1} - 2) = 2(3(2^{a-1}) - 2)\\
 &= 2(2(2^{a-1}) + 2^{a-1} - 2) = 2(2^a + 2^{a-1} - 2)
\end{align*}

$\vspace{0.5ex}$

$\textbf{Case Else: }$ Suppose $2n+1 = 2^a + b$, $2n+2 = 2(n+1) = 2^a + b+1$, and $0 < b < 2^a - 1$.  Since $2n+1 = 2^a + b$ is odd, $b$ is odd.  So $n = 2^{a-1}+\frac{b-1}{2}$, $n+1 = 2^{a-1}+\frac{b+1}{2}$, and $n+2 = 2^{a-1}+\frac{b+3}{2}$.  Thus:
\begin{align*}
\tau_T(n+1) &= 2(2^a + \frac{b+1}{2} -2)\\
\tau_T(n+2) &= 2(2^a + \frac{b+3}{2} -2).
\end{align*}
From Proposition $\ref{RecursivePermComp}$:
\begin{align*}
\tau_T(2n+1) &=  2(2^a + \frac{b+1}{2} -2) + 2(2^a + \frac{b+3}{2} -2)  = 2(2^a + 2^a + \frac{b+1}{2}+ \frac{b+3}{2} -2 -2)\\
 &=  2(2^{a+1} + \frac{2b+4}{2} -4) = 2(2^{a+1} + b -2) \\
 \\
\tau_T(2(n+1)) &= 2(2(2^a + \frac{b+3}{2} -2)) = 2(2^{a+1} + b+3 -4)\\
 &= 2(2^{a+1} + (b+1) -2).
\end{align*}

Therefore, for all $n \geq 6$, where $n = 2^a + b$ with $0 < b \leq 2^a$, $ \tau_T(n) = 2(2^{a+1}+b-2)$
$\qed$
\end{proof}
%--------------

%=========================================================================================================================
%=========================================================================================================================
\section{Conclusion}
\label{SecConclusion}

There seem to be some natural ways to continue this research.  For the binary doubling map $\delta$, defined as $\delta(0) =00$ and $\delta(1)=11$, it has been shown that $T$ and $\delta(T)$ have the same factor complexity ($\cite{AberBrle02}$).  One natural question is, do $T$ and $\delta(T)$ have the same permutation complexity?  The answer is no.  As can be seen in Appendix $\ref{SecTheSubperms}$, $\tau_T(5) = 14$ but $\tau_{\delta(T)}(5) = 16$.  With $T$, there are at most two distinct subpermutations that have the same, but with $\delta(T)$ there are cases where three subpermutations have the same form.  One open question is, what is the permutation complexity of $\delta(T)$?

This paper also investigates the action of the $\mu_T$ on the subpermutations of $\pi_T$.  Since $\mu_T$ is an order preserving map, we know that if there are distinct subpermutations $\pi_T[a,a+n]$ and $\pi_T[b,b+n]$ then $\pi_T[2a,2a+2n] \neq \pi_T[2b,2b+2n]$.  This seems to be true in general for binary words that are fixed points of morphisms by using a similar argument from Lemma $\ref{pISqIFFppISqp}$, but the converse is not true in general.   Another open question is to investigate properties of infinite permutations associated with aperiodic binary words that are fixed points of a morphism.  For such words, is there a way to define a mapping on the subpermutations of $\pi_\w$ similar to the map $\phi$ defined on the subpermutations of $\pi_T$?

These are only a couple of the open questions in the area of permutation complexity.

%-------------------------------------------------------------------------------------------------------
%-------------------------------------------------------------------------------------------------------
%  
\paragraph{Acknowledgements:} 
Steve Widmer thanks Luca Zamboni and Amy Glen for comments and suggestions that helped him to improve and clarify this paper.

%-------------------------------------------------------------------------------------------------------
%-------------------------------------------------------------------------------------------------------
%  
\appendix
\section{Subpermutations of $\pi_T$} 
\label{SecTheSubperms}

The subpermutations and their form for factors of length 1 through 8 are shown below.

$$ 0 : [1 2] \hspace{5.0ex} 1 : [2 1] $$
\begin{align*}
01 : [1 3 2] \hspace{2.0ex} [2 3 1] \hspace{4.0ex} 00 : [1 2 3]& \\
10 : [3 1 2] \hspace{2.0ex} [2 1 3] \hspace{4.0ex} 11 : [3 2 1]& 
\end{align*}

\begin{align*}
010 : [2 4 1 3] \hspace{2.0ex} [1 3 2 4] \hspace{5.0ex} 001 : [1 2 4 3] \hspace{5.0ex} 100 : [3 1 2 4]& \\
101 : [4 2 3 1] \hspace{2.0ex} [3 1 4 2] \hspace{5.0ex} 011 : [2 4 3 1] \hspace{5.0ex} 110 : [4 3 1 2]& 
\end{align*}

\begin{align*}
0011 : [2 3 5 4 1] \hspace{2.0ex} [1 3 5 4 2]& \hspace{5.0ex} 0010 : [1 2 4 3 5] \hspace{5.0ex}  1010 : [5 2 4 1 3]\\
0110 : [2 5 4 1 3] \hspace{2.0ex} [3 5 4 1 2]& \hspace{5.0ex} 0100 : [2 4 1 3 5] \hspace{5.0ex}  1011 : [4 2 5 3 1]\\
1001 : [4 1 2 5 3] \hspace{2.0ex} [3 1 2 5 4]& \hspace{5.0ex} 0101 : [1 4 2 5 3] \hspace{5.0ex}  1101 : [5 4 2 3 1]\\
1100 : [5 3 1 2 4] \hspace{2.0ex} [4 3 1 2 5]& 
\end{align*}

\begin{align*}
00110 : [2 4 6 5 1 3] \hspace{2.0ex} [1 3 6 5 2 4] \hspace{5.0ex} 00101 : [1 2 5 3 6 4] \hspace{5.0ex} 10010 : [4 1 2 5 3 6]& \\
01100 : [3 6 4 1 2 5] \hspace{2.0ex} [2 5 4 1 3 6] \hspace{5.0ex} 01001 : [2 5 1 3 6 4] \hspace{5.0ex} 10100 : [5 2 4 1 3 6]& \\
10011 : [5 2 3 6 4 1] \hspace{2.0ex} [4 1 3 6 5 2] \hspace{5.0ex} 01011 : [2 5 3 6 4 1] \hspace{5.0ex} 10110 : [5 2 6 4 1 3]& \\
11001 : [6 4 1 2 5 3] \hspace{2.0ex} [5 3 1 2 6 4] \hspace{5.0ex} 01101 : [3 6 5 2 4 1] \hspace{5.0ex} 11010 : [6 5 2 4 1 3]& 
\end{align*}

\begin{align*}
011001 : [3 7 5 1 2 6 4] \hspace{2.0ex} [2 6 4 1 3 7 5] \hspace{5.0ex} 001011 : [1 3 6 4 7 5 2] \hspace{5.0ex} 100101 : [4 1 2 6 3 7 5]& \\
100110 : [6 2 4 7 5 1 3] \hspace{2.0ex} [5 1 3 7 6 2 4] \hspace{5.0ex} 001100 : [2 4 7 5 1 3 6] \hspace{5.0ex} 101001 : [6 2 5 1 3 7 4]& \\
001101 : [2 4 7 6 3 5 1] \hspace{5.0ex} 101100 : [5 2 6 4 1 3 7]& \\
010010 : [2 5 1 3 6 4 7] \hspace{5.0ex} 101101 : [6 3 7 5 2 4 1]& \\
010011 : [3 6 2 4 7 5 1] \hspace{5.0ex} 110010 : [6 4 1 2 5 3 7]& \\
010110 : [2 6 3 7 5 1 4] \hspace{5.0ex} 110011 : [6 4 1 3 7 5 2]& \\
011010 : [4 7 6 2 5 1 3] \hspace{5.0ex} 110100 : [7 5 2 4 1 3 6]& 
\end{align*}

\begin{align*}
0010110 : [1 3 7 4 8 6 2 5] \hspace{5.0ex} 0101100 : [2 6 3 7 5 1 4 8] \hspace{5.0ex} 1001011 : [5 1 3 7 4 8 6 2] \hspace{5.0ex} 1011001 : [6 2 7 4 1 3 8 5]&\\
0011001 : [2 4 8 6 1 3 7 5] \hspace{5.0ex} 0101101 : [3 7 4 8 6 2 5 1] \hspace{5.0ex} 1001100 : [6 2 4 8 5 1 3 7] \hspace{5.0ex} 1011010 : [7 4 8 6 2 5 1 3]&\\
0011010 : [2 5 8 7 3 6 1 4] \hspace{5.0ex} 0110010 : [3 7 5 1 2 6 4 8] \hspace{5.0ex} 1001101 : [6 2 4 8 7 3 5 1] \hspace{5.0ex} 1100101 : [7 4 1 2 6 3 8 5]&\\
0100101 : [2 5 1 3 7 4 8 6] \hspace{5.0ex} 0110011 : [3 7 5 1 4 8 6 2] \hspace{5.0ex} 1010010 : [6 2 5 1 3 7 4 8] \hspace{5.0ex} 1100110 : [7 5 1 3 8 6 2 4]&\\
0100110 : [3 7 2 5 8 6 1 4] \hspace{5.0ex} 0110100 : [4 8 6 2 5 1 3 7] \hspace{5.0ex} 1010011 : [7 3 6 2 4 8 5 1] \hspace{5.0ex} 1101001 : [8 6 2 5 1 3 7 4]&
\end{align*}

\begin{align*}
00101101 : [2 4 8 5 9 7 3 6 1]  \hspace{2.0ex}  [1 4 8 5 9 7 3 6 2]& \hspace{5.0ex} 00101100 : [1 3 7 4 8 6 2 5 9] \hspace{5.0ex} 10011001 : [7 2 4 9 6 1 3 8 5] \\
01001011 : [3 6 1 4 8 5 9 7 2]  \hspace{2.0ex}  [2 6 1 4 8 5 9 7 3]& \hspace{5.0ex} 00110010 : [2 4 8 6 1 3 7 5 9] \hspace{5.0ex} 10011010 : [7 2 5 9 8 3 6 1 4] \\
01011010 : [4 8 5 9 7 2 6 1 3]  \hspace{2.0ex}  [3 8 5 9 7 2 6 1 4]& \hspace{5.0ex} 00110100 : [2 5 9 7 3 6 1 4 8] \hspace{5.0ex} 10100110 : [8 3 7 2 5 9 6 1 4] \\
01101001 : [4 9 7 2 6 1 3 8 5]  \hspace{2.0ex}  [5 9 7 2 6 1 3 8 4]& \hspace{5.0ex} 01001100 : [3 7 2 5 9 6 1 4 8] \hspace{5.0ex} 10110011 : [7 3 8 5 1 4 9 6 2] \\
10010110 : [6 1 3 8 4 9 7 2 5]  \hspace{2.0ex}  [5 1 3 8 4 9 7 2 6]& \hspace{5.0ex} 01011001 : [2 7 3 8 5 1 4 9 6] \hspace{5.0ex} 11001011 : [8 5 1 3 7 4 9 6 2] \\
10100101 : [7 2 5 1 3 8 4 9 6]  \hspace{2.0ex}  [6 2 5 1 3 8 4 9 7]& \hspace{5.0ex} 01100101 : [3 8 5 1 2 7 4 9 6] \hspace{5.0ex} 11001101 : [8 6 2 4 9 7 3 5 1] \\
10110100 : [8 4 9 6 2 5 1 3 7]  \hspace{2.0ex}  [7 4 9 6 2 5 1 3 8]& \hspace{5.0ex} 01100110 : [3 8 6 1 4 9 7 2 5] \hspace{5.0ex} 11010011 : [9 7 3 6 2 4 8 5 1] \\
11010010 : [9 6 2 5 1 3 7 4 8]  \hspace{2.0ex}  [8 6 2 5 1 3 7 4 9]&
\end{align*}

\bibliographystyle{plain}
\bibliography{mybib}

\end{document}